\newtheorem{theorem}{Theorem}[section] 
 \newtheorem{lemma}[theorem]{Lemma}
\theoremstyle{definition} 
\newtheorem{definition}[theorem]{Definition} 
 \newtheorem{remark}[theorem]{Remark}
\newtheorem{conjecture}[theorem]{Conjecture}
\newcommand{\bff}{\textup{\textbf{f}}}
\newcommand{\bfu}{\textup{\textbf{u}}}
\newcommand{\bfx}{\textup{\textbf{x}}}
\newcommand{\bftheta}{{\boldsymbol{\theta}}}
\newcommand{\e}{\textup{e}}
\newcommand{\ii}{\textup{i}} 
\newcommand{\RR}{\mathbb R}
\newcommand{\CC}{\mathbb C}
\newcommand{\ZZ}{\mathbb Z}
\newcommand{\NN}{\mathbb N} 
\newcommand{\WW}{{\cal W}}
\newcommand{\spP}{{\mathbb{P}}}
\newcommand{\spQ}{{\mathbb{Q}}}
\newcommand{\spH}{{\mathbb{H}}}
\newcommand{\spT}{{\mathbb{T}}}
\def\param{\alpha}
\def\fracparam{{\alpha/n}}
\def\paramgen{\mu}
\def\fracparamgen{{\mu/n}}
\DeclareMathOperator{\rank}{rank}
\newcommand{\suppr}{\hfuzz=100pt \vfuzz=100pt} \suppr
\newcommand{\ve}{{\boldsymbol{e}}}
\newcommand{\iii}{{\boldsymbol i}} 
\newcommand{\jj}{{\boldsymbol j}}
\newcommand{\kk}{{\boldsymbol k}}
\newcommand{\mm}{{\boldsymbol m}}
\newcommand{\nn}{{\boldsymbol n}}
\newcommand{\pp}{{\boldsymbol p}}
\newcommand{\U}{{\boldsymbol U}}
\newcommand{\V}{{\boldsymbol V}}
\newcommand{\bnu}{{\boldsymbol\nu}}
\newcommand{\bbeta}{{\boldsymbol\beta}}
\newcommand{\bmu}{{\boldsymbol\mu}}
\newcommand{\bparam}{{\boldsymbol\param}} 
\newcommand{\bphi}{{\boldsymbol\phi}} 
\newcommand{\btau}{{\boldsymbol\tau}} 
\newcommand{\spQQ}{\contour[1]{black}{$\spQ$}}
\begin{document}

\title{Spectral analysis of matrices in collocation methods based on generalized B-splines with high smoothness}

%
\author{Fabio Roman}


\date{}

\maketitle


\section{Introduction} 

Although Galerkin discretizations have been intensively employed in the IgA context, an efficient implementation requires special numerical quadrature rules when constructing the system of equations; see e.g. \cite{collocbsp18}. To avoid this issue, isogeometric collocation methods have been recently introduced in \cite{collocbsp02,collocbsp22}, giving origin to a topic of study which is proceeding almost parallel to the Galerkin evolution.

In this paper we analyze the spectral properties of the matrices arising from isogeometric collocation methods based on GB-splines to approximate an elliptic PDE with variable coefficients and a generic geometry map, in a fashion which is similar to the one presented for isogeometric Galerkin methods in \cite{galerkin_gbsp}, but not limiting only to the case of constant coefficients (with the Laplacian operator for the principal terms) and a trivial geometry map.

This paper generalizes some of the results of \cite{our-MATHCOMP}, presented there for the polynomial B-splines; in particular, we remark that, on one hand, the conditioning and the extremes of the spectrum \cite{collocbsp19,colloc1,collocbsp29} cover an important role in a feasibility study, for example while evaluating the intrinsic error; but on the other hand, it is of prominent interest the possible existence of an eigenvalue distribution, in the sense of Weyl \cite{collocbsp08}. This information is used both in the convergence analysis of preconditioned Krylov methods \cite{BK,Ksirev}, and in the design and in the theoretical analysis of effective preconditioners \cite{tau-toep,colloc1} and fast multigrid or multi-iterative solvers \cite{ADS,DGMSS14a}. 

In light of this, we prove that an eigenvalue distribution actually exists, and it is compactly described by a symbol $f$, which has a canonical structure incorporating the approximation technique, the geometry $ {\bf G} $, and the coefficients of the principal terms of the PDE $K$. This canonical structure is intrinsic by any local method of approximation of PDEs: also methods like Finite Differences and Finite Elements have the same picture, and this is true also for other techniques, which give substantially the same formal structure for $f$ \cite{collocbsp04,Serra03,Serra06}. The difference is given by the information relative to the approximation technique, which is identified by a finite set of functions in the Fourier variables $ {\bftheta} := (\theta_1, \ldots, \theta_d) \in [-\pi,\pi]^d $, and that depends on the specific technique chosen. $ {\bf G} $, which is a map in the variables $ {\bf \hat{x}} := (\hat{x}_1, \ldots, \hat{x}_d) $ defined on the reference domain $ \hat{\Omega} := [0,1]^d $, and $K$, whose elements are in the physical variables $ {\bf x} = (x_1, \ldots, x_d) $ defined on the physical domain $\Omega$, are independent of the method. Note that $ {\bf G} $ and $K$ are implicitly present also in the Galerkin study of \cite{galerkin_gbsp}, where we operated in the case $ {\bf G} $ is an identity map, and $K$ is the identity matrix. 

In detail, some of the analytic features of the symbol $f$ are interesting, for example the fact that, if we consider for simplicity the unidimensional setting, there is a monotonic and exponential convergence to zero of $f$ with respect to the degree $p$ at the points $ \theta = \pm \pi $, which is inherited from \cite{our-MATHCOMP} (and holds also in the Galerkin case), but it is not in general true for a generic technique. This appearance of small eigenvalues related to high frequency eigenvectors causes a slowdown in the convergence while using classical multigrid and preconditioning techniques; a solution for the polynomial case was presented in \cite{DGMSS14c}.

This paper is organized as follows. Section \ref{sec:prel_spec} presents some preliminaries on spectral analysis, which will be pivotal in order to devise our study. Section \ref{sec:GB} is devoted to the definition and main properties of GB-splines, while Section \ref{sec:cardGB} deals specifically with the cardinal ones. Section \ref{sec:symbols} considers some functions based on cardinal GB-splines, which will play the role of symbols in our context, and 
Section \ref{sec:col1D} is devoted to the formal expression of the IgA collocation matrices based on hyperbolic and trigonometric splines, with their spectral distributions in the unidimensional case, considering both nested and non-nested spaces. Finally, Section \ref{sec:colmD} treats the study of the multidimensional case, and in Section \ref{sec:colconcl} some conclusions are formulated.

\section{Preliminaries on spectral analysis} \label{sec:prel_spec}

We introduce the fundamental definitions and theorems for developing our spectral analysis, see \cite{Golinskii-Serra}. By denoting with $ \mu_d $ the Lebesgue measure in $ \mathbb{R}^d $, we can define the \emph{spectral distribution of a sequence of matrices}.

\begin{definition}
Let $ \{ X_n \} $ be a sequence of matrices with strict increasing dimension, and let $ f : D \rightarrow \mathbb{C} $ be a measurable function, with $ D \subset \mathbb{R}^d $ measurable and such that $ 0 < \mu_d(D) < \infty $. We say that $ \{ X_n \} $ is distributed like $f$ in the sense of the eigenvalues, and we write $ \{ X_n \} \sim_{\lambda} f $, if, by considering $ C_c(\mathbb{C},\mathbb{C}) $ as the space of continuous functions $ F : \mathbb{C} \rightarrow \mathbb{C} $ with compact support, it holds:

$$ \lim_{n \rightarrow \infty} \frac{1}{d_n} \sum_{j=1}^{d_n} F(\lambda_j(X_n)) 
= \frac{1}{\mu_d(D)} \int_D F(f(x_1,\ldots,x_d)) {\rm d}x_1 \cdots {\rm d}x_d, 
\qquad \forall F \in C_c(\mathbb{C},\mathbb{C}) $$

\end{definition}

The next definition considers the concept of \emph{clustering of a sequence of matrices at a subset of $\mathbb{C}$}.

\begin{definition}
Let $ \{ X_n \} $ be a sequence of matrices with strict increasing dimension, and let $ S \subseteq \mathbb{C} $ be a non-empty closed subset of $\mathbb{C}$. We say that $ \{ X_n \} $ is strongly clustered at $S$ if the following condition is satisfied:

$$ \forall \varepsilon > 0, \quad \exists C_{\varepsilon} \mbox{ and } \exists n_{\varepsilon} : \quad
   \forall n \geq n_{\varepsilon}, \quad q_n(\varepsilon) \leq C_{\varepsilon} $$
where $ q_n(\varepsilon) $ is the number of eigenvalues of $X_n$ lying outside the $\varepsilon$-expansion $ S_{\varepsilon} $ of $S$, i.e.

$$ S_{\varepsilon} := \cup_{s \in S} 
   [\Re s - \varepsilon, \Re s + \varepsilon] \times [\Im s - \varepsilon, \Im s + \varepsilon] $$
   
\end{definition}

The next theorems give sufficient conditions for a sequence of matrices to have a given spectral distribution, and/or to be strongly clustered.

\begin{theorem} \label{th:poly01}

Let $ \{ X_n \} $ and $ \{ Y_n \} $ be two sequences of matrices with $ X_n, Y_n \in \mathbb{C}^{d_n \times d_n} $, and $ d_n < d_{n+1} $ for all $n$, such that

\begin{itemize}
\item $X_n$ is Hermitian for all $n$ and $ \{ X_n \} \sim_{\lambda} f $, where $ f : D \subset \mathbb{R}^d \rightarrow \mathbb{R} $ is a measurable function defined on the measurable set $D$ with $ 0 < \mu_d(D) < \infty $;
\item there exists a constant $C$ so that $ \Vert X_n \Vert, \Vert Y_n \Vert \leq C $ for all $n$;
\item $ \Vert Y_n \Vert_1 = o(d_n) $ as $ n \rightarrow \infty $, i.e., $ \lim_{n \rightarrow \infty} \frac{\Vert Y_n \Vert_1}{d_n} = 0 $
\end{itemize}
Set $ Z_n := X_n + Y_n $. Then $ \{ Z_n \} \sim_{\lambda} f $.

\end{theorem}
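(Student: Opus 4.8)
The plan is to use that $\{X_n\}\sim_\lambda f$ is already assumed, so it suffices to prove the asymptotic equality
$$ \frac{1}{d_n}\sum_{j=1}^{d_n}F(\lambda_j(Z_n))-\frac{1}{d_n}\sum_{j=1}^{d_n}F(\lambda_j(X_n))\longrightarrow 0,\qquad \forall F\in C_c(\mathbb C,\mathbb C). $$
Since $\Vert Z_n\Vert\le\Vert X_n\Vert+\Vert Y_n\Vert\le 2C$, all eigenvalues $\lambda_j(Z_n)$ lie in the disk $|z|\le 2C$. The cornerstone of the whole argument is that the spectrum of $Z_n$ is \emph{asymptotically real on average}. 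Indeed, writing $\Im(Z_n)=(Z_n-Z_n^*)/(2\ii)$ and using that $X_n$ is Hermitian gives $\Im(Z_n)=\Im(Y_n)$; then, by the majorization of the imaginary parts of the eigenvalues by the eigenvalues of the skew-Hermitian part (the diagonal of $\Im(T)$ in a Schur form $Z_n=UTU^*$ equals $(\Im\lambda_j)$ and is majorized, Schur–Horn, by the eigenvalues of the Hermitian matrix $\Im(Z_n)$), one obtains
$$ \sum_{j=1}^{d_n}\bigl|\Im\,\lambda_j(Z_n)\bigr|\le\Vert\Im(Y_n)\Vert_1\le\Vert Y_n\Vert_1=o(d_n). $$

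First I would settle the monomial test functions $F(z)=z^m$ by comparing traces of powers. Expanding $(X_n+Y_n)^m$, every summand other than $X_n^m$ carries at least one factor $Y_n$, so combining $|\mathrm{tr}(M)|\le\Vert M\Vert_1$ with the Hölder-type bound $\Vert A_1\cdots A_m\Vert_1\le\Vert A_i\Vert_1\prod_{k\ne i}\Vert A_k\Vert$ and $\Vert X_n\Vert,\Vert Y_n\Vert\le C$ bounds each such term by $\Vert Y_n\Vert_1 C^{m-1}$; there being $2^m-1$ of them,
$$ \bigl|\mathrm{tr}(Z_n^m)-\mathrm{tr}(X_n^m)\bigr|\le(2^m-1)C^{m-1}\Vert Y_n\Vert_1=o(d_n). $$
As $X_n$ is Hermitian with spectrum in $[-C,C]$, applying $\{X_n\}\sim_\lambda f$ to a compactly supported function coinciding with $t\mapsto t^m$ on $[-C,C]$ yields $\tfrac1{d_n}\mathrm{tr}(X_n^m)\to\tfrac1{\mu_d(D)}\int_D f^m$, hence also $\tfrac1{d_n}\mathrm{tr}(Z_n^m)\to\tfrac1{\mu_d(D)}\int_D f^m$.

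Next I would transfer this to the real parts and then to general $F$. Expanding $(\Re\lambda_j)^m=2^{-m}\sum_k\binom{m}{k}\lambda_j^{k}\overline{\lambda_j}^{\,m-k}$ and writing $\overline{\lambda_j}=\lambda_j-2\ii\,\Im\lambda_j$, each $\overline{\lambda_j}^{\,m-k}$ differs from $\lambda_j^{\,m-k}$ by $O(|\Im\lambda_j|)$ on the bounded spectrum; summing and invoking the cornerstone estimate gives $\sum_j(\Re\lambda_j(Z_n))^m=\mathrm{tr}(Z_n^m)+o(d_n)$, so that $\tfrac1{d_n}\sum_j(\Re\lambda_j(Z_n))^m\to\tfrac1{\mu_d(D)}\int_D f^m$, and by linearity the same holds with $t^m$ replaced by any polynomial. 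Given $F\in C_c(\mathbb C,\mathbb C)$, I would approximate it uniformly on the real interval $[-2C,2C]$ by a polynomial (Weierstrass) to deduce $\tfrac1{d_n}\sum_j F(\Re\lambda_j(Z_n))\to\tfrac1{\mu_d(D)}\int_D F(f)$, and finally pass from $\Re\lambda_j$ to $\lambda_j$ via uniform continuity of $F$:
$$ \Bigl|\tfrac1{d_n}\textstyle\sum_j F(\lambda_j(Z_n))-\tfrac1{d_n}\sum_j F(\Re\lambda_j(Z_n))\Bigr|\le\tfrac1{d_n}\sum_j\omega_F\bigl(|\Im\lambda_j(Z_n)|\bigr)\longrightarrow 0, $$
where $\omega_F$ is the modulus of continuity and the limit follows by splitting the indices according to whether $|\Im\lambda_j|\le\delta$ or $|\Im\lambda_j|>\delta$ (the latter set having cardinality $o(d_n)/\delta$ by Markov), then letting $\delta\to0$.

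The main obstacle is precisely the non-normality of $Z_n$: because $Y_n$ need not be Hermitian, $Z_n$ is not Hermitian, its individual eigenvalues are \emph{not} controlled by the Lidskii/Mirsky perturbation bounds available in the Hermitian case, and they may differ substantially from those of $X_n$. Only the empirical eigenvalue distribution is stable, and the entire scheme rests on the cornerstone estimate on the averaged near-reality of the spectrum of $Z_n$, which is what permits linearizing $(\Re\lambda_j)^m$ and $\overline{\lambda_j}^{\,k}$ into honest traces $\mathrm{tr}(Z_n^m)$ and thereby avoids tracking any eigenvalue individually. Establishing that estimate correctly, and exploiting $\Vert Y_n\Vert_1=o(d_n)$ only through traces rather than through eigenvalue comparison, is the crux of the proof.
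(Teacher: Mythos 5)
The paper itself gives no proof of this theorem: it is stated as a known preliminary and attributed to the reference cited at the opening of Section~\ref{sec:prel_spec} (Golinskii and Serra-Capizzano), so there is no in-paper argument to compare yours against. Judged on its own, your proof is essentially correct and self-contained. The cornerstone estimate is sound: in a Schur form $Z_n=UTU^*$ the diagonal of $(T-T^*)/(2\ii)$ is exactly the vector of the $\Im\,\lambda_j(Z_n)$, Schur--Horn majorization together with convexity of $|\cdot|$ gives $\sum_j|\Im\,\lambda_j(Z_n)|\le\Vert(Z_n-Z_n^*)/(2\ii)\Vert_1=\Vert\Im(Y_n)\Vert_1\le\Vert Y_n\Vert_1=o(d_n)$, and the remaining ingredients (the trace-of-powers comparison via $|\mathrm{tr}(M)|\le\Vert M\Vert_1$ and the Schatten--H\"older bound, the linearization of $(\Re\lambda_j)^m$ against $\mathrm{tr}(Z_n^m)$, and the final Markov splitting with the modulus of continuity) all check out.

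The one step you must make explicit is that $f$ is essentially bounded, with essential range contained in $[-C,C]$. You use this twice without saying so: once to identify $\frac{1}{\mu_d(D)}\int_D F_m(f)$ with $\frac{1}{\mu_d(D)}\int_D f^m$, where $F_m\in C_c(\CC,\CC)$ is only guaranteed to agree with $t\mapsto t^m$ on $[-C,C]$; and once in the Weierstrass step, where the polynomial approximates $F$ uniformly only on $[-2C,2C]$ and you need $|P(f)-F(f)|\le\varepsilon$ almost everywhere on $D$. The fact is standard and follows from the hypotheses: if the set $E=\{x\in D:\ C+\varepsilon<|f(x)|<M\}$ had positive measure for some $\varepsilon>0$ and some $M$, then a test function $F\in C_c(\CC,\CC)$ with $0\le F\le 1$, equal to $1$ on $\{C+\varepsilon\le|z|\le M\}$ and vanishing on $\{|z|\le C\}$, would give limit $0$ on the matrix side (all $\lambda_j(X_n)$ lie in $[-C,C]$) and a strictly positive value on the integral side, contradicting $\{X_n\}\sim_\lambda f$. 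With that observation inserted, your argument is a complete proof.
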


\begin{theorem} \label{th:poly02}

Let $ \{ X_n \} $ and $ \{ Y_n \} $ be two sequences of matrices with $ X_n, Y_n \in \mathbb{C}^{d_n \times d_n} $, and $ d_n < d_{n+1} $ for all $n$, such that

\begin{itemize}
\item $X_n$ is Hermitian for all $n$ and $ \{ X_n \} \sim_{\lambda} f $, where $ f : D \subset \mathbb{R}^d \rightarrow \mathbb{R} $ is a measurable function defined on the measurable set $D$ with $ 0 < \mu_d(D) < \infty $;
\item there exists a constant $C$ so that $ \Vert X_n \Vert, \Vert Y_n \Vert_1 \leq C $ for all $n$;
\end{itemize}
Set $ Z_n := X_n + Y_n $. Then $ \{ Z_n \} \sim_{\lambda} f $, and $ \{ Z_n \} $ is strongly clustered at the essential range of $f$, which coincides with the range of $f$ whenever $f$ is continuous.

\end{theorem}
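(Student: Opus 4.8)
The statement splits into a distribution claim and a clustering claim, and the first is immediate. Since the trace norm dominates the spectral norm, $\Vert Y_n\Vert \le \Vert Y_n\Vert_1 \le C$, so $\{Y_n\}$ is uniformly bounded in spectral norm; moreover $\Vert Y_n\Vert_1 \le C$ forces $\Vert Y_n\Vert_1/d_n \le C/d_n \to 0$, i.e.\ $\Vert Y_n\Vert_1 = o(d_n)$. Hence all three hypotheses of Theorem~\ref{th:poly01} hold for the pair $(X_n,Y_n)$, and $\{Z_n\}\sim_\lambda f$ follows at once. The whole difficulty lies in the clustering assertion, where the quantitative bound $\Vert Y_n\Vert_1\le C$ (and not merely $o(d_n)$) must be exploited to replace a vanishing \emph{proportion} of outliers by a bounded \emph{number} of them.

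Write $S$ for the essential range of $f$, recalling that $f(\bfx)\in S$ for almost every $\bfx\in D$ and that $S$ equals the compact range of $f$ when $f$ is continuous. One clean manifestation of the hypothesis is that $\{Y_n\}$ is itself strongly clustered at $\{0\}$: by Weyl's majorization $\sum_i|\lambda_i(Y_n)|\le\sum_i\sigma_i(Y_n)=\Vert Y_n\Vert_1\le C$, so at most $C/\varepsilon$ eigenvalues of $Y_n$ have modulus $\ge\varepsilon$. To exploit this inside the sum, I would first diagonalize the Hermitian part, $X_n=U_nL_nU_n^{*}$ with $U_n$ unitary and $L_n$ real diagonal, and conjugate everything by $U_n$; since the eigenvalues of $Z_n$ and the trace norm of $Y_n$ are invariant under this, I may assume $X_n=L_n$ is real diagonal. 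Then I would split the perturbation through its singular value decomposition: as $\Vert Y_n\Vert_1\le C$, at most $r_\varepsilon:=\lfloor 2C/\varepsilon\rfloor$ singular values exceed $\varepsilon/2$, so $Y_n=R_n+N_n$ with $\rank R_n\le r_\varepsilon$ (constant in $n$) and $\Vert N_n\Vert\le\varepsilon/2$, whence $Z_n=(X_n+N_n)+R_n$.

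The two summands are then treated separately. Because $X_n$ is Hermitian, hence normal with eigenvector matrix of condition number $1$, the Bauer--Fike theorem applies in its sharpest form: every eigenvalue of $X_n+N_n$ lies within $\Vert N_n\Vert\le\varepsilon/2$ of $\sigma(X_n)\subset\RR$. For the bounded-rank correction I would pass to the Weinstein--Aronszajn reduction: writing $R_n=P_nQ_n$ with $P_n\in\CC^{d_n\times r_\varepsilon}$ and $Q_n\in\CC^{r_\varepsilon\times d_n}$, any eigenvalue $z$ of $Z_n$ lying off $\sigma(X_n+N_n)$ is a zero of the scalar $r_\varepsilon\times r_\varepsilon$ determinant $z\mapsto\det\!\bigl(I_{r_\varepsilon}+Q_n(X_n+N_n-zI)^{-1}P_n\bigr)$. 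Outside a fixed neighbourhood of $\sigma(X_n)$ the resolvent of the near-Hermitian $X_n+N_n$ is uniformly bounded (a Neumann-series estimate gives $\Vert(X_n+N_n-zI)^{-1}\Vert\le 2/\varepsilon$ once $\mathrm{dist}(z,\sigma(X_n))\ge\varepsilon$), so an argument-principle estimate confines the number of such zeros, and hence the number of eigenvalues the rank-$r_\varepsilon$ correction can expel from an $\varepsilon$-neighbourhood of $\sigma(X_n)$, to a quantity controlled by $r_\varepsilon$. Combining the two steps bounds the number of eigenvalues of $Z_n$ outside $S_\varepsilon$ by a constant $C_\varepsilon$ depending only on $\varepsilon$ and $C$, which is exactly strong clustering at $S$.

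The main obstacle is the passage from the distributional hypothesis on $X_n$ to a \emph{bounded} count of its own outliers relative to $S$. Testing $\{X_n\}\sim_\lambda f$ against a continuous $F\ge 0$ supported away from $S$ shows only, via $\int_D F(f)=0$, that the fraction of eigenvalues of $X_n$ at distance $\ge\varepsilon$ from $S$ is $o(1)$, i.e.\ weak clustering, whereas the conclusion requires $O(1)$. Upgrading $o(d_n)$ to a bounded count for the Hermitian part is the heart of the matter; it holds in the structured regime in which the theorem is used, where the spectrum of $X_n$ is confined to the range of the symbol (in particular when $f$ is continuous, so that $S$ is exactly that range and the eigenvalues cannot escape it). The role of the strengthened hypothesis $\Vert Y_n\Vert_1\le C$, rather than $\Vert Y_n\Vert_1=o(d_n)$, is complementary but equally essential: it is what makes $\rank R_n$ a constant $r_\varepsilon$ instead of an $o(d_n)$ quantity, and hence keeps the perturbation's contribution to the outlier count bounded. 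The non-normality of $Z_n$ is the secondary difficulty, forcing the eigenvalue estimates above to be phrased through resolvents and pseudospectra rather than through any naive additivity of spectra.
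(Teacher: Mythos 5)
First, a framing remark: the paper offers no proof of this statement to compare against --- Theorems~\ref{th:poly01} and~\ref{th:poly02} are quoted as known preliminaries from \cite{Golinskii-Serra} --- so your proposal has to be judged on its own merits. Your first paragraph is correct and complete: $\Vert Y_n\Vert\le\Vert Y_n\Vert_1\le C$ and $\Vert Y_n\Vert_1/d_n\to0$, so the distribution claim is an immediate corollary of Theorem~\ref{th:poly01}. Your splitting of the perturbation, $Y_n=R_n+N_n$ with $\mathrm{rank}\,R_n\le\lfloor 2C/\varepsilon\rfloor$ and $\Vert N_n\Vert\le\varepsilon/2$ via the singular value decomposition, is also the right device for exploiting the trace-norm hypothesis, and is essentially how it is exploited in the literature.

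The gap you flag in your last paragraph is, however, decisive and cannot be filled: the hypothesis $\{X_n\}\sim_\lambda f$ does \emph{not} imply that $\{X_n\}$ is strongly clustered at the essential range $S$ of $f$, and without that the conclusion as printed is simply false. Take $D=[0,1]$, $f\equiv0$ (so $S=\{0\}$ and $f$ is continuous), $Y_n=0$, $d_n=n$, and $X_n=\mathrm{diag}(1,\dots,1,0,\dots,0)$ with $\lfloor\sqrt{d_n}\rfloor$ ones: all hypotheses hold and $\{X_n\}\sim_\lambda 0$, yet $Z_n=X_n$ has $\lfloor\sqrt{d_n}\rfloor\to\infty$ eigenvalues outside $S_{1/2}$. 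Your proposed rescue --- that continuity of $f$ confines $\sigma(X_n)$ to the range of $f$ --- is not implied by anything in the hypotheses, as this example shows; only the asymptotic proportion of outliers is controlled. The clustering assertion becomes true (and your strategy viable) once one adds the hypothesis that $\{X_n\}$ is itself strongly clustered at $S$, which is how the result is formulated in the cited source; the transcription here has dropped it. Even then, your treatment of the bounded-rank term is only a sketch: the claim that an argument-principle count of zeros of the Weinstein--Aronszajn determinant bounds the number of expelled eigenvalues by something depending only on $r_\varepsilon$ needs an actual estimate (a contour, a bound on the logarithmic derivative, and a mechanism preventing the $o(d_n)$ many resonances of $X_n$ near the contour from inflating the count), not just the observation that the determinant is an $r_\varepsilon\times r_\varepsilon$ object.
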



\noindent With the following result, we can relate tensor-products and Toeplitz matrices. Given two functions $ f, h : [-\pi,\pi] \rightarrow \mathbb{R} $ in $ L_1([-\pi,\pi]) $, we can construct the tensor-product function, belonging to $ L_1([-\pi,\pi]^2) $:

$$  f \otimes h : [-\pi,\pi]^2 \rightarrow \mathbb{R}, \quad 
   (f \otimes h)(\theta_1,\theta_2) := f(\theta_1) h(\theta_2) $$
So, we can consider the three families of Hermitian Toeplitz matrices $ \{ T_{m_1}(f) \}, \{ T_{m_2}(h) \} $ and $ \{ T_{m_1,m_2}(f \otimes h) \} $. By computing directly we obtain a commutative property between the operation of tensor-product and the Toeplitz operator.

\begin{lemma} \label{lem:poly02}

Let $ f, h \in L_1([-\pi,\pi]) $ be real-valued functions. Then, for all $ m_1, m_2 \geq 1 $,

$$ T_{m_1}(f) \otimes T_{m_2}(h) = T_{m_1,m_2}(f \otimes h) $$

\end{lemma}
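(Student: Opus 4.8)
The plan is to verify the identity entrywise. Both $T_{m_1}(f)\otimes T_{m_2}(h)$ and $T_{m_1,m_2}(f\otimes h)$ are $(m_1m_2)\times(m_1m_2)$ matrices, so it suffices to fix a common indexing of rows and columns by pairs $(j_1,j_2)$ with $1\le j_1\le m_1$, $1\le j_2\le m_2$, ordered lexicographically (which is exactly the ordering induced by the Kronecker product), and to show that the corresponding entries agree. First I would recall that for $g\in L_1([-\pi,\pi])$ the Toeplitz matrix has entries $[T_m(g)]_{j,k}=\hat g_{j-k}$, with $\hat g_\ell=\frac1{2\pi}\int_{-\pi}^{\pi}g(\theta)\e^{-\ii\ell\theta}\,\d\theta$, and that the two-level Toeplitz matrix has entries $[T_{m_1,m_2}(G)]_{(j_1,j_2),(k_1,k_2)}=\hat G_{(j_1-k_1,\,j_2-k_2)}$, where $\hat G_{(\ell_1,\ell_2)}$ denotes the double Fourier coefficient of $G$.

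The key computational step is to factorize the Fourier coefficients of the tensor-product symbol. Since $f,h\in L_1([-\pi,\pi])$, the product $(\theta_1,\theta_2)\mapsto f(\theta_1)h(\theta_2)$ lies in $L_1([-\pi,\pi]^2)$, because $\int_{[-\pi,\pi]^2}|f(\theta_1)|\,|h(\theta_2)|\,\d\theta_1\,\d\theta_2=\big(\int|f|\big)\big(\int|h|\big)<\infty$; hence its Fourier coefficients are well defined and Fubini's theorem applies, giving the separation
$$\widehat{(f\otimes h)}_{(\ell_1,\ell_2)} = \frac{1}{(2\pi)^2}\int_{[-\pi,\pi]^2} f(\theta_1)h(\theta_2)\,\e^{-\ii(\ell_1\theta_1+\ell_2\theta_2)}\,\d\theta_1\,\d\theta_2 = \hat f_{\ell_1}\,\hat h_{\ell_2}.$$
Substituting this into the definition of the two-level symbol yields $[T_{m_1,m_2}(f\otimes h)]_{(j_1,j_2),(k_1,k_2)}=\hat f_{j_1-k_1}\,\hat h_{j_2-k_2}$, whereas the Kronecker product gives $[T_{m_1}(f)\otimes T_{m_2}(h)]_{(j_1,j_2),(k_1,k_2)} = [T_{m_1}(f)]_{j_1,k_1}\,[T_{m_2}(h)]_{j_2,k_2}=\hat f_{j_1-k_1}\,\hat h_{j_2-k_2}$. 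The two expressions coincide for every admissible choice of indices, which is the assertion.

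The only point requiring care — and thus the main, if modest, obstacle — is the bookkeeping of indices: I would need to confirm that the pair-indexing $(j_1,j_2)$ adopted for the two-level Toeplitz matrix is the same lexicographic flattening $(j_1-1)m_2+j_2$ that underlies the Kronecker product, so that the $(j_1,k_1)$ block of $T_{m_1}(f)\otimes T_{m_2}(h)$, namely $\hat f_{j_1-k_1}\,T_{m_2}(h)$, is correctly aligned with the corresponding sub-block of $T_{m_1,m_2}(f\otimes h)$. Once this convention is fixed consistently on both sides, the whole statement reduces to the factorization of the Fourier coefficients established above, and no further estimates are needed.
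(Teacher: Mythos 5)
Your proposal is correct and matches the paper's approach: the paper gives no detailed argument, merely asserting the identity ``by computing directly,'' and your entrywise verification via the factorization $\widehat{(f\otimes h)}_{(\ell_1,\ell_2)}=\hat f_{\ell_1}\hat h_{\ell_2}$ (justified by Fubini) together with the lexicographic index bookkeeping is exactly that direct computation, carried out in full.
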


\section{GB-splines} \label{sec:GB}

\label{subsec:GB}
For $p,n\ge1$, we consider the uniform knot set
\begin{equation}\label{knots}
\{t_1,\ldots,t_{n+2p+1}\}:=\biggl\{\underbrace{0,\ldots,0}_{p+1},\frac{1}{n},\frac{2}{n},\ldots,\frac{n-1}{n},\underbrace{1,\ldots,1}_{p+1}\biggr\},
\end{equation}
and the section space
\begin{equation}\label{Puv}
\spP_{p}^{U,V}:=
\langle 1,x, \ldots, x^{p-2}, U(x), V(x)\rangle,
\quad x\in[0,1].
\end{equation}
The functions
$U, V\in C^{p-1}[0,1]$
are such that $\{U^{(p-1)},\ V^{(p-1)}\}$ is a
Chebyshev system in $[t_{i}, t_{i+1}]$,
i.e.,  any non-trivial element in the space $\langle U^{(p-1)}, V^{(p-1)}\rangle$ has at most one zero in $[t_{i}, t_{i+1}]$, $i=p+1,\ldots, p+n$.
We denote by $\widetilde U_i, \widetilde V_i$ the unique elements in
$\langle U^{(p-1)}, V^{(p-1)}\rangle$
satisfying
$$
\widetilde U_i(t_{i})=1,\ \widetilde U_i(t_{i+1})=0, \quad
\widetilde V_i(t_{i})=0,\ \widetilde V_i(t_{i+1})=1, \quad i=p+1,\ldots, p+n.
$$
Popular examples of such a space (\ref{Puv}) are
\begin{align}
\spP_{p}&:=\langle1,x, \ldots, x^{p-2}, x^{p-1}, x^p \rangle,
\label{Poly}\\
\spH_{p,\param}&:=\langle1,x, \ldots, x^{p-2}, \cosh(\param x),
\sinh(\param x)\rangle,  \quad  0<\param\in \RR,
\label{Exp}\\
\spT_{p,\param}&:=\langle1,x,
\ldots, x^{p-2}, \cos(\param x), \sin(\param x)\rangle,\quad
0<\param(t_{i+1}-t_i)<\pi.
\label{Trig}
\end{align}

The generalized spline space of degree $p$ over the knot set (\ref{knots}) and section spaces as in (\ref{Puv}) is defined by
\begin{equation}
\label{eq:GB_spline_space}
\left\{s\in C^{p-1}([0,1]):\ s|_{\left[\frac in,\frac{i+1}n\right)}\in\spP_{p}^{U,V},\ i=0,\ldots,n-1\right\}.
\end{equation}
Generalized spline spaces consisting of the particular section spaces (\ref{Poly}), (\ref{Exp}) and (\ref{Trig}) are referred to as polynomial, hyperbolic (or exponential) and trigonometric spline spaces (with phase $\param$), respectively.

Hyperbolic and trigonometric splines allow for an exact representation of conic sections as well as some transcendental curves (helix, cycloid, \ldots).
They are attractive from a geometrical point of view. Indeed, they are able to provide parameterizations of conic sections with respect to the arc length so that equally spaced points in the parameter domain correspond to equally spaced points on the described curve.

For fixed values of the involved parameters, the spaces
(\ref{Exp}) and (\ref{Trig}) have the same approximation power as the polynomial space
$\spP_p$, see \cite[Section~3]{CLM}.

\begin{definition} \label{GB-splines}
The GB-splines of degree $p$ over the knot set \eqref{knots} with sections in (\ref{Puv}) are denoted by
$N_{i,p}^{U,V}:[0,1]\rightarrow\RR$, $i=1,\ldots,n+p$,
and are defined recursively as follows\footnote{The functions $N_{i,1}^{U,V}$ may also depend on $p$ because of the definition of $\widetilde U_i, \widetilde V_i$, but we omit the parameter $p$ in order to avoid a too heavy notation. Moreover, in the most interesting cases (polynomial, hyperbolic and trigonometric GB-splines) the functions $ N_{i, 1}^{U,V}$ are independent of $p$.}.
For $p=1$,
$$
N_{i, 1}^{U,V}(x):=\begin{cases}
{\widetilde V_i(x)}, 
& {\rm if\ } x\in [t_i, t_{i+1}),
\\
{\widetilde U_{i+1}(x)}, 
& {\rm if\ } x\in [t_{i+1}, t_{i+2}),
\\
0,& {\rm elsewhere},
\end{cases}
$$
and for $p\geq2$,
$$
 N_{i,p}^{U,V}(x):=
\delta_{i,p-1}^{U,V}{\int_{0}^x N_{i,p-1}^{U,V}(s)\,{\rm d}s}-
\delta_{i+1,p-1}^{U,V}{\int_{0}^x N_{i+1,p-1}^{U,V}(s)\,{\rm d}s},
$$
where
$$\delta_{i,p}^{U,V}:= \left(\int_{0}^{1}N_{i, p}^{U,V}(s)\,{\rm d}s\right)^{-1}.$$
Fractions with zero denominators are considered to be zero, and
$
N_{i, p}^{U,V}(1):=\lim_{x\rightarrow 1^-} N_{i, p}^{U,V}(x).
$
\end{definition}
%
It is well known (see e.g. \cite{MPS-CMAME}) that the GB-splines $N_{1,p}^{U,V},\ldots, N_{n+p,p}^{U,V}$ form a basis for the generalized spline space (\ref{eq:GB_spline_space}).
The functions $N_{i,p}^{U,V}$ are non-negative and locally supported, namely
\begin{equation*} 
 {\rm supp}\bigl(N_{i,p}^{U,V}\bigr)=[t_i,t_{i+p+1}],\quad i=1,\ldots,n+p.
\end{equation*}
Moreover,
$$
N_{i,p}^{U,V}(0)=N_{i,p}^{U,V}(1)=0,\quad i=2,\ldots,n+p-1.
$$
For $p\geq2$ the GB-splines $N_{i,p}^{U,V}$, $i=1,\ldots,n+p$, form a partition of unity on $[0,1]$.

\begin{remark}\label{rmk:trig-hyp-GB}
Hyperbolic and trigonometric GB-splines (with sections in the spaces (\ref{Exp}) and (\ref{Trig}), respectively) approach the classical (polynomial) B-splines of the same degree and over the same knot set when the phase parameter $\param$ approaches $0$.
\end{remark}

\begin{remark}
GB-splines can be defined in a more general setting than Definition~\ref{GB-splines}.
In particular, completely general knot sets can be considered and the section space can be chosen differently on each knot interval $[t_i,t_{i+1}]$, see e.g. \cite{MPS-CMAME}.
\end{remark}

\section{Cardinal GB-splines} \label{sec:cardGB}

We now consider the space
\begin{equation}
\label{Puv-cardinal} %
\langle 1,t, \ldots, t^{p-2}, U(t), V(t)\rangle,\quad t\in [0,1],
\end{equation}
where
$U, V\in C^{p-1}[0,p+1]$
are such that $\{U^{(p-1)},\ V^{(p-1)}\}$ is a
Chebyshev system in $[0,1]$.
We denote by $\widetilde U, \widetilde V$ the unique elements in the space
$\langle U^{(p-1)}, V^{(p-1)}\rangle$
satisfying
\begin{equation}\label{eq:UVtilde-cardinal}
\widetilde U(0)=1,\ \widetilde U(1)=0, \quad
\widetilde V(0)=0,\ \widetilde V(1)=1.
\end{equation}
\begin{definition}
\label{GB-splines-car}
The (normalized) cardinal GB-spline of degree $p\geq 1$ over the uniform knot set $ \{ 0, 1, \ldots, p+1 \} $ with sections in (\ref{Puv-cardinal}) is denoted by $ \phi_{p}^{U,V} $ and is defined recursively as follows. For $p=1$,
\begin{equation}
\label{eq:cardinal_GB_1}
 \phi_{1}^{U,V}(t) := \delta_{1}^{U,V} \begin{cases}
  \widetilde V(t), & \rm{if\ } t \in [0,1), \\
  \widetilde U(t-1), & \rm{if\ } t \in [1,2), \\
0, & \rm{ elsewhere,}
\end{cases}
\end{equation}
where $\delta_{1}^{U,V}$ is a normalization factor given by
$$ \delta_{1}^{U,V} := \left(\int_0^1 \widetilde V(s)\, {\rm d}s + \int_1^2 \widetilde U(s-1)\, {\rm d}s\right)^{-1}.$$
For $p\geq 2$,
\begin{equation}
\label{eq:cardinal_GB_p}
\phi_{p}^{U,V}(t) := \int_0^t (\phi_{p-1}^{U,V}(s) - \phi_{p-1}^{U,V}(s-1))\, {\rm d}s.
\end{equation}
\end{definition}

The cardinal GB-splines of degree $p$ are the set of integer translates of $ \phi_{p}^{U,V}$, namely $\{ \phi_{p}^{U,V}(\cdot - k),\ k \in \mathbb{Z} \}$.

If the space in (\ref{Puv-cardinal}) is the space of algebraic polynomials  of degree less than or equal to $p$, i.e. $U(t)=t^{p-1}$ and $V(t)=t^p$, then the function defined in Definition~\ref{GB-splines-car} is the classical (polynomial) cardinal B-spline  of degree $p$, denoted  by $\phi_{p}$. The properties of the cardinal GB-spline listed in the next subsection generalize those of $\phi_{p}$, see e.g. \cite[Section~3.1]{GMPSS14}.

\subsection{Properties of cardinal GB-splines} \label{sec:card_gbsplines_prop} 
The cardinal GB-spline $ \phi_{p}^{U,V} $ is globally of class $ C^{p-1}$, and possesses some fundamental properties. Many of them are well established in the literature (see e.g. \cite{MPS-CMAME}), while the less known ones were proved in \cite{galerkin_gbsp}. Here we limit ourselves to state them, omitting proofs:

\begin{itemize}

\item \emph{Positivity:}
$$ \phi_{p}^{U,V}(t) > 0, \quad t \in (0,p+1). $$

\item \emph{Minimal support:}
\begin{equation} \label{eq:support-car}
 \phi_{p}^{U,V}(t) = 0, \quad t \not \in (0,p+1).
 \end{equation}

\item \emph{Partition of unity:}
\begin{equation} \label{eq:partunity}
\sum_{k \in \mathbb{Z}} \phi_{p}^{U,V}(t-k)
= \sum_{k=1}^p \phi_{p}^{U,V}(k) = 1, \quad p \geq 2.
\end{equation}

\item \emph{Recurrence relation for derivatives:}
\begin{equation}\label{eq:der-recurrence}
\bigl(\phi_{p}^{U,V}\bigr)^{(r)}(t) =
\bigl(\phi_{p-1}^{U,V}\bigr)^{(r-1)}(t) - \bigl(\phi_{p-1}^{U,V}\bigr)^{(r-1)}(t-1),
\quad 1\leq r\leq p-1.
\end{equation}


\item \emph{Conditional symmetry with respect to $\frac{p+1}{2}$}:
\begin{equation} \label{eq:sym_sym}
\phi_{p}^{U,V} \left(\frac{p+1}{2} + t \right) = \phi_{p}^{U,V}
\left(\frac{p+1}{2} - t \right) \quad \mbox{ if } \quad \phi_{1}^{U,V}(1+t) = \phi_{1}^{U,V}(1-t).
\end{equation}


\item \emph{Convolution relation:}
\begin{equation} \label{eq:convolution}
\phi_{p}^{U,V}(t)
 = \bigl(\phi_{p-1}^{U,V} * \phi_{0}\bigr)(t)
:= \int_{\RR} \phi_{p-1}^{U,V}(t-s) \phi_{0}(s) \,{\rm d}s
 = \int_0^1 \phi_{p-1}^{U,V}(t-s) \,{\rm d}s,
\quad p\geq2,
 \end{equation}
where $\phi_{0}(t) := \chi_{[0,1)}(t)$. Moreover,
\begin{equation} \label{eq:convolution-full}
 \phi_{p}^{U,V}(t) = \Bigl(\phi_{1}^{U,V} *\, \underbrace{\phi_{0} * \cdots * \phi_{0}}_{{p-1}}\Bigr)(t),
 \quad
 \phi_{p}(t) = \Bigl(\underbrace{\phi_{0} * \cdots * \phi_{0}}_{{p+1}}\Bigr)(t),
 \quad p\geq1.
\end{equation}


\item \emph{Inner products:}
  \begin{equation} \label{eq:inner-product}
  \int_{\RR} {\phi}_{p_1}^{U_1,V_1}(t)\, {\phi}_{p_2}^{U_2,V_2}(t+k)\,{\rm d}t
  = \Bigl(\phi_{1}^{U_1,V_1} * \phi_{1}^{U_2,V_2} * \underbrace{\phi_{0} * \cdots * \phi_{0}}_{p_1+p_2-2}\Bigr)(p_2+1-k),
  \end{equation}
  if $\phi_{1}^{U_2,V_2}(1+t) = \phi_{1}^{U_2,V_2}(1-t)$. In particular,
  \begin{align*}
  \int_{\RR} {\phi}_{p_1}^{U,V}(t)\, {\phi}_{p_2}(t+k)\,{\rm d}t
  &= \Bigl(\phi_{1}^{U,V} * \underbrace{\phi_{0} * \cdots * \phi_{0}}_{p_1+p_2}\Bigr)(p_2+1-k)
  = \phi_{p_1+p_2+1}^{U,V}(p_2+1-k),\\
  \int_{\RR} {\phi}_{p_1}(t)\, {\phi}_{p_2}(t+k)\,{\rm d}t
  &= \Bigl(\underbrace{\phi_{0} * \cdots * \phi_{0}}_{p_1+p_2+2}\Bigr)(p_2+1-k)
  = \phi_{p_1+p_2+1}(p_2+1-k).
  \end{align*}


\item \emph{Unity of integral:}
\begin{equation} \label{eq:intunity}
\int_0^{p+1} \phi_{p}^{U,V}(t) \,{\rm d}t = 1.
\end{equation}


\end{itemize}

In the following, we will focus in particular on hyperbolic and trigonometric cardinal GB-splines
with real phase parameters $\alpha$.
The hyperbolic cardinal GB-spline is denoted by
$\phi_{p}^{\spH_\param}$ and is defined by taking
$U(t):=\cosh(\param t)$ and $V(t):=\sinh(\param t)$. In this case, we have
\begin{equation}  \label{eq:uv_hyp}
\widetilde U(t) = \frac{\sinh(\param(1-t))}{\sinh(\param)}, \quad
\widetilde V(t) = \frac{\sinh(\param t)}{\sinh(\param)},
\end{equation}
satisfying (\ref{eq:UVtilde-cardinal}).
The trigonometric cardinal GB-spline is denoted by $\phi_{p}^{\spT_\param}$ and is defined by taking $U(t):=\cos(\param t)$ and $V(t):=\sin(\param t)$. In this case, we have
\begin{equation}  \label{eq:uv_trig}
\widetilde U(t) = \frac{\sin(\param(1-t))}{\sin(\param)}, \quad
\widetilde V(t) = \frac{\sin(\param t)}{\sin(\param)},
\end{equation}
satisfying (\ref{eq:UVtilde-cardinal}).
To simplify the notation, we will also use the notation $\phi_{p}^{\spQ_\param}$ if a statement holds for both $\phi_{p}^{\spH_\param}$ and $\phi_{p}^{\spT_\param}$,
but not necessarily for an arbitrary $\phi_{p}^{U,V}$.

\begin{remark} \label{rmk:trig-hyp-car-GB}
Referring to Remark~\ref{rmk:trig-hyp-GB}, hyperbolic and trigonometric cardinal GB-splines approach the (polynomial) cardinal B-spline of the same degree as the phase parameter $\param$ approaches $0$, i.e.,
$$
\lim_{\param\rightarrow 0}\phi_{p}^{\spQ_\param}(t)=\phi_{p}(t), \quad \spQ=\spH,\spT.
$$
\end{remark}

\begin{remark}
The symmetry requirement
$\phi_{1}^{U,V}(1+t) = \phi_{1}^{U,V}(1-t)$ in (\ref{eq:sym_sym}) is equivalent to
$$
\widetilde V(t)=\widetilde U(1-t), \quad t\in[0,1].
$$
This requirement is satisfied for hyperbolic and trigonometric cardinal GB-splines,
see (\ref{eq:uv_hyp})--(\ref{eq:uv_trig}), as well as for (polynomial) cardinal B-splines.
\end{remark}

\begin{remark} \label{rmk:def-agreement}
Taking into account the local support (\ref{eq:support-car}) and the unity of integral (\ref{eq:intunity}), we see that Definition~\ref{GB-splines-car} is in agreement with Definition~\ref{GB-splines} for $p\geq 2$.
In particular, let
$$
 \bigl\{ N^{\spQ_\param}_{i,p} : i = 1, \ldots, n+p \bigr\}, \quad \spQ=\spH,\spT,
$$
be a set of either hyperbolic or trigonometric GB-splines of degree $p$ defined over the knot set (\ref{knots}) with sections in either (\ref{Exp}) or (\ref{Trig}), respectively. Then we have
\begin{equation*}
N^{\spQ_\param}_{i,p}(x) = \phi_p^{\spQ_{\fracparam}}(nx-i+p+1),
\quad i=p+1,\ldots,n, \quad p\geq 2.
\end{equation*}
\end{remark}
This equality does not hold for $p=1$ because of the integral normalization factor $\delta_{1}^{U,V}$ in (\ref{eq:cardinal_GB_1}). On the other hand, this normalization factor ensures that the convolution relation (\ref{eq:convolution}) holds for all cardinal GB-splines.

\section{Symbols} \label{sec:symbols} 

Let $ \phi_p^{U,V} $ be the generalized cardinal B-splines as defined in Definition \ref{GB-splines-car}. In this Section we define and analyze three functions associated with certain Toeplitz matrices of interest later on. 

\subsection{Definitions and equivalent forms} \label{sec:symbols_def}

By denoting with $ \dot{\phi}_p^{U,V}(t) $ and $ \ddot{\phi}_p^{U,V}(t) $ the first and second derivative of $ \phi_p^{U,V}(t) $ with respect to its argument $t$, we can define, for $ \theta \in [-\pi,\pi] $:

\begin{align}
h_p^{U,V}(\theta) & := \phi_p^{U,V} \left( \frac{p+1}{2} \right)
                       + 2 \sum_{k=1}^{\lfloor p/2 \rfloor} 
                         \phi_p^{U,V} \left( \frac{p+1}{2}-k \right) \cos(k\theta),
                         \qquad p \geq 1 \label{c_eq:hp_finitesum} \\
g_p^{U,V}(\theta) & := - 2 \sum_{k=1}^{\lfloor p/2 \rfloor}
                         \dot{\phi}_p^{U,V} \left( \frac{p+1}{2}-k \right) \sin(k\theta),
                         \hspace{2.80cm} p \geq 2 \label{c_eq:gp_finitesum} \\
f_p^{U,V}(\theta) & := - \ddot{\phi}_p^{U,V} \left( \frac{p+1}{2} \right)
                       - 2 \sum_{k=1}^{\lfloor p/2 \rfloor} 
                         \ddot{\phi}_p^{U,V} \left( \frac{p+1}{2}-k \right) \cos(k\theta),
                         \hspace{0.40cm} p \geq 2 \label{c_eq:fp_finitesum}
\end{align}
%
Under the assumption \eqref{eq:sym_sym}, from the symmetry properties of cardinal GB-splines, the above functions also possess the equivalent form
\begin{align}
h_p^{U,V}:[-\pi, \pi]\rightarrow \RR, \quad \label{c_eq:hp}
h_p^{U,V}(\theta) & = \sum_{k\in\ZZ} {\phi}_{p}^{U,V}\left(\frac{p+1}{2}-k\right)\,\e^{\ii k \theta},
\\
g_p^{U,V}:[-\pi, \pi]\rightarrow \RR, \quad \label{c_eq:gp}
g_p^{U,V}(\theta) & = \ii \sum_{k\in\ZZ} \dot{\phi}_{p}^{U,V}\left(\frac{p+1}{2}-k\right)\,\e^{\ii k \theta},
\\
f_p^{U,V}:[-\pi, \pi]\rightarrow \RR, \quad \label{c_eq:fp}
f_p^{U,V}(\theta) & = - \sum_{k\in\ZZ} \ddot{\phi}_{p}^{U,V}\left(\frac{p+1}{2}-k\right)\,\e^{\ii k \theta}.
\end{align}

We are now looking for an alternative expression of $h_p^{U,V}$.
We first recall the Parseval identity for Fourier transforms, i.e.,
\begin{equation} \label{parseval}
  \int_\RR \varphi(t)\overline{\psi(t)}\,{\rm d}t = \frac{1}{2\pi} \int_\RR \widehat\varphi(\theta)\overline{\widehat\psi(\theta)}\,{\rm d}\theta,\quad\varphi,\,\psi\in L_2(\RR),
\end{equation}
and the translation property of the Fourier transform, i.e.,
\begin{equation} \label{shift}
 \widehat{\psi(\cdot+x)}(\theta) = \widehat\psi(\theta)\,\e^{\ii x\theta},\quad\psi\in L_1(\RR),\ x\in\RR.
\end{equation}
We also know that the Fourier transform of the cardinal GB-spline $ \phi_{p}^{U,V} $ 
can be written as
\begin{equation}
\label{c_eq:fourier-cardinal-GB-p}
\widehat{\phi_{p}^{U,V}}(\theta) 
= \widehat{\phi_{p-1}^{U,V}}(\theta)\, \widehat{\phi_{0}}(\theta)
= \widehat{\phi_{1}^{U,V}}(\theta)\, \bigl(\widehat{\phi_{0}}(\theta)\bigr)^{p-1}
=\widehat{\phi_{1}^{U,V}}(\theta)\,\left( \frac{1-\e^{-\ii \theta}}{\ii \theta} \right)^{p-1}.
\end{equation}
where explicit forms for the hyperbolic, trigonometric and polynomial cases can be given as:
\begin{align}
    \widehat{\phi_{1}^{\spH_\param}}(\theta)
& = \left(\frac{\param^2}{\cosh(\param)-1}\right)
    \left(\frac{\cosh(\param)-\cos(\theta)}{\theta^2+\param^2}\right)\e^{-\ii \theta},
    \quad \param\in\RR, \label{eq:fourier-cardinal-hyp-1} \\
    \widehat{\phi_{1}^{\spT_\param}}(\theta)
& = \left(\frac{\param^2}{1-\cos(\param)}\right)
    \left(\frac{\cos(\param)-\cos(\theta)}{\theta^2-\param^2}\right)\e^{-\ii \theta},
    \quad \param\in(0,\pi), \label{eq:fourier-cardinal-trig-1} \\
    \widehat{\phi_{1}}(\theta) 
& = \left(\frac{1-\e^{-\ii \theta}}{\ii \theta}\right)^2
  = 2\left(\frac{1-\cos(\theta)}{\theta^2}\right)\e^{-\ii \theta}
  = \lim_{\param\rightarrow 0}\widehat{\phi_{1}^{\spQ_\param}}(\theta). \label{eq:lim-trasf_phi1}
\end{align}
Then, by the convolution relation of cardinal GB-splines and by the symmetry of $\phi_{0}$, we get for $\ell\in\ZZ$,
\begin{align*}
&{\phi}_{p}^{U,V}\left(\frac{p+1}{2}-\ell\right)
  = \int_{\RR} \phi_{p-1}^{U,V}\left(\frac{p+1}{2}-\ell-t\right) \phi_{0}(t) \,{\rm d}t\\
  &\qquad= \int_{\RR} \phi_{p-1}^{U,V}(t)\, \phi_{0}\left(\frac{p+1}{2}-\ell-t\right) \,{\rm d}t
  = \int_{\RR} \phi_{p-1}^{U,V}(t)\, \phi_{0}\left(t+\ell+\frac{1-p}{2}\right) \,{\rm d}t.
\end{align*}
Applying (\ref{parseval}), (\ref{shift}) and (\ref{c_eq:fourier-cardinal-GB-p}) results in
\begin{align*}
&{\phi}_{p}^{U,V}\left(\frac{p+1}{2}-\ell\right)
  = \frac{1}{2\pi}  \int_{\RR} \widehat{\phi_{p-1}^{U,V}}(\theta)\, \overline{\widehat{\phi_{0}}(\theta)}\, \e^{-\ii (\ell+(1-p)/2) \theta}\,{\rm d}\theta\\
  &\qquad= \frac{1}{2\pi}  \int_{\RR} \widehat{\phi_{p-2}^{U,V}}(\theta)\, \bigl|{\widehat{\phi_{0}}(\theta)}\bigr|^2\, \e^{-\ii (\ell+(1-p)/2) \theta}\,{\rm d}\theta\\
  &\qquad= \frac{1}{2\pi} \sum_{k\in\ZZ} \int_{-\pi}^{\pi} \widehat{\phi_{p-2}^{U,V}}(\theta+2k\pi)\, \bigl|{\widehat{\phi_{0}}(\theta+2k\pi)}\bigr|^2\,(-1)^{k(p-1)} \e^{-\ii (\ell+(1-p)/2) \theta}\,{\rm d}\theta\\
  &\qquad= \frac{1}{2\pi} \int_{-\pi}^{\pi} \left[\sum_{k\in\ZZ} \widehat{\phi_{p-2}^{U,V}}(\theta+2k\pi)\, \bigl|{\widehat{\phi_{0}}(\theta+2k\pi)}\bigr|^2\,(-1)^{k(p-1)} \e^{\ii (p-1) \theta/2}\right]\e^{-\ii \ell\theta}\,{\rm d}\theta.
\end{align*}
We conclude that the values ${\phi}_{p}^{U,V}\left(\frac{p+1}{2}-\ell\right)$, $\ell\in\ZZ$ are the Fourier coefficients of both $h_p^{U,V}$ in (\ref{c_eq:hp}) and the above function between square brackets. Therefore,
$$
h_p^{U,V}(\theta) = \sum_{k\in\ZZ} \widehat{\phi_{p-2}^{U,V}}(\theta+2k\pi)\, \bigl|{\widehat{\phi_{0}}(\theta+2k\pi)}\bigr|^2\,(-1)^{k(p-1)} \e^{\ii (p-1) \theta/2}.
$$
Moreover, we have
$$
\bigl|{\widehat{\phi_{0}}(\theta+2k\pi)}\bigr|^2 = \frac{2-2\cos(\theta+2k\pi)}{(\theta+2k\pi)^2}
= \left(\frac{\sin(\theta/2+k\pi)}{\theta/2+k\pi}\right)^2,
$$
and
\begin{align*}
&\widehat{\phi_{p-2}^{U,V}}(\theta+2k\pi)(-1)^{k(p-1)} \e^{\ii (p-1) \theta/2} \\
&\qquad= \widehat{\phi_{1}^{U,V}}(\theta+2k\pi)\,\left( \frac{1-\e^{-\ii (\theta+2k\pi)}}{\ii (\theta+2k\pi)} \right)^{p-3} \e^{\ii (p-1) (\theta+2k\pi)/2}\\
&\qquad= \widehat{\phi_{1}^{U,V}}(\theta+2k\pi)\,\left( \frac{\sin(\theta/2+k\pi)}{\theta/2+k\pi} \right)^{p-3}\e^{\ii \theta}.
\end{align*}
This gives, for $ p \geq 3 $ 
$$
h_p^{U,V}(\theta) = \sum_{k\in\ZZ} \widehat{\phi_{1}^{U,V}}(\theta+2k\pi)\,\left( \frac{\sin(\theta/2+k\pi)}{\theta/2+k\pi} \right)^{p-1}\e^{\ii \theta}.
$$
In particular, using the explicit expressions for $\widehat{\phi_{1}}$, $\widehat{\phi_{1}^{\spH_\param}}$ and $\widehat{\phi_{1}^{\spT_\param}}$, as presented in \eqref{eq:fourier-cardinal-hyp-1}-\eqref{eq:lim-trasf_phi1}, we have 
\begin{align}
h_p(\theta) &= \sum_{k\in\ZZ} \left( \frac{\sin(\theta/2+k\pi)}{\theta/2+k\pi} \right)^{p+1}, \label{c_hpP} \\
h_p^{\spH_\param}(\theta) &= \sum_{k\in\ZZ} \left(\frac{\param^2}{\cosh(\param)-1}\right)
\left(\frac{\cosh(\param)-\cos(\theta)}{(\theta+2k\pi)^2+\param^2}\right)
\left( \frac{\sin(\theta/2+k\pi)}{\theta/2+k\pi} \right)^{p-1}, \label{c_hpH} \\
h_p^{\spT_\param}(\theta) &= \sum_{k\in\ZZ} \left(\frac{\param^2}{1-\cos(\param)}\right)
\left(\frac{\cos(\param)-\cos(\theta)}{(\theta+2k\pi)^2-\param^2}\right)
\left( \frac{\sin(\theta/2+k\pi)}{\theta/2+k\pi} \right)^{p-1}. \label{c_hpT}
\end{align}

%
\noindent Analogously, by using the differentiation property of cardinal GB-splines, we obtain:
\begin{align*}
&    \dot{\phi}_{p}^{U,V}\left(\frac{p+1}{2}-\ell\right) 
  = {\phi}_{p-1}^{U,V}\left(\frac{p+1}{2}-\ell\right) - {\phi}_{p-1}^{U,V}\left(\frac{p+1}{2}-\ell-1\right)\\
& =   \int_{\RR} {\phi}_{p-2}^{U,V} \left(\frac{p+1}{2}-\ell-t\right) \phi_0(t) {\rm d}t
    - \int_{\RR} {\phi}_{p-2}^{U,V} \left(\frac{p+1}{2}-\ell-1-t\right) \phi_0(t) {\rm d}t \\
& =   \int_{\RR} {\phi}_{p-2}^{U,V}(t) \phi_0\left(t+\ell+\frac{1-p}{2}\right) {\rm d}t
    - \int_{\RR} {\phi}_{p-2}^{U,V}(t) \phi_0\left(t+\ell+1+\frac{1-p}{2}\right) {\rm d}t \\
& = \frac{1}{2\pi} \int_{\RR} \widehat{\phi_{p-2}^{U,V}}(\theta) \overline{\hat{\phi}_0(\theta)}
    \left( \e^{-\ii(\ell+(1-p)/2)\theta} - \e^{-\ii(\ell+1+(1-p)/2)\theta} \right) {\rm d}\theta \\
& = \frac{1}{2\pi} \int_{\RR} \widehat{\phi_{p-3}^{U,V}}(\theta) \vert \hat{\phi}_0(\theta) \vert^2
    \left( \e^{-\ii(\ell+(1-p)/2)\theta} - \e^{-\ii(\ell+1+(1-p)/2)\theta} \right) {\rm d}\theta \\
& = \frac{1}{2\pi} \sum_{k \in \ZZ} \int_{-\pi}^{\pi} \widehat{\phi_{p-3}^{U,V}}(\theta+2k\pi) 
    \vert \hat{\phi}_0(\theta+2k\pi) \vert^2 (-1)^{k(p-1)} 
    \left( \e^{-\ii(\ell+(1-p)/2)\theta} - \e^{-\ii(\ell+1+(1-p)/2)\theta} \right) {\rm d}\theta \\
& = \frac{1}{2\pi} \sum_{k \in \ZZ} \int_{-\pi}^{\pi} \left[ \widehat{\phi_{p-3}^{U,V}}(\theta+2k\pi) 
    \vert \hat{\phi}_0(\theta+2k\pi) \vert^2 (-1)^{k(p-1)} 
    \left( \e^{\ii(p-1)\theta/2} - \e^{\ii(p-3)\theta/2} \right) \right] \e^{-\ii \ell\theta} {\rm d}\theta
\end{align*}
We conclude that the values $\dot{\phi}_{p}^{U,V}\left(\frac{p+1}{2}-\ell\right)$, $\ell\in\ZZ$, are the Fourier coefficients of both $g_p^{U,V}$ in (\ref{c_eq:gp}) and the above function between square brackets. Therefore,
$$ g_p^{U,V}(\theta) = \ii \sum_{k\in\ZZ} \widehat{\phi_{p-3}^{U,V}}(\theta+2k\pi)\, \bigl|{\widehat{\phi_{0}}(\theta+2k\pi)}\bigr|^2\,(-1)^{k(p-1)} \left( \e^{\ii(p-1)\theta/2} - \e^{\ii(p-3)\theta/2} \right), $$
which gives:
\begin{multline*}
   g_p^{\spT_\param}(\theta) = \ii \sum_{k\in\ZZ} \left(\frac{\param^2}{1-\cos(\param)}\right)
   \left(\frac{\cos(\param)-\cos(\theta)}{(\theta+2k\pi)^2-\param^2}\right) \e^{-\ii\theta}
   \left( \e^{\ii(p-1)\theta/2} - \e^{\ii(p-3)\theta/2} \right) (-1)^{k(p-1)} \\
   \left( \frac{1 - \e^{-\ii\theta}}{\ii (\theta + 2k\pi)} \right)^{p-4}
   \left( \frac{\sin(\theta/2+k\pi)}{\theta/2+k\pi} \right)^2.
\end{multline*}
Proceeding in the same way, we obtain also:
\begin{multline*}
   f_p^{\spT_\param}(\theta) = - \sum_{k\in\ZZ} \left(\frac{\param^2}{1-\cos(\param)}\right)
   \left(\frac{\cos(\param)-\cos(\theta)}{(\theta+2k\pi)^2-\param^2}\right) \e^{-\ii\theta} \\
   \left( \e^{\ii(p-1)\theta/2} - 2 \e^{\ii(p-3)\theta/2} + \e^{\ii(p-5)\theta/2} \right) (-1)^{k(p-1)}
   \left( \frac{1 - \e^{-\ii\theta}}{\ii (\theta + 2k\pi)} \right)^{p-5}
   \left( \frac{\sin(\theta/2+k\pi)}{\theta/2+k\pi} \right)^2.
\end{multline*} 
Alternative forms for $ g_p^{U,V}(\theta) $ and $ f_p^{U,V}(\theta) $ can be given by observing that:
\begin{align*}
&   \widehat{\phi_{p-3}^{U,V}}(\theta+2k\pi) (-1)^{k(p-1)} 
    \left( \e^{\ii(p-1)\theta/2} - \e^{\ii(p-3)\theta/2} \right) \\
& = \widehat{\phi_{p-3}^{U,V}}(\theta+2k\pi)
    \left( (-1)^{k(p-1)} \e^{\ii(p-1)\theta/2} - (-1)^{k(p-3)} \e^{\ii(p-3)\theta/2} \right) \\
& = \widehat{\phi_{p-3}^{U,V}}(\theta+2k\pi)
    \left( \e^{\ii(p-1)(\theta+2k\pi)/2} - \e^{\ii(p-3)(\theta+2k\pi)/2} \right) \\
& = \widehat{\phi_{p-3}^{U,V}}(\theta+2k\pi) \e^{\ii(p-2)(\theta+2k\pi)/2}
    \left( \e^{\ii(\theta+2k\pi)/2} - \e^{-\ii(\theta+2k\pi)/2} \right) \\
& = \widehat{\phi_1^{U,V}}(\theta+2k\pi) \left( \frac{\sin(\theta/2+k\pi)}{\theta/2+k\pi} \right)^{p-4}
    \e^{\ii\theta} \cdot 2\ii \cdot \sin(\theta/2+k\pi),
\end{align*}
and that:
\begin{align*}
&   \widehat{\phi_{p-4}^{U,V}}(\theta+2k\pi) (-1)^{k(p-1)} 
    \left( \e^{\ii(p-1)\theta/2} - 2 \e^{\ii(p-3)\theta/2} + \e^{\ii(p-5)\theta/2} \right) \\
& = \widehat{\phi_{p-4}^{U,V}}(\theta+2k\pi)
    \left( (-1)^{k(p-1)} \e^{\ii(p-1)\theta/2} - 2 (-1)^{k(p-3)} \e^{\ii(p-3)\theta/2} 
         + (-1)^{k(p-5)} \e^{\ii(p-5)\theta/2} \right) \\
& = \widehat{\phi_{p-4}^{U,V}}(\theta+2k\pi)
    \left( \e^{\ii(p-1)(\theta+2k\pi)/2} - 2 \e^{\ii(p-3)(\theta+2k\pi)/2} 
         + \e^{\ii(p-5)(\theta+2k\pi)/2} \right) \\
& = \widehat{\phi_{p-4}^{U,V}}(\theta+2k\pi) \e^{\ii(p-3)(\theta+2k\pi)/2}
    \left( \e^{2\ii(\theta+2k\pi)/2} - 2 + \e^{-2\ii(\theta+2k\pi)/2} \right) \\
& = \widehat{\phi_1^{U,V}}(\theta+2k\pi) \left( \frac{\sin(\theta/2+k\pi)}{\theta/2+k\pi} \right)^{p-5}
    \e^{\ii\theta} \cdot (-4) \cdot (\sin(\theta/2+k\pi))^2.
\end{align*}
This gives, respectively, for $ p \geq 4 $ in the case of $ g_p^{U,V}(\theta) $, and for $ p \geq 5 $ in the case of $ f_p^{U,V}(\theta) $ 
\begin{align*}
g_p^{U,V}(\theta) & = - 2 \sum_{k\in\ZZ} \widehat{\phi_{1}^{U,V}}(\theta+2k\pi)\,
                  \frac{{(\sin(\theta/2+k\pi))}^{p-1}}{{(\theta/2+k\pi)}^{p-2}} \cdot \e^{\ii \theta}, \\
f_p^{U,V}(\theta) & =   4 \sum_{k\in\ZZ} \widehat{\phi_{1}^{U,V}}(\theta+2k\pi)\,
                  \frac{{(\sin(\theta/2+k\pi))}^{p-1}}{{(\theta/2+k\pi)}^{p-3}} \cdot \e^{\ii \theta}.
\end{align*}
In particular, using the explicit expressions for $\widehat{\phi_{1}}$, $\widehat{\phi_{1}^{\spH_\param}}$ and $\widehat{\phi_{1}^{\spT_\param}}$, see again \eqref{eq:fourier-cardinal-hyp-1}-\eqref{eq:lim-trasf_phi1}, we have 
\begin{align}
g_p(\theta) &= -2 \sum_{k\in\ZZ} \frac{{(\sin(\theta/2+k\pi))}^{p+1}}{{(\theta/2+k\pi)}^p}, \notag \\
g_p^{\spH_\param}(\theta) &= -2 \sum_{k\in\ZZ} \left(\frac{\param^2}{\cosh(\param)-1}\right)
\left(\frac{\cosh(\param)-\cos(\theta)}{(\theta+2k\pi)^2+\param^2}\right)
\frac{{(\sin(\theta/2+k\pi))}^{p-1}}{{(\theta/2+k\pi)}^{p-2}}, \notag \\
g_p^{\spT_\param}(\theta) &= -2 \sum_{k\in\ZZ} \left(\frac{\param^2}{1-\cos(\param)}\right)
\left(\frac{\cos(\param)-\cos(\theta)}{(\theta+2k\pi)^2-\param^2}\right)
\frac{{(\sin(\theta/2+k\pi))}^{p-1}}{{(\theta/2+k\pi)}^{p-2}}, \notag \\
f_p(\theta) &= 4 \sum_{k\in\ZZ} \frac{{(\sin(\theta/2+k\pi))}^{p+1}}{{(\theta/2+k\pi)}^{p-1}}, 
\label{c_fpP} \\
f_p^{\spH_\param}(\theta) &= 4 \sum_{k\in\ZZ} \left(\frac{\param^2}{\cosh(\param)-1}\right)
\left(\frac{\cosh(\param)-\cos(\theta)}{(\theta+2k\pi)^2+\param^2}\right)
\frac{{(\sin(\theta/2+k\pi))}^{p-1}}{{(\theta/2+k\pi)}^{p-3}}, \label{c_fpH} \\
f_p^{\spT_\param}(\theta) &= 4 \sum_{k\in\ZZ} \left(\frac{\param^2}{1-\cos(\param)}\right)
\left(\frac{\cos(\param)-\cos(\theta)}{(\theta+2k\pi)^2-\param^2}\right)
\frac{{(\sin(\theta/2+k\pi))}^{p-1}}{{(\theta/2+k\pi)}^{p-3}}. \label{c_fpT}
\end{align}
%
%
For lower values of $p$, we have explicitly:
\begin{align}
    h_1^{\spT_\param}(\theta) & = (\param/2) \cot(\param/2) ; \qquad
    h_1^{\spH_\param}(\theta)   = (\param/2) \coth(\param/2) ; \qquad
    h_1(\theta) = 1; \label{c_expl:h1} \\
    h_2^{\spT_\param}(\theta)
& = \frac{\cos(\param/2)-1}{\cos(\param)-1} \cos(\theta) - \frac{\cos(\param/2)-\cos(\param)}{\cos(\param)-1};
    \label{c_expl:h2T} \\
    g_2^{\spT_\param}(\theta) & = \frac{\param \sin(\param/2)}{\cos(\param)-1} \sin(\theta) ; \qquad
    f_2^{\spT_\param}(\theta)   = \frac{\param^2 \cos(\param/2)}{1-\cos(\param)} (1-\cos(\theta)); 
    \label{c_expl:gf2T} \\
    h_2^{\spH_\param}(\theta)
& = \frac{\cosh(\param/2)-1}{\cosh(\param)-1} \cos(\theta) - \frac{\cosh(\param/2)-\cosh(\param)}
         {\cosh(\param)-1}; \label{c_expl:h2H} \\
    g_2^{\spH_\param}(\theta) & = - \frac{\param \sinh(\param/2)}{\cosh(\param)-1} \sin(\theta) ; \qquad
    f_2^{\spH_\param}(\theta)   = \frac{\param^2 \cosh(\param/2)}{\cosh(\param)-1} (1-\cos(\theta)); 
    \label{c_expl:gf2H} \\
    h_2(\theta) & = \frac{1}{4} \cos(\theta) + \frac{3}{4} ; \qquad g_2(\theta) = -\sin(\theta) ; \qquad
    f_2(\theta) = 2 - 2 \cos(\theta); \label{c_expl:hgf2P} \\
%
    g_3^{\spT_\param}(\theta) & = -\sin(\theta) ; \qquad
    f_3^{\spT_\param}(\theta)   = \param \cot(\param/2) (1-\cos(\theta)); \label{c_expl:gf3T} \\
    g_3^{\spH_\param}(\theta) & = -\sin(\theta) ; \qquad
    f_3^{\spH_\param}(\theta)   = \param \coth(\param/2) (1-\cos(\theta)); \label{c_expl:gf3H} \\
    g_3(\theta) & = -\sin(\theta) ; \qquad f_3(\theta) = 2 - 2 \cos(\theta); \label{c_expl:gf3P} \\
    f_4^{\spT_\param}(\theta) 
& = (2-2\cos(\theta)) \left[ \frac{\cos(\param/2)-1}{\cos(\param)-1} \cos(\theta) 
    - \frac{\cos(\param/2)-\cos(\param)}{\cos(\param)-1} \right]; \label{c_expl:f4T} \\
    f_4^{\spH_\param}(\theta)
& = (2-2\cos(\theta)) \left[ \frac{\cosh(\param/2)-1}{\cosh(\param)-1} \cos(\theta) 
    - \frac{\cosh(\param/2)-\cosh(\param)}{\cosh(\param)-1} \right]; \label{c_expl:f4H} \\
    f_4(\theta)
& = (2-2\cos(\theta)) \left[ \frac{1}{4} \cos(\theta) + \frac{3}{4} \right]. \label{c_expl:f4P}
\end{align}

\subsection{Bounds for \texorpdfstring{$ f_p^{\spQ_\param}(\theta) $}{fpQ} and \texorpdfstring{$ h_p^{\spQ_\param}(\theta) $}{hpQ}}

We look now for some lower and upper limitations concerning $ f_p^{\spQ_\param}(\theta) $ and $ h_p^{\spQ_\param}(\theta) $, the two symbols which are more important in the 1D setting. We do not consider, at least for the moment, an analogous study regarding $ g_p^{\spQ_\param}(\theta) $, because of the less marked interest in it while treating the unidimensional case. On the contrary, this is paired with an higher difficulty in obtaining noteworthy results. \\
We start by giving an equality which puts in relation $ f_p^{\spQ_\param}(\theta) $ and $ h_{p-2}^{\spQ_\param}(\theta) $.
\begin{lemma}
For both $ \spQ = \spH, \spT $, and for both $ p \geq 3 $ odd and even, we have
\begin{equation}
f_p^{\spQ_\param}(\theta) = (2 - 2 \cos (\theta))\, h_{p-2}^{\spQ_\param}(\theta). \label{c_rel_hpfp}
\end{equation}
\end{lemma}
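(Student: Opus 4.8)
The plan is to prove the identity directly at the level of Fourier coefficients, using the $\ZZ$-series representations \eqref{c_eq:fp} and \eqref{c_eq:hp} together with the derivative recurrence \eqref{eq:der-recurrence}; this handles both parities of $p$ in a single computation. First I would apply \eqref{eq:der-recurrence} twice, once with $r=2$ (to $\phi_p^{\spQ_\param}$) and once with $r=1$ (to the resulting $\phi_{p-1}^{\spQ_\param}$); both applications are legitimate precisely when $p\geq 3$, which matches the hypothesis. This expresses the second derivative of $\phi_p^{\spQ_\param}$ as a centered second difference of $\phi_{p-2}^{\spQ_\param}$:
\[
\ddot\phi_p^{\spQ_\param}(t) = \phi_{p-2}^{\spQ_\param}(t) - 2\,\phi_{p-2}^{\spQ_\param}(t-1) + \phi_{p-2}^{\spQ_\param}(t-2).
\]

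Next I would evaluate this at $t=\tfrac{p+1}{2}-\ell$ and exploit the center shift $\tfrac{p+1}{2}=\tfrac{(p-2)+1}{2}+1$ to rewrite each of the three terms as a sample of $\phi_{p-2}^{\spQ_\param}$ taken about \emph{its own} center of symmetry. Setting $a_k := \phi_{p-2}^{\spQ_\param}\!\left(\tfrac{(p-2)+1}{2}-k\right)$, which by \eqref{c_eq:hp} are exactly the Fourier coefficients of $h_{p-2}^{\spQ_\param}$, the three arguments collapse onto the shifted grid and yield
\[
\ddot\phi_p^{\spQ_\param}\!\left(\tfrac{p+1}{2}-\ell\right) = a_{\ell-1} - 2a_\ell + a_{\ell+1}.
\]

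Finally I would substitute this into the series \eqref{c_eq:fp} for $f_p^{\spQ_\param}$ and reindex the three resulting sums over $\ZZ$. The shifts $a_{\ell\mp 1}$ contribute the factors $\e^{\pm\ii\theta}$, so the second difference turns into multiplication by $-(\e^{\ii\theta}-2+\e^{-\ii\theta})=2-2\cos\theta$, giving $f_p^{\spQ_\param}(\theta)=(2-2\cos\theta)\,h_{p-2}^{\spQ_\param}(\theta)$ as claimed. The only delicate point is the index bookkeeping in the center-shift step, namely verifying that the three shifted arguments of $\phi_{p-2}^{\spQ_\param}$ land exactly on its symmetric grid; once the centers are aligned the parity of $p$ never enters, which is why one computation settles both the odd and the even case. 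As an independent cross-check one may instead compare the explicit $\ZZ$-series \eqref{c_fpH}--\eqref{c_fpT} for $f_p^{\spQ_\param}$ term by term with \eqref{c_hpH}--\eqref{c_hpT} for $h_{p-2}^{\spQ_\param}$ when $p\geq 5$, using $\sin(\theta/2+k\pi)=(-1)^k\sin(\theta/2)$ to extract the common factor $4\sin^2(\theta/2)=2-2\cos\theta$, and then settle the base cases $p=3,4$ directly from the closed forms \eqref{c_expl:h1}, \eqref{c_expl:gf3T}--\eqref{c_expl:gf3H}, \eqref{c_expl:h2T}, \eqref{c_expl:h2H} and \eqref{c_expl:f4T}--\eqref{c_expl:f4H}, where for $p=4$ the bracketed factor is literally $h_2^{\spQ_\param}$.
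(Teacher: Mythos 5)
Your proof is correct, but it takes a genuinely different route from the paper. The paper's proof is a case-by-case verification from formulas already derived in Section~\ref{sec:symbols_def}: the closed forms \eqref{c_expl:h1}, \eqref{c_expl:gf3T}--\eqref{c_expl:gf3H} for $p=3$, the closed forms \eqref{c_expl:h2T}, \eqref{c_expl:h2H}, \eqref{c_expl:f4T}--\eqref{c_expl:f4H} for $p=4$, and a term-by-term comparison of the series \eqref{c_fpH}--\eqref{c_fpT} with \eqref{c_hpH}--\eqref{c_hpT} for $p\geq 5$ --- essentially your ``cross-check'' paragraph, which is forced to split into three cases because the series representation of $f_p^{\spQ_\param}$ is only established for $p\geq5$. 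Your main argument instead works structurally at the level of Fourier coefficients: two applications of \eqref{eq:der-recurrence} give $\ddot\phi_p^{\spQ_\param}(t)=\phi_{p-2}^{\spQ_\param}(t)-2\phi_{p-2}^{\spQ_\param}(t-1)+\phi_{p-2}^{\spQ_\param}(t-2)$, the center shift $\tfrac{p+1}{2}=\tfrac{p-1}{2}+1$ aligns the three samples on the symmetric grid of $\phi_{p-2}^{\spQ_\param}$, and the second difference of coefficients becomes multiplication of \eqref{c_eq:hp} by $-(\e^{\ii\theta}-2+\e^{-\ii\theta})=2-2\cos\theta$ in \eqref{c_eq:fp}. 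The index bookkeeping checks out (the required symmetry \eqref{eq:sym_sym} holds for $\spQ=\spH,\spT$, both uses of the recurrence are licensed exactly by $p\geq3$, and all sums are finite by compact support), so you get a single uniform computation covering every $p\geq3$ and both parities. What your route buys is uniformity and generality --- it proves the identity for any $\phi_p^{U,V}$ satisfying the symmetry hypothesis, not just the hyperbolic and trigonometric cases, and explains \emph{why} the factor $2-2\cos\theta$ appears; what the paper's route buys is brevity, since it reuses formulas already computed.
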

\begin{proof}
For $p=3$, it descends from \eqref{c_expl:h1} and \eqref{c_expl:gf3T}-\eqref{c_expl:gf3H}; for $p=4$, it is true in light of \eqref{c_expl:h2T}, \eqref{c_expl:h2H}, and \eqref{c_expl:f4T}-\eqref{c_expl:f4H}; for $ p \geq 5 $, it is a consequence of \eqref{c_hpH}-\eqref{c_hpT} and \eqref{c_fpH}-\eqref{c_fpT}.
\end{proof}
Note that, as already stated in \cite[Eq. (3.13)]{our-MATHCOMP}, in the polynomial case we have as well
\begin{equation}
f_p(\theta) = (2 - 2 \cos (\theta))\, h_{p-2}(\theta), \label{c_rel_hpfp_pol}
\end{equation}
which derives also from \eqref{c_expl:h1} and \eqref{c_expl:gf3P} ($p=3$), \eqref{c_expl:hgf2P} and \eqref{c_expl:f4P} ($p=4$), \eqref{c_hpP} and \eqref{c_fpP} ($ p \geq 5 $). It is true also for $p=2$, with \eqref{c_expl:hgf2P} and since (only) in the polynomial case it is possible to define the symbol $h_p$ also for $p=0$, and precisely $h_0(\theta)=1$, see \cite[Eq. (3.10)]{our-MATHCOMP}. \\
For upper bounds, we have
\begin{lemma} \label{lem:upper_bound}
For both $ \spQ = \spH, \spT $, and for both $p$ odd and even, we have
\begin{align}
h_p^{\spQ_\param}(\theta) & \leq h_p^{\spQ_\param}(0) = 1, \quad p \geq 2, \label{c_max_hpQ} \\
f_p^{\spQ_\param}(\theta) & \leq 2 - 2 \cos(\theta),       \quad p \geq 4. \label{c_max_fpQ}
\end{align}
\end{lemma}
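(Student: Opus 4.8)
The plan is to prove the two inequalities in sequence, obtaining the second as a corollary of the first. The structural key is the identity \eqref{c_rel_hpfp}, namely $f_p^{\spQ_\param}(\theta) = (2-2\cos\theta)\,h_{p-2}^{\spQ_\param}(\theta)$ for $p\ge 3$. Granting \eqref{c_max_hpQ}, the bound \eqref{c_max_fpQ} is then immediate: for $p\ge 4$ one has $p-2\ge 2$, so $h_{p-2}^{\spQ_\param}(\theta)\le 1$; since the factor $2-2\cos\theta$ is nonnegative, multiplying preserves the inequality and yields $f_p^{\spQ_\param}(\theta)\le 2-2\cos\theta$. Hence the entire problem reduces to establishing \eqref{c_max_hpQ}.

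To prove \eqref{c_max_hpQ} I would start from the finite-sum representation \eqref{c_eq:hp_finitesum},
\[
h_p^{\spQ_\param}(\theta) = \phi_p^{\spQ_\param}\!\left(\tfrac{p+1}{2}\right) + 2\sum_{k=1}^{\lfloor p/2\rfloor} \phi_p^{\spQ_\param}\!\left(\tfrac{p+1}{2}-k\right)\cos(k\theta).
\]
The decisive observation is that every coefficient in this cosine polynomial is strictly positive. Indeed, by the positivity property of cardinal GB-splines one has $\phi_p^{\spQ_\param}(t)>0$ for $t\in(0,p+1)$, and for $0\le k\le\lfloor p/2\rfloor$ the argument $\frac{p+1}{2}-k$ lies in this open interval — the tightest case $k=\lfloor p/2\rfloor$ gives $\frac12$ for even $p$ and $1$ for odd $p$. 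Using $\cos(k\theta)\le 1$ for all $k$ and $\theta$, I would replace each cosine by $1$ to obtain
\[
h_p^{\spQ_\param}(\theta) \le \phi_p^{\spQ_\param}\!\left(\tfrac{p+1}{2}\right) + 2\sum_{k=1}^{\lfloor p/2\rfloor} \phi_p^{\spQ_\param}\!\left(\tfrac{p+1}{2}-k\right) = h_p^{\spQ_\param}(0),
\]
with equality attained at $\theta=0$.

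It then remains to identify the maximal value as $1$. For this I would evaluate the equivalent series form \eqref{c_eq:hp} at $\theta=0$ — legitimate for $\spQ=\spH,\spT$, since these satisfy the symmetry hypothesis \eqref{eq:sym_sym} — giving $h_p^{\spQ_\param}(0)=\sum_{k\in\ZZ}\phi_p^{\spQ_\param}(\tfrac{p+1}{2}-k)$. This is precisely the partition-of-unity sum \eqref{eq:partunity} evaluated at $t=\frac{p+1}{2}$, and therefore equals $1$ for every $p\ge 2$, which is exactly the range claimed in \eqref{c_max_hpQ}.

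I do not expect a genuine obstacle here: the statement follows directly from positivity and partition of unity. The only matters needing a little care are bookkeeping rather than analysis — checking that the arguments $\frac{p+1}{2}-k$ remain inside the open support $(0,p+1)$ for $0\le k\le\lfloor p/2\rfloor$, so that the coefficients are genuinely positive, and noting that the omitted terms of the bi-infinite sum vanish at the support endpoints by \eqref{eq:support-car}. Both the even and odd cases are handled uniformly, because \eqref{c_eq:hp_finitesum} and \eqref{eq:partunity} are stated independently of the parity of $p$; the endpoint values $\frac12$ and $1$ computed above confirm strict positivity in each parity.
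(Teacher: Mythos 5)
Your proposal is correct and follows essentially the same route as the paper: the bound \eqref{c_max_hpQ} is obtained by majorizing the oscillatory factor by $1$ in the representation of $h_p^{\spQ_\param}$ (the paper uses $|\e^{\ii k\theta}|=1$ in \eqref{c_eq:hp}, you use $\cos(k\theta)\le 1$ in \eqref{c_eq:hp_finitesum} — the same step given positivity of the coefficients and the symmetry \eqref{eq:sym_sym}), with the value $1$ identified via the partition of unity, and \eqref{c_max_fpQ} then follows from \eqref{c_rel_hpfp} exactly as in the paper. Your version merely spells out the support and positivity bookkeeping that the paper leaves to the reader.
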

\begin{proof}
First, \eqref{c_max_hpQ} can be proved by means of \eqref{c_eq:hp}, thanks to 
$$ h_p^{\spQ_\param}(\theta) = \sum_{k\in\ZZ} {\phi}_{p}^{\spQ_\param}
                               \left(\frac{p+1}{2}-k\right)\,\e^{\ii k \theta}
                          \leq \sum_{k\in\ZZ} {\phi}_{p}^{\spQ_\param}
                               \left(\frac{p+1}{2}-k\right)\, \vert \e^{\ii k \theta} \vert = 1, $$
where it can be directly verified that $ h_p^{\spQ_\param}(0) = 1 $. \\
Then, \eqref{c_max_fpQ} is a consequence of \eqref{c_max_hpQ} and \eqref{c_rel_hpfp}.
\end{proof}

For lower bounds, we start by defining, for $ p \geq 3 $, a function $ r_p^{\spQ_\param}(\theta) $, as
\begin{align*}
r_p^{\spH_\param}(\theta) & = \left(\frac{\param^2}{\cosh(\param)-1}\right)
          \sum_{k \neq 0} \left(\frac{\cosh(\param)-\cos(\theta)}{(\theta+2k\pi)^2+\param^2}\right)
          \frac{(-1)^{k(p-1)}}{(\theta+2k\pi)^{p-1}}, \\
r_p^{\spT_\param}(\theta) & = \left(\frac{\param^2}{1-\cos(\param)}\right)
          \sum_{k \neq 0} \left(\frac{\cos(\param)-\cos(\theta)}{(\theta+2k\pi)^2-\param^2}\right) 
          \frac{(-1)^{k(p-1)}}{(\theta+2k\pi)^{p-1}}.
\end{align*}
Thanks to this definitions, we can operate the decompositions
\begin{align*}
h_p^{\spH_\param}(\theta) & = \left(\frac{\param^2}{\cosh(\param)-1}\right)
          \left(\frac{\cosh(\param)-\cos(\theta)}{\theta^2+\param^2}\right) 
          \left( \frac{\sin(\theta/2)}{\theta/2} \right)^{p-1}          
          + (2 \sin(\theta/2))^{p-1} r_p^{\spH_\param}(\theta), \\
h_p^{\spT_\param}(\theta) & = \left(\frac{\param^2}{1-\cos(\param)}\right)
          \left(\frac{\cos(\param)-\cos(\theta)}{\theta^2-\param^2}\right) 
          \left( \frac{\sin(\theta/2)}{\theta/2} \right)^{p-1}          
          + (2 \sin(\theta/2))^{p-1} r_p^{\spT_\param}(\theta).
\end{align*}
It is not easy to study $r_p^{\spQ_\param}(\theta)$ in its full generality, since the sign of the addenda composing it can be dependent of the various parameter $ \spQ, \alpha,k,p,\theta $. \\
Nevertheless, some results can be proved and some other ones can be reasonably conjectured, as follows:
\begin{lemma} \label{lem:lower_bound}
If $ \spQ = \spH $ and $p$ is odd, then $ (2 \sin(\theta/2))^{p-1} r_p^{\spH_\param}(\theta) \geq 0 $, and
\begin{align*}
h_p^{\spH_\param}(\theta) & \geq \left( \frac{\param^2}{\cosh(\param)-1} \right)
\left( \frac{\cosh(\param)+1}{\param^2+\pi^2} \right) \left( \frac{2}{\pi} \right)^{p-1},
\quad p \geq 1, \\
f_p^{\spH_\param}(\theta) & \geq (2-2\cos(\theta)) \left( \frac{\param^2}{\cosh(\param)-1} \right)
\left( \frac{\cosh(\param)+1}{\param^2+\pi^2} \right) \left( \frac{2}{\pi} \right)^{p-3},
\quad p \geq 3.
\end{align*}
In particular, $ f_p^{\spH_\param}(\theta) $ has a zero of multeplicity 2 for $\theta=0$, that is $ f_p^{\spH_\param}(0) = \displaystyle \frac{\partial f_p^{\spH_\param}}{\partial \theta}(0) = 0 $, and $ \displaystyle \frac{\partial^2 f_p^{\spH_\param}}{\partial \theta^2}(0) \neq 0 $. \\
\end{lemma}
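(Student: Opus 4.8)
The plan is to use the decomposition of $h_p^{\spH_\param}$ into its $k=0$ term plus the remainder $(2\sin(\theta/2))^{p-1}r_p^{\spH_\param}(\theta)$ displayed just above the statement, and to show that under the two hypotheses this remainder is non-negative, so that $h_p^{\spH_\param}$ is bounded below by its single $k=0$ term. For the sign claim, note that $p$ odd makes $p-1$ even, so $(2\sin(\theta/2))^{p-1}\geq 0$, the sign $(-1)^{k(p-1)}$ collapses to $+1$, and $(\theta+2k\pi)^{p-1}>0$ for every $k\neq 0$ and $\theta\in[-\pi,\pi]$ (the base never vanishes there). In the hyperbolic case the surviving factors are manifestly positive: $\param^2/(\cosh(\param)-1)>0$, $\cosh(\param)-\cos(\theta)\geq\cosh(\param)-1>0$, and $(\theta+2k\pi)^2+\param^2>0$. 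Hence every summand of $r_p^{\spH_\param}$ is positive, giving $r_p^{\spH_\param}(\theta)\geq 0$ and the asserted non-negativity. This is exactly where both hypotheses enter: for $\spQ=\spT$ the factors $\cos(\param)-\cos(\theta)$ and $(\theta+2k\pi)^2-\param^2$ may change sign, and for even $p$ the powers carry an uncontrolled sign.

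Dropping the non-negative remainder leaves $h_p^{\spH_\param}(\theta)\geq(\param^2/(\cosh(\param)-1))\,g(\theta)\,(\sin(\theta/2)/(\theta/2))^{p-1}$ with $g(\theta):=(\cosh(\param)-\cos(\theta))/(\theta^2+\param^2)$, and I would estimate the two $\theta$-dependent factors separately on $[-\pi,\pi]$. For the sinc power, $(\sin x)/x$ is even and decreasing in $|x|$ on $[-\pi/2,\pi/2]$, so with $x=\theta/2$ it is at least its endpoint value $2/\pi$; since $p-1\geq 0$, this yields $(\sin(\theta/2)/(\theta/2))^{p-1}\geq(2/\pi)^{p-1}$. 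The genuine obstacle is the factor $g$: I claim $g(\theta)\geq g(\pi)=(\cosh(\param)+1)/(\pi^2+\param^2)$, which is not obvious because the numerator and denominator of $g$ both increase with $|\theta|$, so the sign of $g'$ is a priori unclear.

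To settle it I would prove $g$ non-increasing on $[0,\pi]$ (it is even, so this suffices), i.e. $\sin(\theta)(\theta^2+\param^2)\leq 2\theta(\cosh(\param)-\cos(\theta))$. This splits into two elementary inequalities that add up to the desired one: $\theta\sin(\theta)\leq 2(1-\cos(\theta))$, equivalent via the half-angle identities to $\tan(\theta/2)\geq\theta/2$ on $(0,\pi)$; and $\param^2\sin(\theta)\leq 2\theta(\cosh(\param)-1)$, which follows from $\sin(\theta)\leq\theta$ combined with $\sinh(\param/2)\geq\param/2$. Combining the two factor bounds with the positive common prefactor gives the stated lower bound for odd $p\geq 3$; the case $p=1$ is checked directly from the closed form $h_1^{\spH_\param}=(\param/2)\coth(\param/2)$, a positive constant.

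Finally, the bound for $f_p^{\spH_\param}$ is immediate from the factorization $f_p^{\spH_\param}(\theta)=(2-2\cos(\theta))\,h_{p-2}^{\spH_\param}(\theta)$ of \eqref{c_rel_hpfp}: for odd $p\geq 3$ the index $p-2$ is odd and at least $1$, so the $h$-bound applies to $h_{p-2}^{\spH_\param}$, and multiplying by $2-2\cos(\theta)\geq 0$ gives the claim. The same factorization produces the double zero at $\theta=0$: there $2-2\cos(\theta)$ vanishes to order exactly two while $h_{p-2}^{\spH_\param}(0)>0$ by the lower bound just proved, so differentiating the product yields $f_p^{\spH_\param}(0)=0$, $(f_p^{\spH_\param})'(0)=0$, and $(f_p^{\spH_\param})''(0)=2\,h_{p-2}^{\spH_\param}(0)\neq 0$.
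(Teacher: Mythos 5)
Your proposal is correct and follows exactly the route the paper sets up (the paper states this lemma without a formal proof, relying on the decomposition of $h_p^{\spH_\param}$ into the $k=0$ term plus $(2\sin(\theta/2))^{p-1}r_p^{\spH_\param}(\theta)$ and the remark that the lowest degrees are handled via the explicit formulas): you supply the missing details — termwise positivity of $r_p^{\spH_\param}$ for odd $p$, the sinc bound, and the monotonicity of $(\cosh(\param)-\cos(\theta))/(\theta^2+\param^2)$ — all of which check out. The only point stated a bit too quickly is the $p=1$ case, where being ``a positive constant'' is not enough: one still has to verify $(\param/2)\coth(\param/2)\geq \param^2(\cosh(\param)+1)/\bigl((\cosh(\param)-1)(\param^2+\pi^2)\bigr)$, which reduces to the true but not entirely trivial inequality $(\param^2+\pi^2)\tanh(\param/2)\geq 2\param$.
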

Note that \eqref{c_expl:h1} and \eqref{c_expl:gf3T}-\eqref{c_expl:gf3H} (or \eqref{c_rel_hpfp}) are directly used in the lowest degree case, because $r_p^{\spH_\param}(\theta)$ is defined only for $ p \geq 3 $.
\begin{conjecture} \label{conj:lower_bound}
For both $ \spQ = \spH, \spT $, and for both $p$ odd and even, we have
\begin{align*}
h_p^{\spH_\param}(\theta) & \geq \left( \frac{\param^2}{\cosh(\param)-1} \right)
\left( \frac{\cosh(\param)+1}{\param^2+\pi^2} \right) \left( \frac{2}{\pi} \right)^{p-1},
\quad p \geq 1, \\
h_p^{\spT_\param}(\theta) & \geq \left( \frac{\param^2}{1-\cos(\param)} \right)
\left( \frac{\cos(\param)+1}{\pi^2-\param^2} \right) \left( \frac{2}{\pi} \right)^{p-1},
\quad p \geq 1, \\
f_p^{\spH_\param}(\theta) & \geq (2-2\cos(\theta)) \left( \frac{\param^2}{\cosh(\param)-1} \right)
\left( \frac{\cosh(\param)+1}{\param^2+\pi^2} \right) \left( \frac{2}{\pi} \right)^{p-3},
\quad p \geq 3, \\
f_p^{\spT_\param}(\theta) & \geq (2-2\cos(\theta)) \left( \frac{\param^2}{1-\cos(\param)} \right)
\left( \frac{\cos(\param)+1}{\pi^2-\param^2} \right) \left( \frac{2}{\pi} \right)^{p-3},
\quad p \geq 3.
\end{align*}
In particular, $ f_p^{\spQ_\param}(\theta) $ has a zero of multeplicity 2 for $\theta=0$, that is $ f_p^{\spQ_\param}(0) = \displaystyle \frac{\partial f_p^{\spQ_\param}}{\partial \theta}(0) = 0 $, and $ \displaystyle \frac{\partial^2 f_p^{\spQ_\param}}{\partial \theta^2}(0) \neq 0 $. \\
This is strongly supported by a large number of numerical tests, which show the behavior described in Lemma \ref{lem:lower_bound}, and proved there for certain sets of parameters, to hold in a more general setting. 
\end{conjecture}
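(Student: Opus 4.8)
The plan is to reduce the entire statement to lower bounds for the single family $h_p^{\spQ_\param}$, $p\geq 1$, and then to attack those through the main-term/remainder splitting introduced just before the statement.

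First I would dispose of everything concerning $f_p^{\spQ_\param}$. By \eqref{c_rel_hpfp} we have $f_p^{\spQ_\param}(\theta)=(2-2\cos\theta)\,h_{p-2}^{\spQ_\param}(\theta)$ for $p\geq 3$, and $2-2\cos\theta\geq 0$ on $[-\pi,\pi]$; hence each conjectured lower bound for $f_p^{\spQ_\param}$ is exactly $(2-2\cos\theta)$ times the corresponding bound for $h_{p-2}^{\spQ_\param}$, so it follows at once from the $h$-bound at degree $p-2$. The multiplicity-two zero at $\theta=0$ is then immediate: since $2-2\cos\theta=\theta^2+O(\theta^4)$, differentiating the product gives $f_p^{\spQ_\param}(0)=\partial_\theta f_p^{\spQ_\param}(0)=0$ and $\partial_\theta^2 f_p^{\spQ_\param}(0)=2\,h_{p-2}^{\spQ_\param}(0)\neq 0$, where $h_{p-2}^{\spQ_\param}(0)\neq 0$ follows from \eqref{c_max_hpQ} for $p\geq 4$ and from \eqref{c_expl:h1} for $p=3$. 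Thus only the four $h$-bounds remain, for all $p\geq 1$ and both parities; by \eqref{c_eq:hp_finitesum} the symbol is even, so I restrict to $\theta\in[0,\pi]$.

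For the $h$-bounds I would use $h_p^{\spQ_\param}(\theta)=M_p^{\spQ_\param}(\theta)+(2\sin(\theta/2))^{p-1}\,r_p^{\spQ_\param}(\theta)$, where $M_p^{\spQ_\param}$ is the $k=0$ term displayed before the statement of Lemma \ref{lem:lower_bound}. A one-line computation identifies the conjectured bound with the endpoint value $M_p^{\spQ_\param}(\pi)$: the factor $\cosh\param+1$ (resp. $\cos\param+1$) is $\cosh\param-\cos\pi$ (resp. $\cos\param-\cos\pi$), and $(2/\pi)^{p-1}$ is $(\sin(\pi/2)/(\pi/2))^{p-1}$. The routine half of the work is to show $M_p^{\spQ_\param}(\theta)\geq M_p^{\spQ_\param}(\pi)$ on $[0,\pi]$: the sinc factor $(\sin(\theta/2)/(\theta/2))^{p-1}$ is decreasing, and the rational factor $(\cosh\param-\cos\theta)/(\theta^2+\param^2)$, resp. $(\cos\param-\cos\theta)/(\theta^2-\param^2)$ — the latter positive and smooth on $[-\pi,\pi]$ because numerator and denominator vanish together at $\theta=\param$ — can be controlled by elementary calculus. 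Granting this, it suffices to prove $(2\sin(\theta/2))^{p-1}r_p^{\spQ_\param}(\theta)\geq 0$.

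The nonnegativity of the remainder is the genuine obstacle, and it splits by case. Writing $u_k:=\theta/2+k\pi$ and setting $a_k:=\bigl((\theta+2k\pi)^2\pm\param^2\bigr)^{-1}|u_k|^{-(p-1)}>0$ for $k\neq 0$ (positivity in the trigonometric case using $0<\param<\pi$), the remainder equals a positive prefactor times $\sum_{k\neq 0}(-1)^{k(p-1)}\operatorname{sign}(u_k)^{\,p-1}a_k$. When $\spQ=\spH$ and $p$ is odd every summand is nonnegative, which is precisely the case already settled in Lemma \ref{lem:lower_bound}. For $\spQ=\spH$ and $p$ even the signs genuinely alternate, and I would pair $k$ with $-k$; since $|u_{-k}|<|u_k|$ for $\theta\in(0,\pi)$ the tail rearranges into an alternating series $\sum_{k\geq 1}(-1)^{k+1}(a_{-k}-a_k)$ with $a_{-k}-a_k>0$, so Leibniz' criterion yields positivity provided $a_{-k}-a_k$ is monotone decreasing in $k$. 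The trigonometric cases carry the extra difficulty that the common factor $\cos\param-\cos\theta$ changes sign at $\theta=\param$, so the remainder is no longer sign-definite on all of $[0,\pi]$; here I would split the interval at $\theta=\param$, run the pairing argument on $[\param,\pi]$ where $\cos\param-\cos\theta\geq 0$ (the range in which $M_p^{\spQ_\param}$ is smallest and the bound is tight), and on $[0,\param]$ exploit that $M_p^{\spQ_\param}(\theta)$ is already close to $1$ and dominates $M_p^{\spQ_\param}(\pi)$ with room to absorb the small remainder. The crux, and presumably the reason the statement is only conjectured, is to make the monotonicity of the paired magnitudes $a_{-k}-a_k$ and the remainder bound on $[0,\param]$ hold uniformly in $\spQ,\param,k,p$ and $\theta$ simultaneously; this is exactly the obstacle flagged before the statement, and it is where I expect essentially all the effort to go.
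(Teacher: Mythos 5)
You should note first that the statement you were asked to prove is, in the paper, a \emph{conjecture}: the author proves only the special case $\spQ=\spH$ with $p$ odd (Lemma \ref{lem:lower_bound}) and explicitly leaves the remaining cases open, citing numerical evidence. So there is no paper proof to compare against; the question is whether your argument actually closes the conjecture, and it does not. The parts of your proposal that are complete are correct and essentially reproduce the paper's own framework: the reduction of the $f_p^{\spQ_\param}$ bounds to the $h_{p-2}^{\spQ_\param}$ bounds via \eqref{c_rel_hpfp}, the resulting multiplicity-two zero at $\theta=0$ (using $h_{p-2}^{\spQ_\param}(0)\neq0$ from \eqref{c_max_hpQ} and \eqref{c_expl:h1}), the observation that the conjectured constant is exactly the $k=0$ term of \eqref{c_hpH}--\eqref{c_hpT} evaluated at $\theta=\pi$, and the splitting $h_p^{\spQ_\param}=M_p^{\spQ_\param}+(2\sin(\theta/2))^{p-1}r_p^{\spQ_\param}$, which is precisely the decomposition the paper introduces before Lemma \ref{lem:lower_bound}.

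The genuine gap is that every step beyond the already-proved hyperbolic-odd case is deferred rather than carried out, and these deferred steps are the entire content of the conjecture. Concretely: (i) your Leibniz-criterion argument for $\spQ=\spH$, $p$ even, requires that $a_{-k}-a_k$ be positive \emph{and} monotonically decreasing in $k$, uniformly in $\theta\in(0,\pi)$, $\param$ and $p$ — you state this as a proviso and never verify it; (ii) in the trigonometric cases the remainder is not sign-definite (as the paper itself warns, the signs of the addenda depend on $\spQ,\param,k,p,\theta$), and your proposed fix — an absorption estimate on $[0,\param]$ where $M_p^{\spT_\param}$ "dominates with room to spare" — is stated with no quantitative bound on $|r_p^{\spT_\param}|$, which is exactly what would be needed; (iii) the monotonicity of $M_p^{\spQ_\param}$ on $[0,\pi]$ (needed so that $M_p^{\spQ_\param}(\theta)\geq M_p^{\spQ_\param}(\pi)$) is asserted as routine calculus but not proved, and for the trigonometric rational factor $(\cos\param-\cos\theta)/(\theta^2-\param^2)$ it is not obviously true; (iv) the $h$-bounds are claimed for $p\geq1$, but $r_p^{\spQ_\param}$ is only defined for $p\geq3$, so $p=1,2$ require direct verification from \eqref{c_expl:h1}, \eqref{c_expl:h2T}, \eqref{c_expl:h2H}, which you do not address. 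Your closing sentence concedes that "essentially all the effort" lies in these steps; that is an accurate self-assessment, and it means what you have written is a plausible research plan — one consistent with, and modestly extending, the paper's own partial result — but not a proof.
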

We conclude this Subsection with some results relative to the polynomial case which are analogous to Lemmas \ref{lem:upper_bound}-\ref{lem:lower_bound} (and Conjecture \ref{conj:lower_bound}). From \cite[Lemmas 3.4 and 3.6]{our-MATHCOMP} we have
\begin{align}
\left( \frac{2}{\pi} \right)^{p+1} \leq
\left( \frac{2 - 2\cos(\theta)}{\theta^2} \right)^{\frac{p+1}{2}} & \leq h_p(\theta) \leq h_p(0) = 1,
\quad p \geq 0, \\
(2 - 2\cos(\theta)) \left( \frac{2}{\pi} \right)^{p-1} \leq
\frac{(2 - 2\cos(\theta))^{\frac{p+1}{2}}}{\theta^{p-1}} & \leq f_p(\theta) \leq 2 - 2\cos(\theta),
\quad p \geq 2.
\end{align}
Note that stricter bounds are available also for the upper limitations, and it is possible to limit also $g_p(\theta)$, see also \cite[Lemma 3.5]{our-MATHCOMP}.

\begin{figure}
   \includegraphics[scale=0.23]{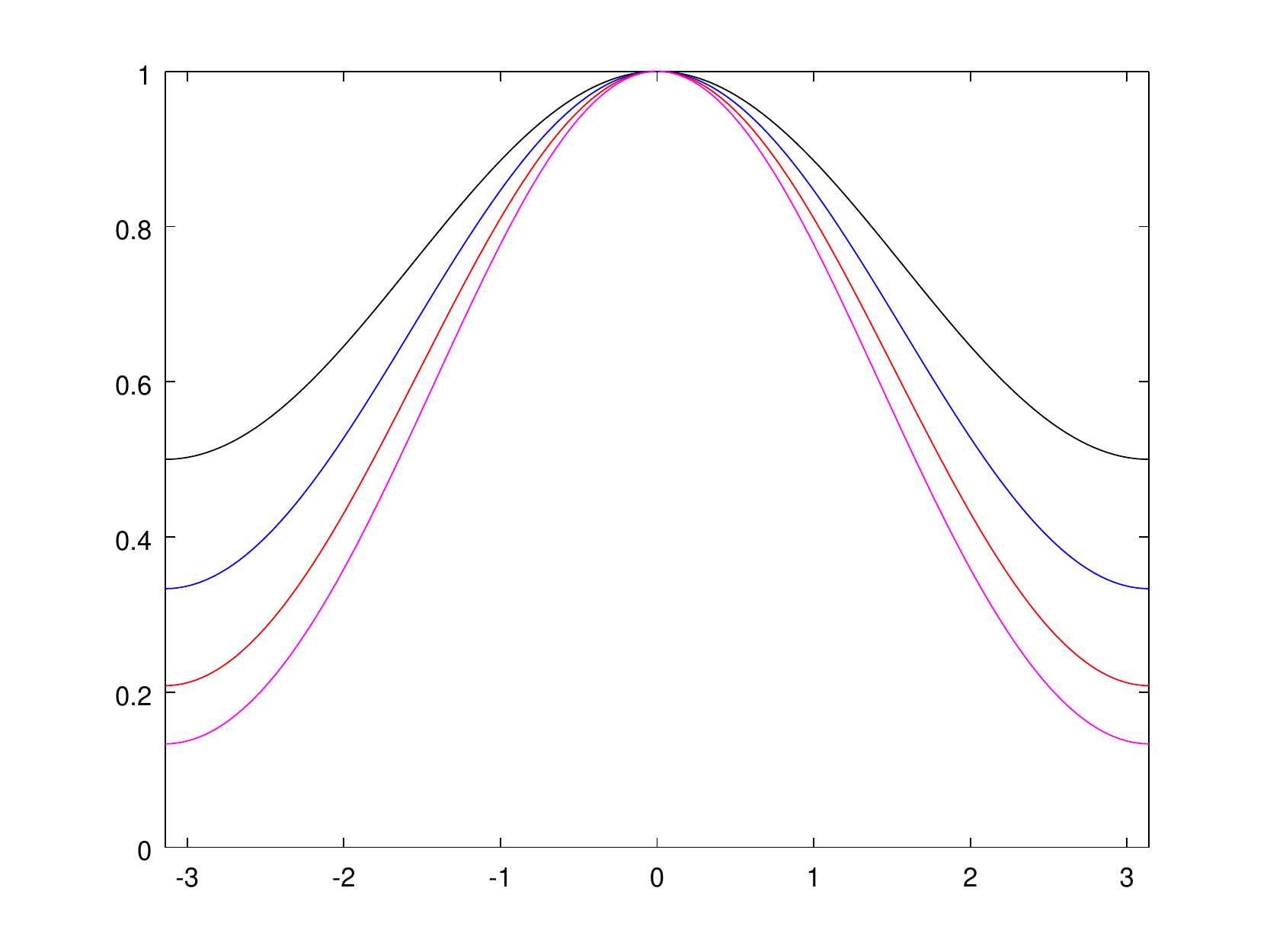} 
   \includegraphics[scale=0.23]{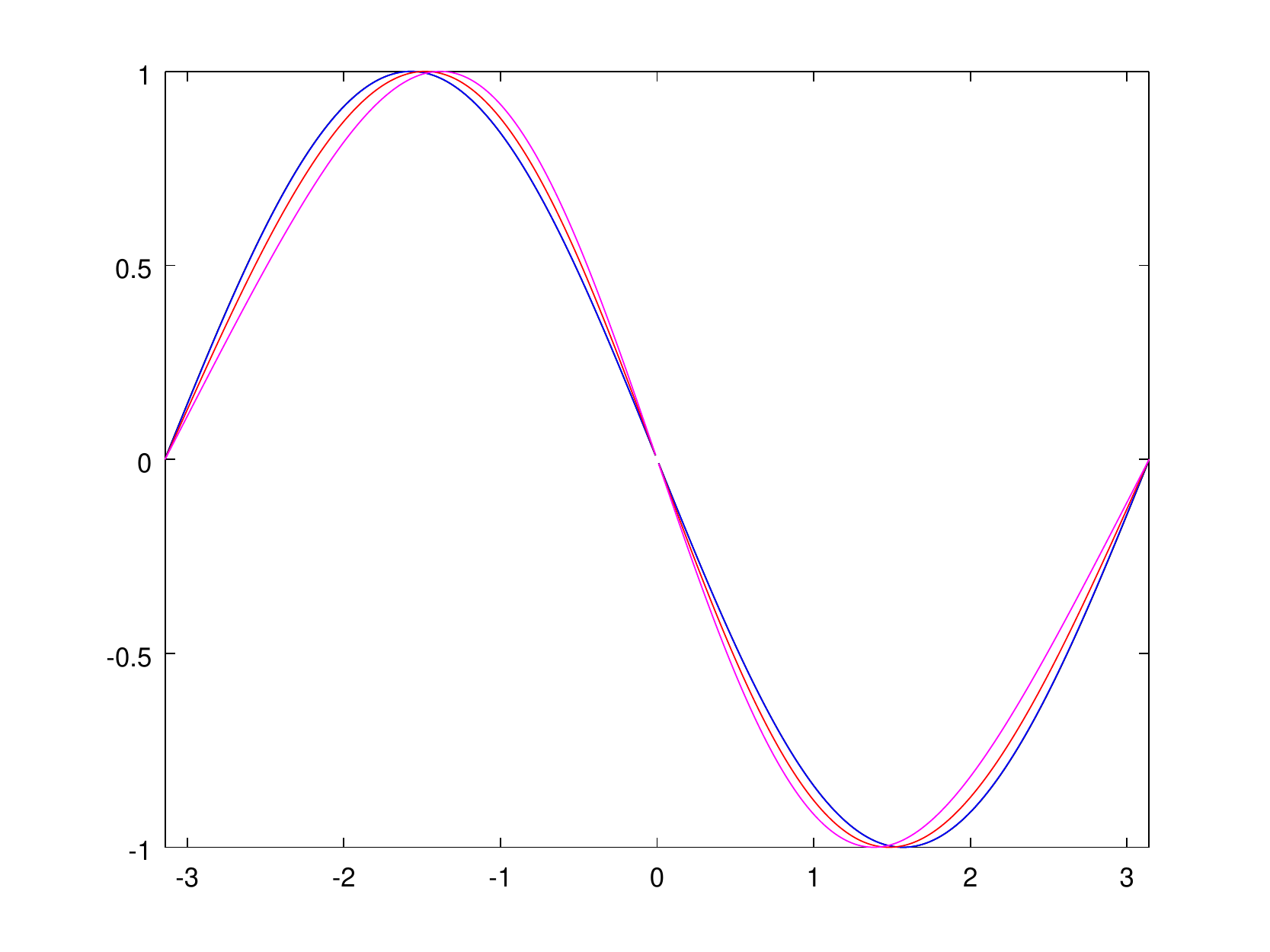}
   \includegraphics[scale=0.23]{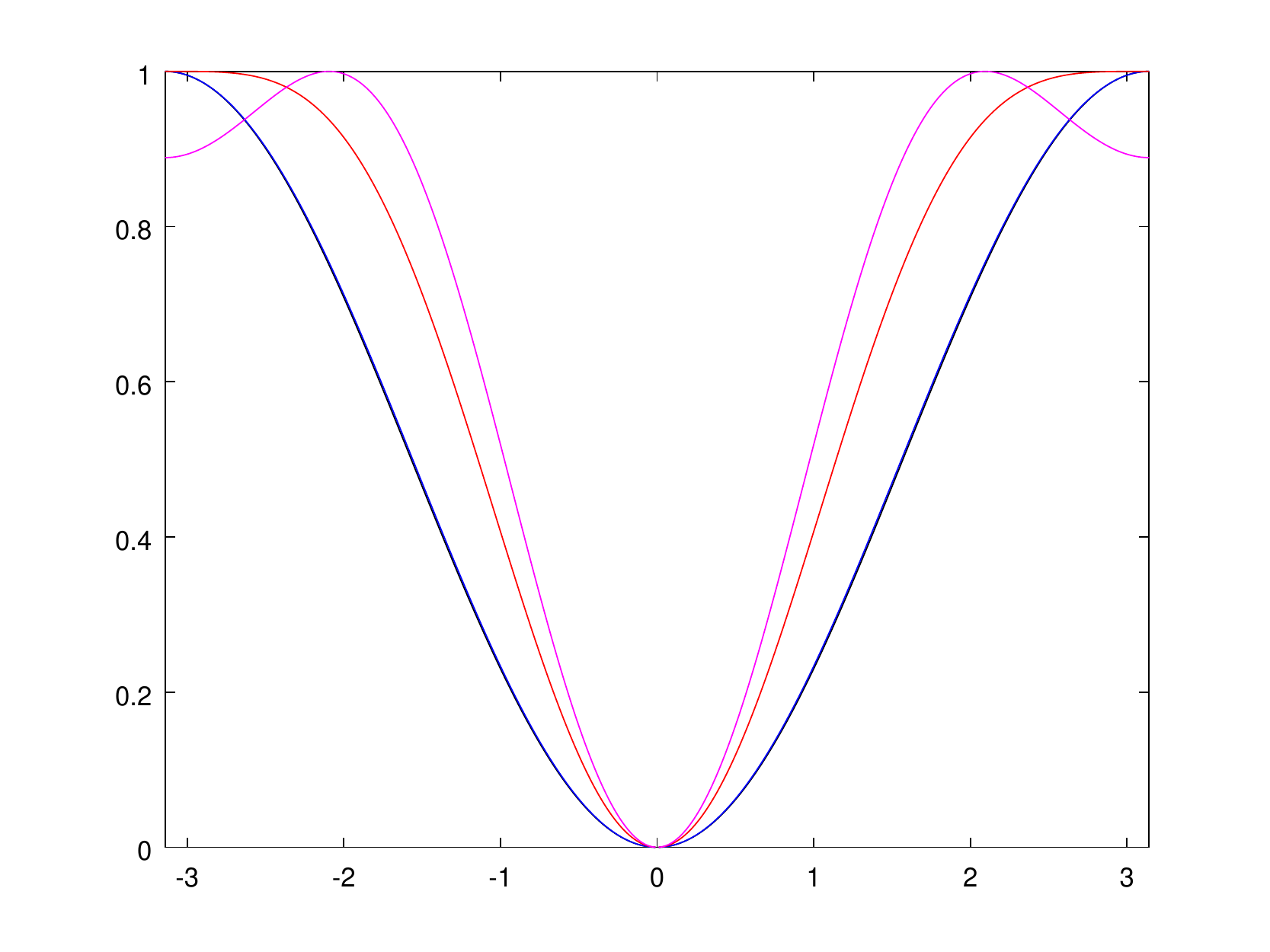}
\\ \includegraphics[scale=0.23]{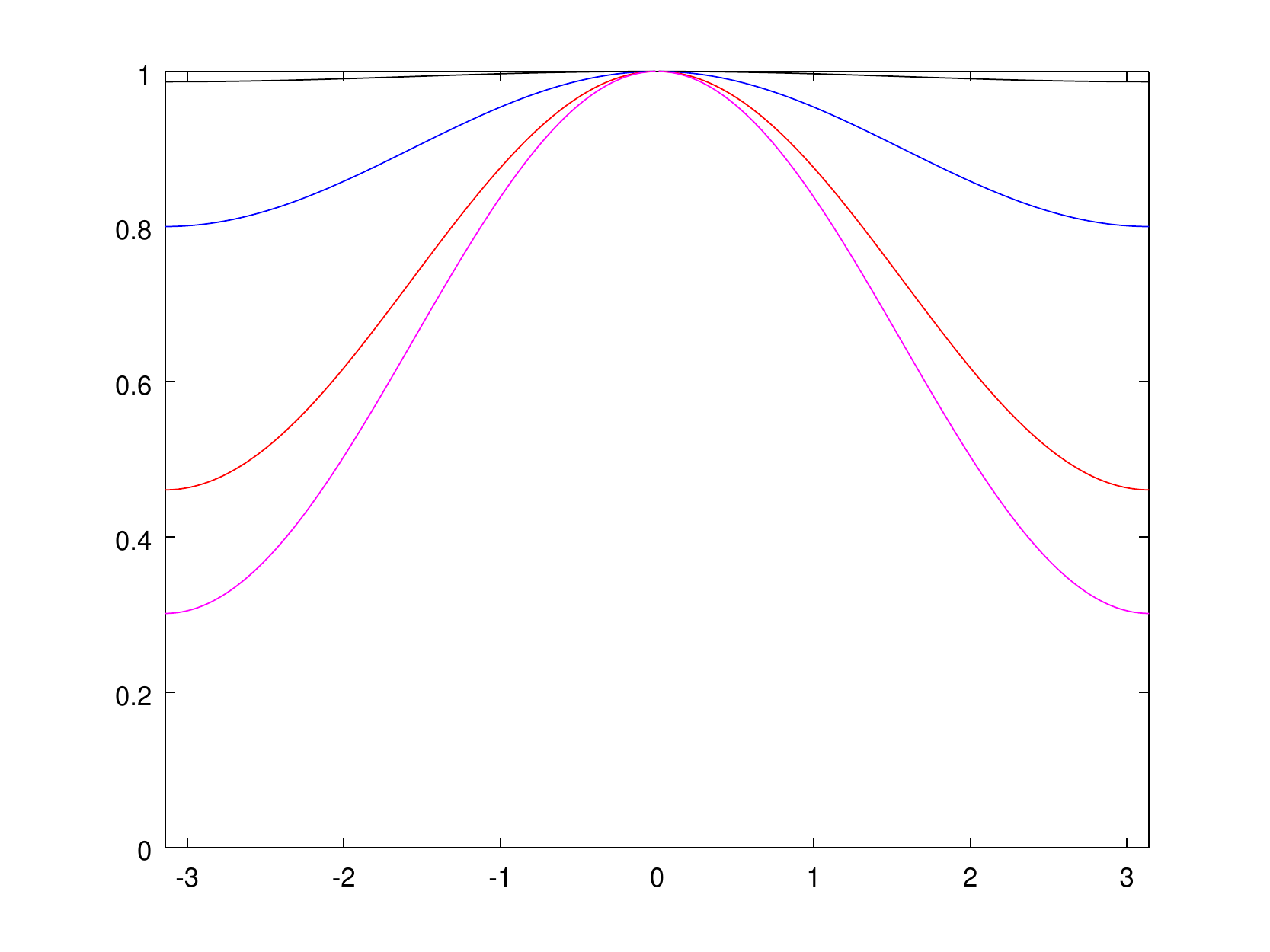} 
   \includegraphics[scale=0.23]{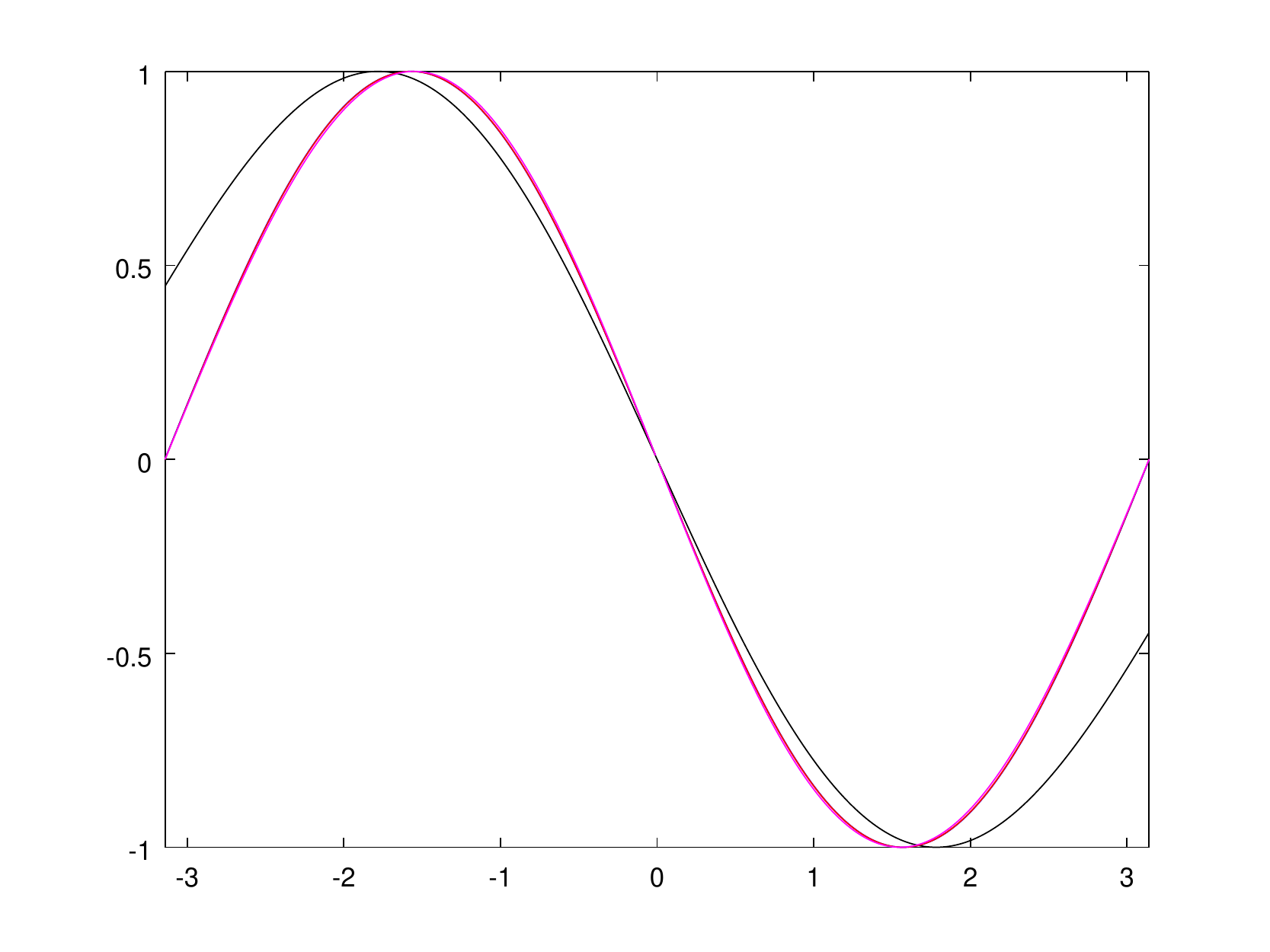}
   \includegraphics[scale=0.23]{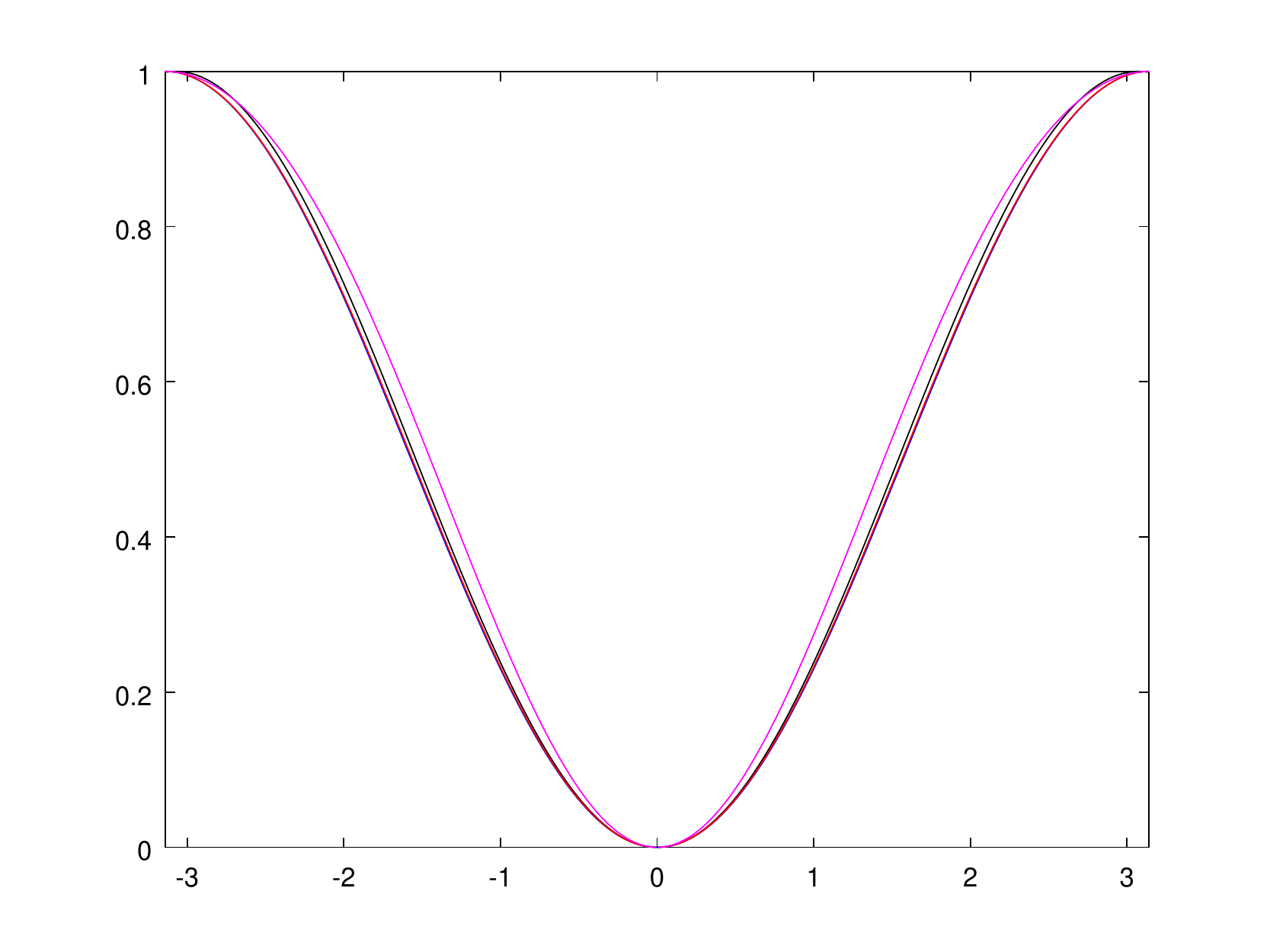}
\\ \includegraphics[scale=0.23]{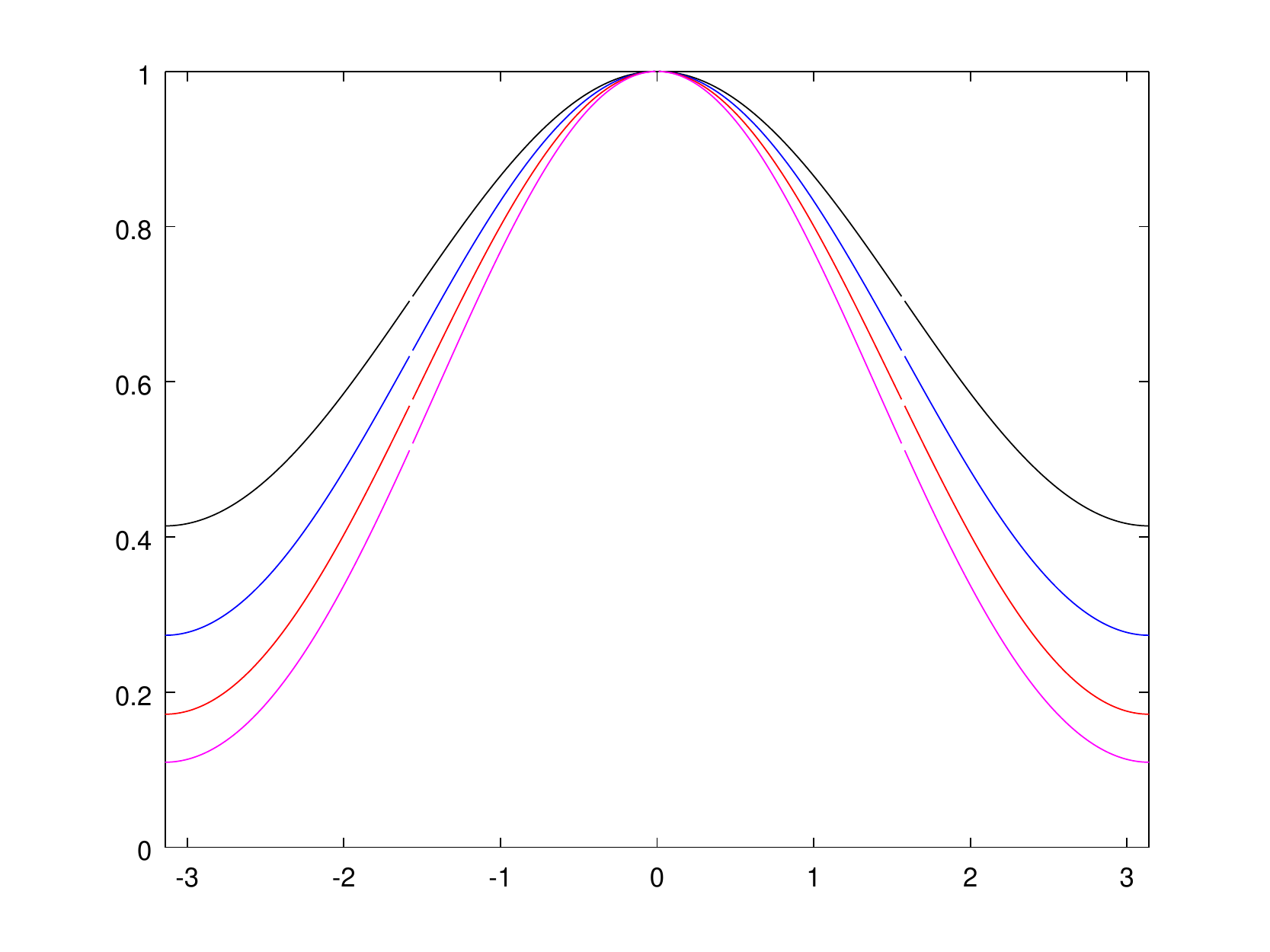} 
   \includegraphics[scale=0.23]{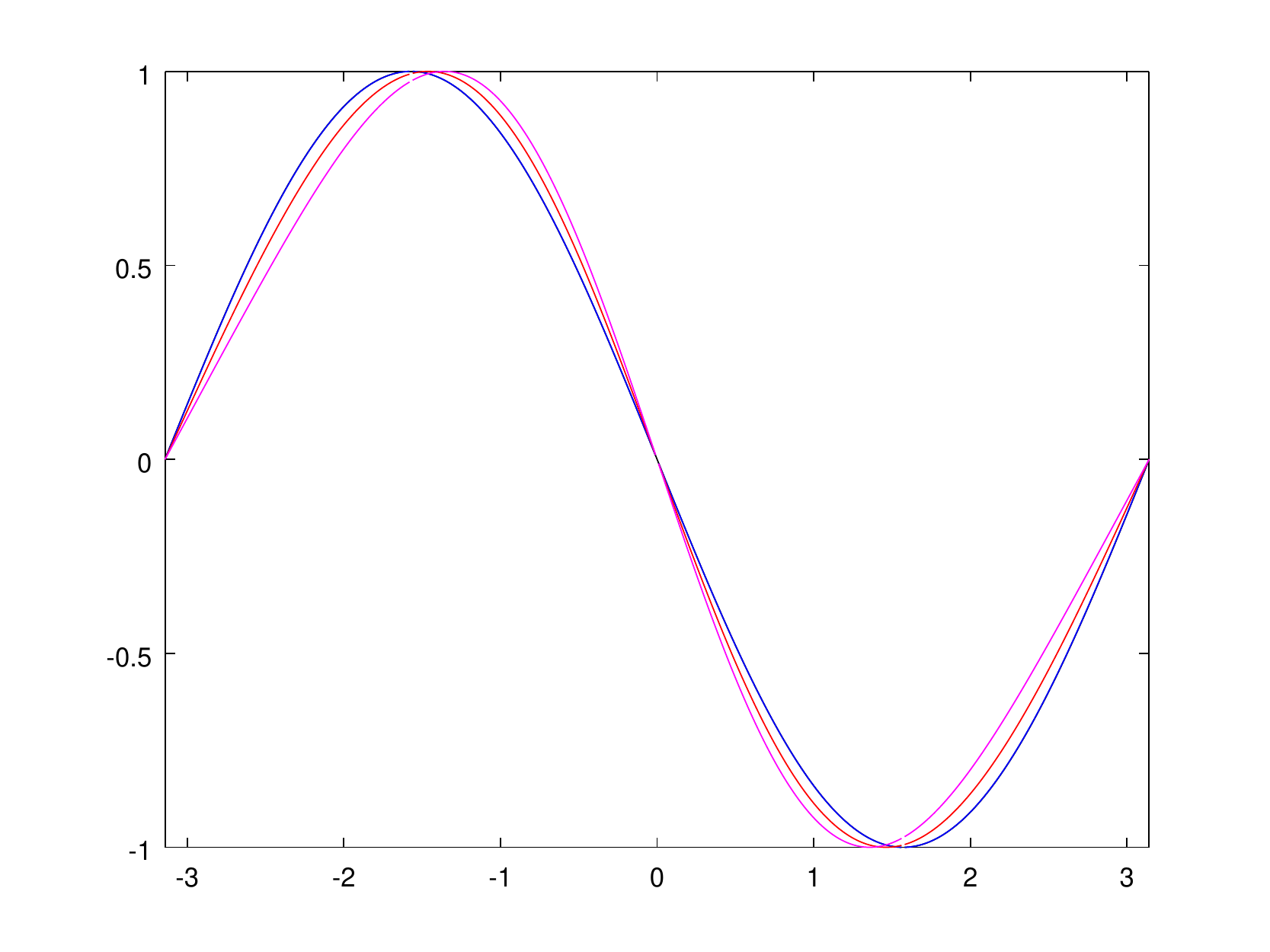}
   \includegraphics[scale=0.23]{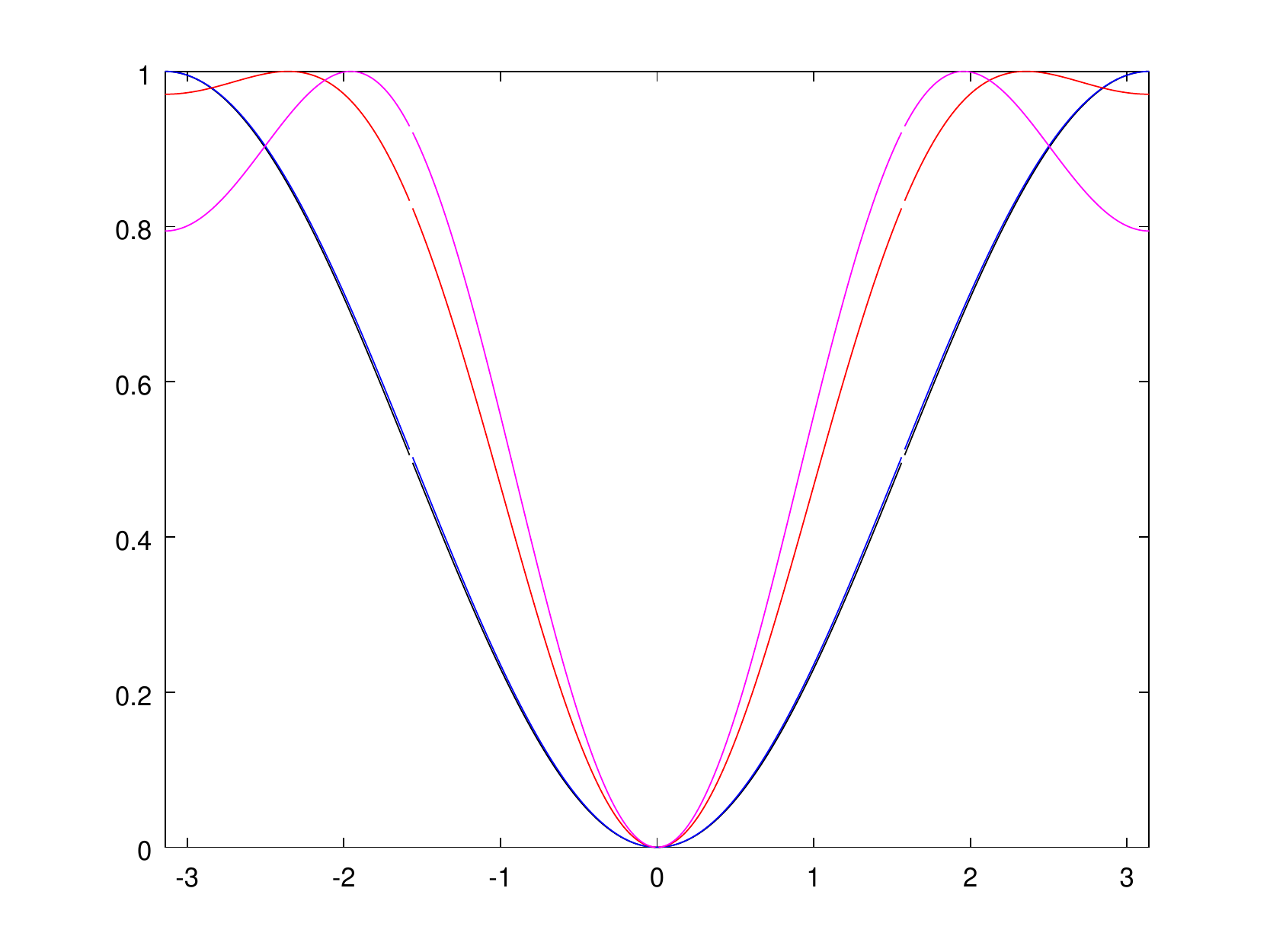}
\caption{From Top to Bottom. 
Left: plots of $h_p$, $h_p^{\spH_{10}}$, $h_p^{\spT_{\pi/2}}$.
Center. normalized plots of $g_p$, $g_p^{\spH_{10}}$, $g_p^{\spT_{\pi/2}}$
Right: normalized plots of $f_p$, $f_p^{\spH_{10}}$, $f_p^{\spT_{\pi/2}}$.
Black: $p=2$, blue: $p=3$, red: $p=4$, magenta: $p=5$.}
\label{c_fig:fp}
\end{figure}

\section{The 1D setting} \label{sec:col1D} 

We assume at first $ \Omega = (a,b) $. Then we focus on the problem

\begin{equation} \left\{ \begin{aligned} \label{c_eq:polycolloc401}
& -\kappa(x) u''(x) + \beta(x) u'(x) + \gamma(x) u(x) = \hbox{f}(x), \qquad a < x < b, \\
& u(a) = 0, \quad u(b) = 0
\end{aligned} \right. 
\end{equation}
We consider the approximation of the solution of \eqref{c_eq:polycolloc401} by the standard collocation approach, as explained briefly in the following. Let $ {\cal W} $ be a finite dimensional vector space of sufficiently smooth functions defined on $ \hbox{cl}(\Omega) = [a,b] $ and vanishing on the boundary $ \partial \Omega = \{a,b \} $. We call $ {\cal W} $ the approximation space. Then, we introduce a set of $ N := \dim {\cal W} $ collocation points in $ \Omega = (a,b) $,
\begin{equation} \label{c_eq:polycolloc202}
\{ \tau_i \in (a,b), \quad i = 1, \ldots, N \},
\end{equation}
and we look for a function $ u_{\cal W} \in {\cal W} $ such that
\begin{equation} \label{c_eq:polycolloc203}
- \kappa(\tau_i) u''_{\cal W}(\tau_i) + \beta(\tau_i) u'_{\cal W}(\tau_i) + \gamma(\tau_i) u_{\cal W}(\tau_i)
= \hbox{f}(\tau_i), \quad \forall \tau_i
\end{equation}
If we fix a basis $ \{ \varphi_1, \ldots, \varphi_N \} $ for $ {\cal W} $, then each $ v \in {\cal W} $ can be written as $ v = \sum_{j=1}^N v_j \varphi_j $. The collocation problem \eqref{c_eq:polycolloc203} is equivalent to the problem of finding a vector $ {\bf u} := [u_1 u_2 \cdots u_N]^T \in \RR^N $ such that
\begin{equation} \label{collocation_system}
A\bfu=\bff,
\end{equation}
where
\begin{equation*}
A := [- \kappa(\tau_i) \varphi''_j(\tau_i) + \beta(\tau_i) \varphi'_j(\tau_i) + \gamma(\tau_i) \varphi_j(\tau_i)]
     _{i,j=1}^N \in \RR^{N \times N}
\end{equation*}
is the collocation matrix and $ \bff := [\hbox{f}(\tau_i)]_{i=1}^N $. Once we find $\bfu$, we know $ u_{\cal W} = \sum_{j=1}^N u_j \varphi_j $. The regularity of the system \eqref{collocation_system} depends on the selection of the collocation points \eqref{c_eq:polycolloc202}. \\
We consider also a global geometry function $ G : [0,1] \rightarrow \hbox{cl}(\Omega) $, which we call \emph{geometry map}, describing the closure of the physical domain $ \hbox{cl}(\Omega) = [a,b] $ in terms of the parametric domain $[0,1]$. We assume that $G$ is invertible in $[0,1]$ and $ G(\partial([0,1])) = \partial \Omega = \{a,b \} $; this allows us to relate the respective basis functions as
\begin{equation} \label{c_eq:polycolloc207}
\varphi_i(x) := \hat{\varphi}_i(G^{-1}(x)) = \hat{\varphi}_i(\hat{x}), \quad x = G(\hat{x})
\end{equation}
and their collocation points as
\begin{equation} \label{c_eq:polycolloc208}
\tau_i := G(\hat{\tau}_i)
\end{equation}
With GB-splines, we suppose without loss of generality $ \Omega = (0,1) $, we fix $ p \geq 2, n \geq 2 $, and we let $ \mathcal{V}_{n,p}^{U,V} $ the space of GB-splines of degree $p$ defined over the knot sequence 
%
\begin{equation} \label{c_eq:poly34}
t_1 = \cdots = t_{p+1} = 0 < t_{p+2} < \cdots < t_{p+n} < 1 = t_{p+n+1} = \ldots = t_{2p+n+1},
\end{equation}
where
\begin{equation} \label{c_eq:poly35}
t_{p+i+1} := \frac{i}{n}, \qquad \forall i = 0, \ldots, n,
\end{equation}
and the extreme knots have multiplicity $p+1$. More precisely,
\begin{equation} \label{c_eq:poly35bis}
\mathcal{V}_{n,p}^{U,V} := \{ s \in C^{p-1}([0,1]) : s|_{[t_{p+i+1},t_{p+i+2})} \in \mathbb{P}_p^{U,V}, \quad
\forall i = 0, 1, \ldots, n-1 \}
\end{equation}
%
where $ \mathbb{P}_p^{U,V} $ is the section space \eqref{Puv}.
Let $ \mathcal{W}_{n,p}^{U,V} $ be the subspace of $ \mathcal{V}_{n,p}^{U,V} $ formed by the spline functions vanishing at the boundary of $[0,1]$, i.e.,
\begin{equation} \label{c_eq:poly36}
\mathcal{W}_{n,p}^{U,V} := \{ s \in \mathcal{V}_{n,p}^{U,V}: s(0)=s(1)=0 \} \subset H_0^1([0,1]).
\end{equation}
We recall that $ \dim \mathcal{V}_{n,p}^{U,V} = n+p $ and $ \dim \mathcal{W}_{n,p}^{U,V} = n+p-2 $. We choose the approximation space $ \mathcal{W} = \mathcal{W}_{n,p}^{U,V} $ for some $ p \geq 2, n \geq 2 $. \\
Note that, due to the fact the intervals have width of $1/n$, and not $1$, we have to consider a parameter change in the nonpolynomial functions, which is not necessary in the purely polynomial context, because there it does not correspond to a modification in the space. \\
Thus, we use the superscript $^{(U,V)(\cdot/n)}$ when we want to refer to an entity that depends of the functions $U$ and $V$, but with its parameter divided by $n$. \\
Indeed, the space (\ref{c_eq:poly35bis}) is spanned by a GB-spline basis $ \{ N^{U,V}_{i,p} : i = 1, \ldots, n+p \} $, for which the central basis functions, $ i = p+1, \ldots, n $, defined on the knot sequence (\ref{c_eq:poly34}) - (\ref{c_eq:poly35}), are cardinal GB-splines. Also, the space (\ref{c_eq:poly36}) is spanned by the same set, excluding the extremal splines $ N^{U,V}_{1,p} $ and $ N^{U,V}_{n+p,p} $. \\
An interesting set of collocation points for the space $ \mathcal{W}_{n,p}^{U,V} $ are the so-called Greville abscissae. The Greville abscissae corresponding to the GB-spline basis of $ \mathcal{V}_{n,p}^{U,V} $ are defined as
\begin{equation} \label{c_eq:polycolloc406}
\xi_{i,p} := \frac{t_{i+1} + t_{i+2} + \ldots + t_{i+p}}{p}, \quad i = 1, \ldots, n+p,
\end{equation}
so the set $ \{ \xi_{i+1,p}, i = 1, \ldots, n+p-2 \} $ can be taken as a set of collocation points for $ \mathcal{W}_{n,p}^{U,V} $. \\
%
The basis functions, by taking account also of the geometry map $G$, are given by \eqref{c_eq:polycolloc207} with
\begin{equation} \label{c_eq:polycolloc407}
\hat{\varphi}_i(\hat{x}) = N^{U,V}_{i+1,p}(\hat{x}), \quad i = 1, \ldots, n+p-2,
\end{equation}
and, see \eqref{c_eq:polycolloc208}
\begin{equation} \label{c_eq:polycolloc408}
\hat{\tau}_i = \xi_{i+1,p}, \quad i = 1, \ldots, n+p-2.
\end{equation}
The matrix $A$ in \eqref{collocation_system} based on \eqref{c_eq:polycolloc207} - \eqref{c_eq:polycolloc208} and \eqref{c_eq:polycolloc407} - \eqref{c_eq:polycolloc408} is the object of our interest; in the following, we will suppose for simplicity of notation that $G$ is the identity function, leaving to Sections \ref{geo_nonnested} and \ref{geo_nested} the case of a more general $G$. \\
Without a geometry map, the matrix $ A = A_{n,p}^{U,V} $ in \eqref{collocation_system} is equal to
\begin{align} \label{c_eq:polycolloc409}
A_{n,p}^{U,V} = [ - \kappa(\xi_{i+1,p}) (N^{U,V}_{j+1,p})'' (\xi_{i+1,p})
                & +  \beta(\xi_{i+1,p}) (N^{U,V}_{j+1,p})'  (\xi_{i+1,p}) \\
                & + \gamma(\xi_{i+1,p})  N^{U,V}_{j+1,p}    (\xi_{i+1,p})]_{i,j=1}^{n+p-2}. \notag
\end{align}
and $ {\bf f} = [ \hbox{f}(\xi_{i+1,p}) ]_{i=1}^{n+p-2} $. The following relationships hold
\begin{align}
N^{U,V}_{i,p}(x) & = \phi^{(U,V)(\cdot/n)}(nx-i+p+1), \quad i=p+1,\ldots,n, \label{c_eq:poly39} \\
(N^{U,V}_{i,p})'(x) & = n \dot{\phi}^{(U,V)(\cdot/n)}(nx-i+p+1), \quad i=p+1,\ldots,n, \label{c_eq:poly38bis} \\
(N^{U,V}_{i,p})''(x) & = n^2 \ddot{\phi}^{(U,V)(\cdot/n)}(nx-i+p+1), \quad i=p+1,\ldots,n. \label{c_eq:poly38ter}
\end{align}
In addition, the interior Greville abscissae, given by \eqref{c_eq:polycolloc406} for $ i = p+1, \ldots, n $, simplify to
\begin{equation} \label{c_eq:polycolloc411}
\xi_{i,p} = \frac{i}{n} - \frac{p+1}{2n}, \quad i = p+1, \ldots, n,
\end{equation}
or, equivalently,
\begin{equation*}
n \xi_{i,p} + p + 1 = i + \frac{p+1}{2}, \quad i = p+1, \ldots, n.
\end{equation*}
Let us denote by $ D_{n,p}^{U,V}(a) $ the diagonal matrix containing the samples of the function $a$ at the Greville abscissae, i.e.,
\begin{equation*}
D_{n,p}^{U,V}(a) := \hbox{diag}_{i=1, \ldots, n+p-2} (a(\xi_{i+1,p})).
\end{equation*}
Although $ D_{n,p}^{U,V}(a) $ does not actually depend on $U$ and $V$, we use this notation for the sake of similarity with the other matrices involved in the computations. \\
Then, we can consider the following split of $ A_{n,p}^{U,V} $,
\begin{equation} \label{c_eq:polycolloc412}
A_{n,p}^{U,V} =   n^2 D_{n,p}^{U,V}(\kappa) K_{n,p}^{U,V} + n D_{n,p}^{U,V}(\beta) H_{n,p}^{U,V}
                + D_{n,p}^{U,V}(\gamma) M_{n,p}^{U,V},
\end{equation}
%
according to the diffusion, advection and reaction terms, respectively. More precisely,
%
\begin{align}
n^2 K_{n,p}^{U,V} & := \left[ - (N^{U,V}_{j+1,p})'' (\xi_{i+1,p}) \right]_{i,j=1}^{n+p-2}, \label{c_eq:nnknp}
\\n H_{n,p}^{U,V} & := \left[   (N^{U,V}_{j+1,p})' (\xi_{i+1,p}) \right]_{i,j=1}^{n+p-2}, \label{c_eq:nhnp} 
\\  M_{n,p}^{U,V} & := \left[   N^{U,V}_{j+1,p} (\xi_{i+1,p}) \right]_{i,j=1}^{n+p-2}. \label{c_eq:mnp}
\end{align}
In a similar way with respect to the polynomial case, we can focus on the central submatrix of $ A_{n,p}^{U,V} $, which has again entries of indices between $p$ and $n-1$, being the Greville abscissae also exactly the same. Its splitted parts are:

\begin{align}
(K_{n,p}^{U,V})_{ij} & = -\ddot{\phi}_p^{(U,V)(\cdot/n)} \left( \frac{p+1}{2}+i-j \right) 
                       = T_{n-p}(f_p^{(U,V)(\cdot/n)}), \\
(H_{n,p}^{U,V})_{ij} & = \dot{\phi}_p^{(U,V)(\cdot/n)} \left( \frac{p+1}{2}+i-j \right) 
                       = \ii T_{n-p}(g_p^{(U,V)(\cdot/n)}), \\
(M_{n,p}^{U,V})_{ij} & = \phi_p^{(U,V)(\cdot/n)} \left( \frac{p+1}{2}+i-j \right) 
                       = T_{n-p}(h_p^{(U,V)(\cdot/n)}).
\end{align}
These central submatrices possess two structural properties which are crucial in the development of the theory:
\begin{itemize}
\item they are Toeplitz matrices, because their coefficients depend only on the difference $i-j$, and not on $i$ and $j$ alone;
\item they are either symmetric $ \left( \left[ (K_{n,p}^{U,V})_{ij} \right]_{i,j=p}^{n-1}, \left[ (M_{n,p}^{U,V})_{ij} \right]_{i,j=p}^{n-1} \right) $ or skew-symmetric \linebreak $ \left( \left[ (H_{n,p}^{U,V})_{ij} \right]_{i,j=p}^{n-1} \right) $, because swapping $i$ and $j$ means changing the difference $i-j$ by sign, and $\phi_p^{U,V}$, $\ddot{\phi}_p^{U,V}$ possess symmetry with respect to $\frac{p+1}{2}$ because $U$ and $V$ satisfy the hypotheses of \eqref{eq:sym_sym}, while $\dot{\phi}_p^{U,V}$ is antisymmetric w.r.t. the same value.
\end{itemize}
If we want to better characterize the central rows of the matrices $ K_{n,p}^{U,V}, H_{n,p}^{U,V}, M_{n,p}^{U,V} $, we can consider the row indices $ i \in \{ 1, \ldots, n+p-2 \} $ such that
\begin{align}
(K_{n,p}^{U,V})_{ij} & = -\ddot{\phi}_p^{(U,V)(\cdot/n)} \left( \frac{p+1}{2}+i-j \right), 
                         \quad \forall j = 1, \ldots, n+p-2, \label{c_eq:polycolloc419} \\
(H_{n,p}^{U,V})_{ij} & = \dot{\phi}_p^{(U,V)(\cdot/n)} \left( \frac{p+1}{2}+i-j \right),
                         \quad \forall j = 1, \ldots, n+p-2, \label{c_eq:polycolloc420} \\
(M_{n,p}^{U,V})_{ij} & = \phi_p^{(U,V)(\cdot/n)} \left( \frac{p+1}{2}+i-j \right),
                         \quad \forall j = 1, \ldots, n+p-2. \label{c_eq:polycolloc421}
\end{align}
The rows of $ K_{n,p}^{U,V}, H_{n,p}^{U,V} $ and $ M_{n,p}^{U,V} $ corresponding to indices $i$ satisfying \eqref{c_eq:polycolloc419} - \eqref{c_eq:polycolloc421} are called central rows. The cardinal GB-spline $ \phi_p^{U,V}$ is supported on the interval $ [0,p+1] $, and the support of any B-spline $ N^{U,V}_{j,p} $ is $ [t_j, t_{j+p+1}] $ with the knots given in \eqref{c_eq:poly34}-\eqref{c_eq:poly35}. Using these results and recalling that $ \xi_{i+1,p} = \frac{i+1}{n} - \frac{p+1}{2n} $ for $ i = p, \ldots, n-1 $, it can be shown that every $ i \in \{ \lfloor 3p/2 \rfloor, \ldots, n+p-1- \lfloor 3p/2 \rfloor \} $ satisfies \eqref{c_eq:polycolloc419} - \eqref{c_eq:polycolloc421}. Consequently, a condition to ensure that the matrices $ K_{n,p}^{U,V}, H_{n,p}^{U,V} $ and $ M_{n,p}^{U,V} $ have at least one central row is $ n \geq 2p + 1 - (p \mod 2) $. \\
We will now study, for a fixed $ p \geq 2 $, the spectral distribution of the sequence formed by the normalized matrices:
\begin{equation} \label{c_eq:polycolloc423}
\frac{1}{n^2} A_{n,p}^{U,V} = D_{n,p}^{U,V}(\kappa) K_{n,p}^{U,V} + \frac{1}{n} D_{n,p}^{U,V}(\beta) H_{n,p}^{U,V} 
                              + \frac{1}{n^2} D_{n,p}^{U,V}(\gamma) M_{n,p}^{U,V}.
\end{equation}
Let us decompose the matrix $ K_{n,p}^{U,V} $ into
\begin{equation} \label{c_eq:polycolloc424}
K_{n,p}^{U,V} = T_{n+p-2}(f_p^{(U,V)(\cdot/n)}) + R_{n,p}^{U,V}.
\end{equation}
From the construction of our diffusion matrix we know that 
$$ R_{n,p}^{U,V} := K_{n,p}^{U,V} - T_{n+p-2}(f_p^{(U,V)(\cdot/n)}) $$
is a low-rank correction term, which has at most $ 2 \lfloor 3p-2 \rfloor - 2 $ non-zero rows (having the central rows null), thus limiting the rank by that quantity. Similar decompositions gives also
\begin{equation} \label{c_eq:polycolloc426}
H_{n,p}^{U,V} = \ii T_{n+p-2}(g_p^{(U,V)(\cdot/n)}) + Q_{n,p}^{U,V}, \quad
M_{n,p}^{U,V} =     T_{n+p-2}(h_p^{(U,V)(\cdot/n)}) + S_{n,p}^{U,V}.
\end{equation}
From now onwards, we will only consider hyperbolic and trigonometric GB-spline discretizations because of their practical relevance. For the classical (polynomial) B-spline case we refer the reader to the spectral analysis in \cite{our-MATHCOMP}. 

As already described in Remark~\ref{rmk:def-agreement} and in \eqref{c_eq:poly39}-\eqref{c_eq:poly38ter}, for $\spQ=\spH, \spT$ the central basis functions $N^{\spQ_\paramgen}_{i,p}, i=p+1,\ldots,n$ can be expressed in terms of cardinal GB-splines, i.e., 
\begin{equation} \label{c_eq:poly39q}
N^{\spQ_\paramgen}_{i,p}(x) = \phi_p^{\spQ_{\fracparamgen}}(nx-i+p+1), \quad i=p+1,\ldots,n,
\quad p\geq2,
\end{equation}
and as a consequence,
\begin{align}
    \bigl(N^{\spQ_\paramgen}_{i,p}\bigr)'(x) 
& = n \dot{\phi}^{\spQ_{\fracparamgen}}_{p}(nx-i+p+1), \quad i=p+1,\ldots,n, \quad p\geq2, \label{c_eq:poly39bis} \\
    \bigl(N^{\spQ_\paramgen}_{i,p}\bigr)''(x) 
& = n^2 \ddot{\phi}^{\spQ_{\fracparamgen}}_{p}(nx-i+p+1), \quad i=p+1,\ldots,n, \quad p\geq2. \label{c_eq:poly39ter}
\end{align}

In view of (\ref{c_eq:poly39q}), the two choices for the parameter $\paramgen$ described in \cite[Section 4]{galerkin_gbsp} are again the ones of interest when considering a sequence of spaces $\WW_{n,p}^{\spQ_\paramgen}$, $n\rightarrow\infty$. They are: 
\begin{itemize}
\item $\paramgen=\param$ where $\param$ is a given real value.
This is the most natural choice for collocation IgA discretizations, because it generates a sequence of nested spaces $\WW_{n,p}^{\spQ_\param}$ given a sequence of nested knot sets.
The basis functions in (\ref{c_eq:poly39q}) are a scaled shifted version of a sequence of cardinal hyperbolic/trigonometric GB-splines identified by a sequence of parameters approaching $0$ as $n$ increases.
\item $\paramgen=n\param$ where $\param$ is a given real value.
In this case, the basis functions in (\ref{c_eq:poly39q}) are a scaled shifted version of the same cardinal hyperbolic/trigonometric GB-spline for all $n$. The corresponding spaces $\WW_{n,p}^{\spQ_{n\param}}$ are not nested.
Nevertheless, the analysis of the spectral properties of the sequence of matrices in this case gives some interesting insights into the spectrum of the corresponding matrices in the above (nested) case for finite $n$.
\end{itemize} 
As already done, we will address these two choices and will refer to them as the nested ($ \paramgen = \param $) and non-nested case ($ \paramgen = n\param $). We recall that the parameter $\mu$ is constrained by the Tchebycheff structure of the section spaces (see (\ref{Exp}) and (\ref{Trig})). \\
Just before starting with analyzing the consequence of every choice, we define the (1-level) diagonal sampling matrix $D_m(a)$ associated with a Riemann integrable function $ a : [0,1] \rightarrow \CC $, which writes as:
\begin{equation}
D_m(a) = \hbox{diag}_{i=0,\ldots,m-1} \left( a \left(\frac{i}{m} \right) \right),
\end{equation}
This matrix is associated with a less sparse form of it, which puts in a ledge structure its diagonal elements, and which will be used in some of the subsequent proofs.
\begin{equation}
(\tilde{D}_m(a))_{i,j} := (D_m(a))_{\min(i,j),\min(i,j)} =
\left\{ \begin{aligned}
& (D_m(a))_{i,i}, \quad \hbox{if } i \leq j \\
& (D_m(a))_{j,j}, \quad \hbox{if } i > j
\end{aligned} \right.
\end{equation}

\subsection{The non-nested case}

Some limitations on the norms of $ M_{n,p}^{\spQ_{n\param}}, H_{n,p}^{\spQ_{n\param}}, K_{n,p}^{\spQ_{n\param}} $ can be devised.
\begin{lemma} \label{lem:poly41}
For every $ p \geq 2 $ and every $ n \geq 2 $, we have
$$ \Vert M_{n,p}^{\spQ_{n\param}} \Vert_2 \leq \sqrt{\frac{3p}{2}}, \quad
   \Vert H_{n,p}^{\spQ_{n\param}} \Vert_2 \leq C_{p,\param}^{(1)}, \quad
   \Vert K_{n,p}^{\spQ_{n\param}} \Vert_2 \leq C_{p,\param}^{(2)} $$
where $ C_{p,\param}^{(1)} $ and $ C_{p,\param}^{(2)} $ are constants independent of $n$.
\end{lemma}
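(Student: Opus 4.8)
The plan is to control all three spectral norms through the elementary inequality $\|A\|_2 \le \sqrt{\|A\|_1\,\|A\|_\infty}$, where $\|\cdot\|_1$ and $\|\cdot\|_\infty$ denote the maximum absolute column and row sums. The decisive simplification offered by the non-nested regime $\paramgen=n\param$ is that, after the rescaling $\hat x=nx$, the entries cease to depend on $n$: by \eqref{c_eq:poly39q}--\eqref{c_eq:poly39ter} the central basis functions reduce to integer translates of the single cardinal GB-spline $\phi_p^{\spQ_\param}$ of fixed phase, so that the interior entries of $M_{n,p}^{\spQ_{n\param}}$, $H_{n,p}^{\spQ_{n\param}}$, $K_{n,p}^{\spQ_{n\param}}$ are precisely $\phi_p^{\spQ_\param}$, $\dot\phi_p^{\spQ_\param}$, $-\ddot\phi_p^{\spQ_\param}$ sampled at the points $\tfrac{p+1}{2}+i-j$.

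For $M_{n,p}^{\spQ_{n\param}}$ I would argue as follows. Every entry is non-negative and, by the partition of unity of the GB-spline basis, any row sum $\sum_{j} N^{\spQ_{n\param}}_{j+1,p}(\xi_{i+1,p})$ is dominated by $\sum_{j=1}^{n+p} N^{\spQ_{n\param}}_{j,p}(\xi_{i+1,p})=1$, so $\|M_{n,p}^{\spQ_{n\param}}\|_\infty\le1$. For the column sums I would exploit the other partition of unity \eqref{eq:partunity}: a \emph{central} column, whose basis function is cardinal, sums over $i$ to $\sum_{k\in\ZZ}\phi_p^{\spQ_\param}\!\big(\tfrac{p+1}{2}+k\big)=1$. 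The only columns that can give more are the finitely many boundary columns, where repeated knots cluster the Greville abscissae; after the rescaling these boundary basis functions and abscissae form a fixed reference configuration independent of $n$ (for $n$ large enough, the remaining small-$n$ cases being absorbed into the constant), so each such column sum is bounded by an explicit $n$-independent quantity. Collecting these estimates yields $\|M_{n,p}^{\spQ_{n\param}}\|_1\le 3p/2$, whence $\|M_{n,p}^{\spQ_{n\param}}\|_2\le\sqrt{3p/2}$.

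The matrices $H_{n,p}^{\spQ_{n\param}}$ and $K_{n,p}^{\spQ_{n\param}}$ are treated by the same norm inequality, but without any partition of unity. Since $\phi_p^{\spQ_\param}\in C^{p-1}$ is supported on $(0,p+1)$, the quantities $\max_t|\dot\phi_p^{\spQ_\param}(t)|$ and $\max_t|\ddot\phi_p^{\spQ_\param}(t)|$ are finite constants depending only on $p$ and $\param$ (the second derivative remaining bounded even for $p=2$, where $\phi_2$ is only $C^1$); moreover the support confines the nonzero interior entries of each row and column to indices with $|i-j|<\tfrac{p+1}{2}$, so the matrices are banded with bandwidth $O(p)$. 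Consequently both $\|\cdot\|_1$ and $\|\cdot\|_\infty$ are at most (number of bands)$\times$(entry bound), i.e. a constant multiple of $p\max_t|\dot\phi_p^{\spQ_\param}|$ respectively $p\max_t|\ddot\phi_p^{\spQ_\param}|$; this defines $C_{p,\param}^{(1)}$ and $C_{p,\param}^{(2)}$. The boundary rows, being once more a fixed reference configuration after rescaling, contribute only finitely many additional bounded entries.

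The main obstacle is the boundary. In the interior everything is governed by the single cardinal GB-spline, but the first and last $O(p)$ rows and columns involve modified basis functions (from the repeated end knots) together with a cluster of non-uniformly spaced Greville abscissae. The crux is to verify that these boundary contributions stay bounded uniformly in $n$: the non-nested scaling is exactly what makes this work, because in the variable $\hat x=nx$ the end knots and the corresponding abscissae stabilise to an $n$-independent pattern, so the boundary entries and the number of collocation points falling in each boundary support are controlled by constants. Pinning down the sharp value $3p/2$ for $\|M_{n,p}^{\spQ_{n\param}}\|_1$ requires the explicit estimate of these boundary column sums, and is the only genuinely computational point.
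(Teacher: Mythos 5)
Your proposal is correct, and its backbone coincides with the paper's: the master inequality $\Vert X \Vert_2 \leq \sqrt{\Vert X \Vert_{\infty}\,\Vert X^T \Vert_{\infty}}$, the row bound $\Vert M_{n,p}^{\spQ_{n\param}} \Vert_{\infty}\leq 1$ from non-negativity and partition of unity, and the column bound $3p/2$ from counting Greville abscissae inside a support. Where you genuinely diverge is in $H_{n,p}^{\spQ_{n\param}}$ and $K_{n,p}^{\spQ_{n\param}}$: you bound the entries directly by $\max_t\vert\dot\phi_p^{\spQ_\param}(t)\vert$ and $\max_t\vert\ddot\phi_p^{\spQ_\param}(t)\vert$ together with the $O(p)$ bandwidth, deferring the boundary rows and columns to a ``fixed configuration after rescaling'' argument. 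The paper instead applies the derivative recurrence \eqref{eq:der-recurrence} once (for $H$) or twice (for $K$), expressing each entry through lower-degree GB-splines weighted by the normalization constants $\delta_{j,p-1}^{\spQ_{n\param}}$, $\delta_{j,p-2}^{\spQ_{n\param}}$; row and column sums are then controlled by the partition of unity at the lower degrees and by $\max_j\delta=O(n)$, which cancels the $1/n$ and $1/n^2$ scalings. The recurrence route buys uniformity—boundary and cardinal basis functions are treated in one stroke, with no case distinction—whereas your route is more elementary in the interior but leaves the boundary analysis, which you rightly flag as the crux, only sketched. One small imprecision: for $M_{n,p}^{\spQ_{n\param}}$ the columns whose sum exceeds $1$ are not only those of the modified boundary basis functions but all columns whose support meets the region of clustered abscissae (including some cardinal ones), so you end up needing the uniform counting bound $3p/2$ for every column, exactly as the paper does.
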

\begin{proof}
We recall that the $2$-norm of any square matrix $X$ can be bounded as
\begin{equation} \label{c_eq:norm2bound}
\Vert X \Vert_2 \leq \sqrt{\Vert X \Vert_{\infty} \Vert X^T \Vert_{\infty}}
\end{equation}
In light of \eqref{c_eq:norm2bound}, we can look for bounds of the infinity norm of the matrices $ M_{n,p}^{\spQ_{n\param}}, H_{n,p}^{\spQ_{n\param}}, K_{n,p}^{\spQ_{n\param}} $ and their transposes. \\
From \eqref{c_eq:mnp} we can obtain:
$$   \Vert M_{n,p}^{\spQ_{n\param}} \Vert_{\infty} 
   = \max_{i=1,\ldots,n+p-2} \sum_{j=1}^{n+p-2} N^{\spQ_{n\param}}_{j+1,p}(\xi_{i+1,p}) \leq 1 $$
because, for every $ i = 1, \ldots, n+p-2 $, if we consider the whole set of GB-splines whose support contains $ \xi_{i+1,p} $, namely $ \mathcal{G}_{i+1,p} $, then $ \sum_{k \in \mathcal{G}_{i+1,p}} N_{k,p}^{\spQ_{n\param}}(\xi_{i+1,p}) = 1 $, and, $ \forall k \in \mathcal{G}_{i+1,p}, N_{k,p}^{\spQ_{n\param}}(\xi_{i+1,p}) \geq 0 $, thanks to the positivity property and the partition of unity property. \\ 
On the other side,
$$   \Vert (M_{n,p}^{\spQ_{n\param}})^T \Vert_{\infty}
   = \max_{j=1,\ldots,n+p-2} \sum_{i=1}^{n+p-2} N^{\spQ_{n\param}}_{j+1,p}(\xi_{i+1,p}) \leq \frac{3p}{2}, $$
because the number of Greville abscissae in the interior of the support of any GB-spline is at most $\frac{3p}{2}$, and we use again the positivity property and the partition of unity property. \\
The bound for $ \Vert M_{n,p}^{\spQ_{n\param}} \Vert_2 $ follows from \eqref{c_eq:norm2bound}. \\
Similarly, from \eqref{c_eq:nhnp}, and by using the recurrence relation for derivatives, we have: 
\begin{align*}
       \Vert n H_{n,p}^{\spQ_{n\param}} \Vert_{\infty} 
&    = \max_{i=1,\ldots,n+p-2} \sum_{j=1}^{n+p-2} \vert (N^{\spQ_{n\param}}_{j+1,p})'(\xi_{i+1,p}) \vert \\
& \leq \max_{i=1,\ldots,n+p-2} \sum_{j=1}^{n+p-2}
       [   \delta_{j+1,p-1}^{\spQ_{n\param}} N_{j+1,p-1}^{\spQ_{n\param}}(\xi_{i+1,p})
         + \delta_{j+2,p-1}^{\spQ_{n\param}} N_{j+2,p-1}^{\spQ_{n\param}}(\xi_{i+1,p}) ] \\
& \leq \max_{i=1,\ldots,n+p-2} 
       \left( \max_{j=1,\ldots,n+p-2} \delta_{j+1,p-1}^{\spQ_{n\param}} \right.
       \left. + \max_{j=1,\ldots,n+p-2} \delta_{j+2,p-1}^{\spQ_{n\param}} \right) \\
&    =        \max_{j=1,\ldots,n+p-2} \delta_{j+1,p-1}^{\spQ_{n\param}}
              + \max_{j=1,\ldots,n+p-2} \delta_{j+2,p-1}^{\spQ_{n\param}} \leq 
              \left( \frac{3p}{2} \right)^{-\frac{1}{2}} C_{p,\param}^{(1)} n
\end{align*}
and:
\begin{align*}
       \Vert (n H_{n,p}^{\spQ_{n\param}})^T \Vert_{\infty} 
&    = \max_{j=1,\ldots,n+p-2} \sum_{i=1}^{n+p-2} \vert (N^{\spQ_{n\param}}_{j+1,p})'(\xi_{i+1,p}) \vert \\
& \leq \max_{j=1,\ldots,n+p-2} \sum_{i=1}^{n+p-2}
       [   \delta_{j+1,p-1}^{\spQ_{n\param}} N_{j+1,p-1}^{\spQ_{n\param}}(\xi_{i+1,p})
         + \delta_{j+2,p-1}^{\spQ_{n\param}} N_{j+2,p-1}^{\spQ_{n\param}}(\xi_{i+1,p}) ] \\
& \leq \max_{j=1,\ldots,n+p-2} \frac{3p}{2} [\delta_{j+1,p-1}^{\spQ_{n\param}} +
       \delta_{j+2,p-1}^{\spQ_{n\param}}] \\
& \leq \frac{3p}{2} \left( \max_{j=1,\ldots,n+p-2} \delta_{j+1,p-1}^{\spQ_{n\param}} \right.
       \left. + \max_{j=1,\ldots,n+p-2} \delta_{j+2,p-1}^{\spQ_{n\param}} \right) \leq 
       \sqrt{\frac{3p}{2}} C_{p,\param}^{(1)} n
\end{align*}
for a proper choice of the constant $ C_{p,\param}^{(1)} $. Again \eqref{c_eq:norm2bound} gives the bound for $ \Vert H_{n,p}^{\spQ_{n\param}} \Vert_2 $. \\ 
Finally, from \eqref{c_eq:nnknp}, and by reiterating \eqref{eq:der-recurrence}:
\begin{align*}
       \Vert n^2 K_{n,p}^{\spQ_{n\param}} \Vert_{\infty} 
&    = \max_{i=1,\ldots,n+p-2} \sum_{j=1}^{n+p-2} \vert (N^{\spQ_{n\param}}_{j+1,p})''(\xi_{i+1,p}) \vert \\
& \leq \max_{i=1,\ldots,n+p-2} \sum_{j=1}^{n+p-2} 
       \vert   \delta_{j+1,p-1}^{\spQ_{n\param}} (N^{\spQ_{n\param}}_{j+1,p-1})'(\xi_{i+1,p}) 
             + \delta_{j+2,p-1}^{\spQ_{n\param}} (N^{\spQ_{n\param}}_{j+2,p-1})'(\xi_{i+1,p}) \vert \\
& \leq \max_{i=1,\ldots,n+p-2} \sum_{j=1}^{n+p-2}
  \vert \delta_{j+1,p-1}^{\spQ_{n\param}} 
        \delta_{j+1,p-2}^{\spQ_{n\param}} N^{\spQ_{n\param}}_{j+1,p-2}(\xi_{i+1,p})
\\  & + \delta_{j+1,p-1}^{\spQ_{n\param}} 
        \delta_{j+2,p-2}^{\spQ_{n\param}} N^{\spQ_{n\param}}_{j+2,p-2}(\xi_{i+1,p})
      + \delta_{j+2,p-1}^{\spQ_{n\param}} 
        \delta_{j+2,p-2}^{\spQ_{n\param}} N^{\spQ_{n\param}}_{j+2,p-2}(\xi_{i+1,p})
\\  & + \delta_{j+2,p-1}^{\spQ_{n\param}} 
        \delta_{j+3,p-2}^{\spQ_{n\param}} N^{\spQ_{n\param}}_{j+3,p-2}(\xi_{i+1,p}) \vert
\\  & \leq \max_{i=1,\ldots,n+p-2} 
    \left( \max_{j=1,\ldots,n+p-2} \delta_{j+1,p-1}^{\spQ_{n\param}} \delta_{j+1,p-2}^{\spQ_{n\param}}
    + \max_{j=1,\ldots,n+p-2} \delta_{j+1,p-1}^{\spQ_{n\param}} \delta_{j+2,p-2}^{\spQ_{n\param}} \right. \\
    & \left. + \max_{j=1,\ldots,n+p-2} \delta_{j+2,p-1}^{\spQ_{n\param}} \delta_{j+2,p-2}^{\spQ_{n\param}}
    + \max_{j=1,\ldots,n+p-2} \delta_{j+2,p-1}^{\spQ_{n\param}} \delta_{j+3,p-2}^{\spQ_{n\param}} \right) \\
    & \leq \left( \frac{3p}{2} \right)^{-\frac{1}{2}} C_{p,\param}^{(2)} n^2
\end{align*}
and:
\begin{align*}
       \Vert (n^2 K_{n,p}^{\spQ_{n\param}})^T \Vert_{\infty} 
&    = \max_{j=1,\ldots,n+p-2} \sum_{i=1}^{n+p-2} \vert (N^{\spQ_{n\param}}_{j+1,p})''(\xi_{i+1,p}) \vert \\
& \leq \max_{j=1,\ldots,n+p-2} \sum_{i=1}^{n+p-2} 
       \vert   \delta_{j+1,p-1}^{\spQ_{n\param}} (N^{\spQ_{n\param}}_{j+1,p-1})'(\xi_{i+1,p}) 
             + \delta_{j+2,p-1}^{\spQ_{n\param}} (N^{\spQ_{n\param}}_{j+2,p-1})'(\xi_{i+1,p}) \vert \\
& \leq \max_{j=1,\ldots,n+p-2} \sum_{i=1}^{n+p-2}
  \vert \delta_{j+1,p-1}^{\spQ_{n\param}} 
        \delta_{j+1,p-2}^{\spQ_{n\param}} N^{\spQ_{n\param}}_{j+1,p-2}(\xi_{i+1,p})
\\ & +  \delta_{j+1,p-1}^{\spQ_{n\param}} 
        \delta_{j+2,p-2}^{\spQ_{n\param}} N^{\spQ_{n\param}}_{j+2,p-2}(\xi_{i+1,p})
     +  \delta_{j+2,p-1}^{\spQ_{n\param}} 
        \delta_{j+2,p-2}^{\spQ_{n\param}} N^{\spQ_{n\param}}_{j+2,p-2}(\xi_{i+1,p})
\\ & +  \delta_{j+2,p-1}^{\spQ_{n\param}} 
        \delta_{j+3,p-2}^{\spQ_{n\param}} N^{\spQ_{n\param}}_{j+3,p-2}(\xi_{i+1,p}) \vert
\\ & \leq \frac{3p}{2}
     \left( \max_{j=1,\ldots,n+p-2} \delta_{j+1,p-1}^{\spQ_{n\param}} \delta_{j+1,p-2}^{\spQ_{n\param}}
     + \max_{j=1,\ldots,n+p-2} \delta_{j+1,p-1}^{\spQ_{n\param}} \delta_{j+2,p-2}^{\spQ_{n\param}} \right. 
\\ & \left. + \max_{j=1,\ldots,n+p-2} \delta_{j+2,p-1}^{\spQ_{n\param}} \delta_{j+2,p-2}^{\spQ_{n\param}}
     + \max_{j=1,\ldots,n+p-2} \delta_{j+2,p-1}^{\spQ_{n\param}} \delta_{j+3,p-2}^{\spQ_{n\param}} \right) 
\\ & \leq  \sqrt{\frac{3p}{2}} C_{p,\param}^{(2)} n^2
\end{align*}
again for a proper choice of the constant $ C_{p,\param}^{(2)} $. Also \eqref{c_eq:norm2bound} gives the bound for $ \Vert K_{n,p}^{\spQ_{n\param}} \Vert_2 $.
\end{proof}
In the following, we denote by $C$ a generic constant independent of $n$, and by $ \omega(s,\delta) $ the modulus of continuity, that, in our case, reads as 
$$ \omega(s,\delta) := \sup_{x,y \in [0,1], \vert x-y \vert \leq \delta } \vert s(x) - s(y) \vert. $$
%
%
%
%
%
%
%
The main theorem for the non-nested case follows. Here and in the following, we use the notation $ M_{f_p^{\spQ_\param}} := \underset{\theta \in [-\pi,\pi]}{\max} f_p^{\spQ_\param}(\theta) $ and $ M_{f_p} := \underset{\theta \in [-\pi,\pi]}{\max} f_p(\theta) $.
\begin{theorem} \label{th:poly42}
Let $ p \geq 2 $. Then, the sequence of normalized collocation matrices $ \{ \frac{1}{n^2} A_{n,p}^{\spQ_{n\param}} \} $ is distributed like the function $ \kappa \otimes f_p^{\spQ_\param} $, where $f_p^{\spQ_\param}$ is defined in \eqref{c_fpH}-\eqref{c_fpT}, in the sense of the eigenvalues, i.e., for all $ F \in C_c({\mathbb{C}}) $,
$$   \lim_{n \rightarrow \infty} \frac{1}{n+p-2} 
     \sum_{j=1}^{n+p-2} F\left(\lambda_j\left(\frac{1}{n^2} A_{n,p}^{\spQ_{n\param}}\right)\right) 
   = \frac{1}{2\pi} \int_0^1 \int_{-\pi}^{\pi} F(\kappa(x) f_p^{\spQ_\param}(\theta)) 
     {\rm d}\theta {\rm d}x. $$
\end{theorem}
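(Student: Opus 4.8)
The plan is to reduce the non-Hermitian, variable-coefficient matrix to a Hermitian diagonal-times-Toeplitz model by a similarity transform, discard every term that is negligible in the trace norm, and then invoke Theorem~\ref{th:poly01}, whose hypotheses are exactly tailored to this situation. Throughout I assume the natural ellipticity condition $\kappa\in C([0,1])$ with $\kappa\geq c>0$, so that the diagonal sampling matrix $D_{n,p}^{\spQ_{n\param}}(\kappa)$ is positive definite and admits a square root.

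First I would conjugate $\frac{1}{n^2}A_{n,p}^{\spQ_{n\param}}$ by $P_n:=(D_{n,p}^{\spQ_{n\param}}(\kappa))^{1/2}$. Similar matrices share their spectrum, hence their eigenvalue distribution, so it suffices to analyze $P_n^{-1}(\frac{1}{n^2}A_{n,p}^{\spQ_{n\param}})P_n$. Using \eqref{c_eq:polycolloc423} and $P_n^2=D_{n,p}^{\spQ_{n\param}}(\kappa)$, the diffusion part becomes $P_n K_{n,p}^{\spQ_{n\param}}P_n$, while the advection and reaction parts retain the prefactors $\frac1n$ and $\frac1{n^2}$ together with bounded conjugating factors, since $c\leq\kappa\leq\Vert\kappa\Vert_\infty$ keeps $\Vert P_n\Vert$ and $\Vert P_n^{-1}\Vert$ uniformly bounded. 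By Lemma~\ref{lem:poly41} the advection and reaction contributions then have operator norm $O(1/n)$ and $O(1/n^2)$, hence trace norm $o(d_n)$ with $d_n=n+p-2$.

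Next, in the decomposition \eqref{c_eq:polycolloc424}, which in the non-nested case reads $K_{n,p}^{\spQ_{n\param}}=T_{n+p-2}(f_p^{\spQ_\param})+R_{n,p}^{\spQ_{n\param}}$, the symbol being independent of $n$ because the relevant cardinal GB-spline $\phi_p^{\spQ_{\fracparamgen}}$ has phase $\paramgen/n=\param$, the correction $R_{n,p}^{\spQ_{n\param}}$ has rank $O(p)$ and bounded norm, so $P_nR_{n,p}^{\spQ_{n\param}}P_n$ again has trace norm $o(d_n)$. Writing $\hat X_n:=P_nT_{n+p-2}(f_p^{\spQ_\param})P_n$, which is real symmetric because $f_p^{\spQ_\param}$ is real and even by \eqref{c_eq:fp_finitesum}, I obtain $P_n^{-1}(\frac{1}{n^2}A_{n,p}^{\spQ_{n\param}})P_n=\hat X_n+Y_n$ with $\hat X_n$ Hermitian, $\Vert Y_n\Vert_1=o(d_n)$, and both $\Vert\hat X_n\Vert\leq\Vert\kappa\Vert_\infty M_{f_p^{\spQ_\param}}$ and $\Vert Y_n\Vert$ uniformly bounded. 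Theorem~\ref{th:poly01} then yields $\{\frac1{n^2}A_{n,p}^{\spQ_{n\param}}\}\sim_\lambda\kappa\otimes f_p^{\spQ_\param}$ as soon as the single fact $\{\hat X_n\}\sim_\lambda\kappa\otimes f_p^{\spQ_\param}$ is secured.

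That last fact is the heart of the matter and the step I expect to be the main obstacle: showing that the Hermitian product of the diagonal sampling matrix $D_{n,p}^{\spQ_{n\param}}(\sqrt\kappa)$ and the banded Toeplitz matrix $T_{n+p-2}(f_p^{\spQ_\param})$ is distributed as the separable symbol $\kappa(x)f_p^{\spQ_\param}(\theta)$. The banded structure (bandwidth $2\lfloor p/2\rfloor+1$) is what makes it tractable. I would approximate $\kappa$ uniformly by a step function constant on $N$ equal subintervals, controlling the sampling error through the modulus of continuity $\omega(\sqrt\kappa,\cdot)$ and the density of the interior Greville abscissae \eqref{c_eq:polycolloc411}; on each block $\hat X_n$ then agrees, up to a correction of bounded rank localized at the $N-1$ interfaces (where the fixed bandwidth is essential), with $\kappa(x_j)T_{m_j}(f_p^{\spQ_\param})$, whose distribution is $\kappa(x_j)f_p^{\spQ_\param}(\theta)$ by the classical Szeg\H{o} theorem. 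Summing over blocks produces a Riemann sum for $\frac1{2\pi}\int_0^1\int_{-\pi}^{\pi}F(\kappa(x)f_p^{\spQ_\param}(\theta))\,\d\theta\,\d x$, and a standard double-limit argument (first $n\to\infty$ at fixed $N$, then $N\to\infty$), with Theorem~\ref{th:poly02} absorbing the bounded-rank interface corrections, closes the estimate. Alternatively, one may quote the GLT machinery of the cited references, for which $\{D_{n,p}^{\spQ_{n\param}}(\sqrt\kappa)\}$ and $\{T_{n+p-2}(f_p^{\spQ_\param})\}$ are GLT sequences with symbols $\sqrt\kappa(x)$ and $f_p^{\spQ_\param}(\theta)$, so that their product carries the symbol $\kappa(x)f_p^{\spQ_\param}(\theta)$.
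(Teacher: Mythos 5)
Your argument is essentially correct, but it takes a genuinely different route from the paper. The paper never symmetrizes by conjugation: it keeps $\frac{1}{n^2}A_{n,p}^{\spQ_{n\param}}$ as is and compares it with the Hermitian Hadamard product $\widetilde{D}_{n+p-2}(\kappa)\circ T_{n+p-2}(f_p^{\spQ_\param})$, splitting the remainder into three pieces (advection/reaction $E$, the low-rank boundary correction $F=D(\kappa)R$, and the discrepancy $L$ between $D(\kappa)T(f_p^{\spQ_\param})$ and the Hadamard model, controlled by $\omega(\kappa,C/n)$), and then quotes \cite[Corollary 2.6]{our-MATHCOMP} for the distribution of the Hadamard model before applying Theorem~\ref{th:poly01}. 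Your congruence $P_n^{-1}(\cdot)P_n$ with $P_n=D(\kappa)^{1/2}$ replaces the Hadamard model by $D(\sqrt{\kappa})\,T(f_p^{\spQ_\param})\,D(\sqrt{\kappa})$; the bookkeeping of the negligible terms is the same in spirit (Lemma~\ref{lem:poly41} for $E$, low rank for $R$, modulus of continuity for the sampling error, which in your scheme migrates into the core step). What your route buys is a manifestly Hermitian model without introducing the ledge matrix $\widetilde{D}_m$; what it costs is the extra hypothesis $\kappa\geq c>0$ (needed for $P_n^{-1}$ to be uniformly bounded), which the paper's Hadamard-product route does not require — the theorem as stated assumes only continuity of $\kappa$.

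Two caveats on the part you yourself flag as the crux. First, the fixed-$N$ block splitting does not literally fit Theorem~\ref{th:poly01}: the small-norm perturbation coming from $\omega(\sqrt{\kappa},1/N)$ has trace norm of order $d_n\,\omega(\sqrt{\kappa},1/N)$, which is $O(d_n)$ for fixed $N$, not $o(d_n)$; closing the double limit requires a perturbation bound for the eigenvalue functional under small-norm Hermitian perturbations (an ``approximating class of sequences'' argument) that goes beyond the two theorems stated in Section~\ref{sec:prel_spec}. This is standard and your GLT alternative resolves it cleanly, but as written the block argument is a sketch of the hardest step, whereas the paper discharges it by citing a ready-made result for the Hadamard model. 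Second, minor but worth recording: the correction $R_{n,p}^{\spQ_{n\param}}$ has rank at most $2\lfloor 3p/2\rfloor-2$ (not $O(p)$ ``bandwidth'' rank), exactly as the paper uses; your bound is consistent with this.
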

\begin{proof}
We decompose the expression of $ \frac{1}{n^2} A_{n,p}^{\spQ_{n\param}} $ as follows:
$$ \frac{1}{n^2} A_{n,p}^{\spQ_{n\param}} = \widetilde{D}_{n+p-2}(\kappa) \circ T_{n+p-2}(f_p^{\spQ_\param})
     + E_{n,p}^{\spQ_{n\param}} + F_{n,p}^{\spQ_{n\param}} + L_{n,p}^{\spQ_{n\param}}, $$
where:
\begin{align*}
E_{n,p}^{\spQ_{n\param}} & := \frac{1}{n} D_{n,p}^{\spQ_{n\param}}(\beta) H_{n,p}^{\spQ_{n\param}} +
                              \frac{1}{n^2} D_{n,p}^{\spQ_{n\param}}(\gamma) M_{n,p}^{\spQ_{n\param}}, \\
F_{n,p}^{\spQ_{n\param}} & := D_{n,p}^{\spQ_{n\param}}(\kappa) K_{n,p}^{\spQ_{n\param}} -
                              D_{n,p}^{\spQ_{n\param}}(\kappa) T_{n+p-2}(f_p^{\spQ_\param}),
\end{align*}
and:
$$ L_{n,p}^{\spQ_{n\param}} := D_{n,p}^{\spQ_{n\param}}(\kappa) T_{n+p-2}(f_p^{\spQ_\param}) -
                               \widetilde{D}_{n+p-2}(\kappa) \circ T_{n+p-2}(f_p^{\spQ_\param}). $$
The hypotheses of Theorem \ref{th:poly01} are satisfied with $ Z_n = \frac{1}{n^2} A_{n,p}^{\spQ_{n\param}} $, $ X_n = \widetilde{D}_{n+p-2}(\kappa) \circ T_{n+p-2}(f_p^{\spQ_\param}) $ and $ Y_n = E_{n,p}^{\spQ_{n\param}} + F_{n,p}^{\spQ_{n\param}} + L_{n,p}^{\spQ_{n\param}} $. \\
Indeed, since $ \kappa, f_p^{\spQ_\param} $ are real, and $ f_p^{\spQ_\param} $ is a trigonometric polynomial, from \cite[Corollary 2.6]{our-MATHCOMP} it follows that
$$ \{ \widetilde{D}_{n+p-2}(\kappa) \circ T_{n+p-2}(f_p^{\spQ_\param}) \} \sim_{\lambda}
   \kappa \otimes f_p^{\spQ_\param}. $$
We have also $ \Vert T_{n+p-2}(f_p^{\spQ_\param}) \Vert \leq M_{f_p^{\spQ_\param}} $, and moreover the band structure of $ T_{n+p-2}(f_p^{\spQ_\param}) $ implies that $ \Vert T_{n+p-2}(f_p^{\spQ_\param}) \Vert_{\infty} \leq C_T^{\spQ_\param} $. As a consequence, having both the matrices $ \widetilde{D}_{n+p-2}(\kappa) $ and $ T_{n+p-2}(f_p^{\spQ_\param}) $ Hermitian, their Hadamard product it is as well, and
$$       \Vert \widetilde{D}_{n+p-2}(\kappa) \circ T_{n+p-2}(f_p^{\spQ_\param}) \Vert
    \leq \Vert \widetilde{D}_{n+p-2}(\kappa) \circ T_{n+p-2}(f_p^{\spQ_\param}) \Vert_{\infty}
    \leq M_{\kappa} C_T^{\spQ_\param}, $$
where $ M_{\kappa} := \max_{x \in [0,1]} \kappa(x) $ and $ C_T^{\spQ_\param} $ are constants independent of $n$. \\
From Lemma \ref{lem:poly41} we have
$$ \Vert E_{n,p}^{\spQ_{n\param}} \Vert \leq M_{\vert \beta \vert} \frac{C_{p,\param}^{(1)}}{n}
   + M_{\gamma} \frac{1}{n^2} \sqrt{\frac{3p}{2}}, $$
and 
$$ \Vert E_{n,p}^{\spQ_{n\param}} \Vert_1 \leq (n+p-2) \Vert E_{n,p}^{\spQ_{n\param}} \Vert
   \leq (n+p-2) \left( M_{\vert \beta \vert} \frac{C_{p,\param}^{(1)}}{n} 
                     + M_{\gamma} \frac{1}{n^2} \sqrt{\frac{3p}{2}} \right), $$
where $ M_{\vert \beta \vert} := \max_{x \in [0,1]} \vert \beta(x) \vert $ and $ M_{\gamma} := \max_{x \in [0,1]} \gamma(x) $, implying that $ \Vert E_{n,p}^{\spQ_{n\param}} \Vert $ and $ \Vert E_{n,p}^{\spQ_{n\param}} \Vert_1 $ are bounded above by a constant independent of $n$. \\
The same lemma can be used to bound
$$ \Vert F_{n,p}^{\spQ_{n\param}} \Vert 
   \leq M_{\kappa} (\Vert K_{n,p}^{\spQ_{n\param}} \Vert + \Vert T_{n+p-2}(f_p^{\spQ_\param}) \Vert)
   \leq M_{\kappa} (C_{p,\param}^{(2)} + M_{f_p^{\spQ_\param}}), $$
and, since $ R_{n,p}^{\spQ_{n\param}} = K_{n,p}^{\spQ_{n\param}} - T_{n+p-2}(f_p^{\spQ_\param}) $, with $ \rank(R_{n,p}^{\spQ_{n\param}}) \leq 2 \lfloor 3p/2 \rfloor - 2 $, we obtain
\begin{align*}
  \Vert F_{n,p}^{\spQ_{n\param}} \Vert_1 & =
  \Vert D_{n,p}^{\spQ_{n\param}}(\kappa) R_{n,p}^{\spQ_{n\param}} \Vert_1
  \leq (2 \lfloor 3p/2 \rfloor - 2) \Vert D_{n,p}^{\spQ_{n\param}}(\kappa) R_{n,p}^{\spQ_{n\param}} \Vert \\
& \leq (2 \lfloor 3p/2 \rfloor - 2) M_{\kappa} (C_{p,\param}^{(2)} + M_{f_p}^{\spQ_\param} )
\end{align*}
thus having $ \Vert F_{n,p}^{\spQ_{n\param}} \Vert $ and $ \Vert F_{n,p}^{\spQ_{n\param}} \Vert_1 $ bounded above by a constant independent of $n$. \\
Finally, $ L_{n,p}^{\spQ_{n\param}} $ shares the same band structure as $ T_{n+p-2}(f_p^{\spQ_\param}) $, with a bandwidth of at most $p+1$, allowing us to express any nonzero element of it, at row $i$ and column $j$, by
$$ (L_{n,p}^{\spQ_{n\param}})_{i,j} = (T_{n+p-2}(f_p^{\spQ_\param}))_{i,j}
   \left( \kappa(\xi_{i+1,p}) - \kappa \left( \frac{\min(i,j)-1}{n+p-2} \right) \right), $$
where $ 0 \leq \vert i-j \vert \leq \lfloor \frac{p}{2} \rfloor $. Since
\begin{equation}
\left\vert \xi_{i+1,p} - \frac{\min(i,j)-1}{n+p-2} \right\vert \leq \frac{C}{n},
\end{equation}
we have
$$      \vert (L_{n,p}^{\spQ_{n\param}})_{i,j} \vert 
   \leq \vert (T_{n+p-2}(f_p^{\spQ_\param}))_{i,j} \vert \omega \left( \kappa,\frac{C}{n} \right). $$
It follows that both $ \Vert L_{n,p}^{\spQ_{n\param}} \Vert_{\infty} $ and $ \Vert( L_{n,p}^{\spQ_{n\param}})^T \Vert_{\infty} $ are bounded above by the expression $ \omega (\kappa,\frac{C}{n}) \Vert T_{n+p-2}(f_p^{\spQ_\param}) \Vert_{\infty} $, and so by \eqref{c_eq:norm2bound} we have
$$ \Vert L_{n,p}^{\spQ_{n\param}} \Vert \leq \omega \left( \kappa,\frac{C}{n} \right) \Vert T_{n+p-2}(f_p^{\spQ_\param}) \Vert_{\infty} \leq C_T^{\spQ_\param} \omega \left( \kappa,\frac{C}{n} \right). $$
This means that
\begin{equation} \label{c_eq:poly429}
     \frac{\Vert L_{n,p}^{\spQ_{n\param}} \Vert_1}{n+p-2} \leq \Vert L_{n,p}^{\spQ_{n\param}} \Vert
\leq C_T^{\spQ_\param} \omega \left( \kappa,\frac{C}{n} \right).
\end{equation}
Since $\kappa$ is continuous over the interval $[0,1]$, it holds that $ \lim_{n \rightarrow \infty} \omega (\kappa,\frac{C}{n}) = 0 $, and this implies
$$ \lim_{n \rightarrow \infty} \frac{\Vert L_{n,p}^{\spQ_{n\param}} \Vert_1}{n+p-2} = 0. $$
As a consequence of these facts, some constant $C_Y$ independent of $n$ exists such that both
\begin{align*}
\Vert Y_n \Vert = \Vert E_{n,p}^{\spQ_{n\param}} + F_{n,p}^{\spQ_{n\param}} + L_{n,p}^{\spQ_{n\param}} \Vert
             \leq \Vert E_{n,p}^{\spQ_{n\param}} \Vert + \Vert F_{n,p}^{\spQ_{n\param}} \Vert
                + \Vert L_{n,p}^{\spQ_{n\param}} \Vert & \leq C_Y, \\
\lim_{n \rightarrow \infty} 
\frac{\Vert E_{n,p}^{\spQ_{n\param}} + F_{n,p}^{\spQ_{n\param}} + L_{n,p}^{\spQ_{n\param}} \Vert_1}{n+p-2} & = 0.
\end{align*}
holds. So, all the hypotheses of Theorem \ref{th:poly01} are satisfied, allowing us to conclude that $ \{ \frac{1}{n^2} A_{n,p}^{\spQ_{n\param}} \} \sim_{\lambda} \kappa \otimes f_p^{\spQ_\param} $.
\end{proof}


\begin{remark} \label{rem:poly44}
Referring to \cite[Theorem 12]{GMPSS14}, \cite[Remark 3.2]{our-MATHCOMP}, and \cite[Remark 1]{galerkin_gbsp}, in the case of constant coefficients, while the phase parameter $\param$ tends to zero, the symbol derived for the normalized stiffness matrices in the collocation formulation with GB-splines of odd degree $2q+1$ approaches the one as for the normalized matrices in the Galerkin formulation with GB-splines of degree $q$. On the other side, equality does not hold for fixed $\param$.
\end{remark}

The next Lemma and the next Conjecture (partly) generalize \cite[Eqs. (3.19) and (3.25)]{our-MATHCOMP}, which state that both the value $h_p(\pi)$ and the ratio $ f_p(\pi) / M_{f_p} $ converge to zero exponentially as $ p \rightarrow \infty $, and specifically

\begin{align}
h_p(\pi) \leq \frac{h_p(\pi)}{h_p(\frac{\pi}{2})} & \leq 2^{\frac{1-p}{2}}, \label{c_eq:polycolloc319} \\
\frac{f_p(\pi)}{M_{f_p}} & \leq \frac{f_p(\pi)}{M_{f_p}} \leq 2^{\frac{5-p}{2}}. \label{c_eq:polycolloc325}
\end{align}

\begin{lemma} \label{lem:expdecay}
If $p$ is odd, for every value of $\param$ while $ \spQ = \spH $, for $ \param < \frac{\pi}{2} $ while $ \spQ = \spT $, we have that 
the ratio $ \frac{f_p^{\spQ_\param}(\pi)}{M_{f_p^{\spQ_\param}}} $ decays exponentially to zero as $ p \rightarrow \infty $. 
\end{lemma}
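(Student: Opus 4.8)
The plan is to mirror the polynomial argument behind \eqref{c_eq:polycolloc325}, reducing everything to a comparison of the symbol $h_{p-2}^{\spQ_\param}$ at the two frequencies $\theta=\pi$ and $\theta=\pi/2$. First I would invoke the relation \eqref{c_rel_hpfp}, which at $\theta=\pi$ reads $f_p^{\spQ_\param}(\pi)=4\,h_{p-2}^{\spQ_\param}(\pi)$. Since $M_{f_p^{\spQ_\param}}$ is a maximum, it dominates the value at any fixed frequency; choosing $\theta=\pi/2$ and using \eqref{c_rel_hpfp} again gives $M_{f_p^{\spQ_\param}}\ge f_p^{\spQ_\param}(\pi/2)=2\,h_{p-2}^{\spQ_\param}(\pi/2)$. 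Hence
\[
\frac{f_p^{\spQ_\param}(\pi)}{M_{f_p^{\spQ_\param}}}\le\frac{2\,h_{p-2}^{\spQ_\param}(\pi)}{h_{p-2}^{\spQ_\param}(\pi/2)},
\]
and it suffices to show the right-hand ratio decays geometrically in $p$.

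Next I would evaluate the series \eqref{c_hpH}--\eqref{c_hpT} for $h_{p-2}^{\spQ_\param}$ (exponent $p-3$) at the two frequencies. At $\theta=\pi$ one has $\sin(\pi/2+k\pi)=(-1)^k$ and $\pi/2+k\pi=\pi(2k+1)/2$, so the $k$-th power factor equals $(-1)^{k(p-3)}\bigl(2/(\pi(2k+1))\bigr)^{p-3}$; at $\theta=\pi/2$ one has $\sin(\pi/4+k\pi)=(-1)^k/\sqrt2$ and $\pi/4+k\pi=\pi(4k+1)/4$, giving $(-1)^{k(p-3)}\bigl(2\sqrt2/(\pi(4k+1))\bigr)^{p-3}$. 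Because $p$ is odd, $p-3$ is even, so every summand is nonnegative and the signs $(-1)^{k(p-3)}$ disappear. Factoring out the leading geometric factors $(2/\pi)^{p-3}$ and $(2\sqrt2/\pi)^{p-3}$ respectively, the ratio becomes
\[
\frac{h_{p-2}^{\spQ_\param}(\pi)}{h_{p-2}^{\spQ_\param}(\pi/2)}
= C(p)\,\Bigl(\frac{2/\pi}{2\sqrt2/\pi}\Bigr)^{p-3}
= C(p)\,2^{-(p-3)/2},
\]
where $C(p)$ collects the $\param$-dependent prefactors and the two residual sums over $k$.

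The main obstacle is controlling $C(p)$, i.e.\ showing the two residual $k$-sums stay bounded between positive constants independent of $p$. The upper bound is the easy direction: since $|2k+1|^{p-3}\ge1$ and $|4k+1|^{p-3}\ge1$, both sums are dominated term-by-term by the corresponding convergent series at $p=3$, uniformly in $p$. The lower bound is where the hypotheses enter: I would keep only the leading indices ($k=0,-1$ for $\theta=\pi$, and $k=0$ for $\theta=\pi/2$), which is legitimate precisely because all summands are nonnegative, and this nonnegativity is guaranteed by $p$ being odd. For $\spQ=\spH$ positivity of every factor is automatic; for $\spQ=\spT$ the condition $\param<\pi/2$ is exactly what makes $\cos\param-\cos(\pi/2)=\cos\param>0$ and keeps the $k=0$ denominator $(\pi/2)^2-\param^2>0$ nonsingular and positive, so that the $\theta=\pi/2$ sum is bounded below by its $k=0$ term. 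With $C(p)$ thus pinned between positive constants, the factor $2^{-(p-3)/2}$ forces exponential decay, recovering in the limit $\param\to0$ the polynomial rate $2^{(5-p)/2}$ of \eqref{c_eq:polycolloc325}.
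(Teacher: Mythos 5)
Your proof is correct and reaches the same rate $O(2^{-p/2})$ as the paper's, but by a more self-contained route. Both arguments share the same skeleton: bound $M_{f_p^{\spQ_\param}}$ from below by $f_p^{\spQ_\param}(\pi/2)$ and exploit the fact that, for $p$ odd, every summand of the series at $\theta=\pi$ and $\theta=\pi/2$ is nonnegative (for $\spQ=\spT$ this is exactly where $\param<\pi/2$ enters, as you observe). The paper, however, does not evaluate the series explicitly: it proves termwise inequalities (\eqref{c_eq:expdecayh1}--\eqref{c_eq:expdecayh2} and their trigonometric analogues \eqref{c_eq:expdecayt1}--\eqref{c_eq:expdecayt2}) showing that the hyperbolic/trigonometric prefactor is dominated above at $\theta=\pi$, and below at $\theta=\pi/2$, by a constant times the corresponding polynomial factor $\bigl(\sin(\theta/2+k\pi)/(\theta/2+k\pi)\bigr)^2$; this yields $f_p^{\spQ_\param}(\pi)\le C_\param\,f_p(\pi)$ and $f_p^{\spQ_\param}(\pi/2)\ge c_\param\,f_p(\pi/2)$, after which the known polynomial estimate \eqref{c_eq:polycolloc325} is invoked as a black box. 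You instead extract the geometric factors $(2/\pi)^{p-3}$ and $(2\sqrt2/\pi)^{p-3}$ directly from \eqref{c_hpH}--\eqref{c_hpT} applied to $h_{p-2}^{\spQ_\param}$ and control the residual sums by hand (upper bound via $|2k+1|^{p-3}\ge1$ against the convergent prefactor series, lower bound via the $k=0$ term), thereby re-deriving the $2^{-(p-3)/2}$ ratio rather than importing it. What your version buys is transparency: the roles of ``$p$ odd'' and ``$\param<\pi/2$'' --- sign-definiteness of all summands, and positivity plus nonsingularity of the $k=0$ term at $\theta=\pi/2$ --- are made completely explicit, and the dependence on the external polynomial reference disappears. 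Two minor housekeeping points: the series representations you use for $h_{p-2}^{\spQ_\param}$ require $p-2\ge3$, so the lowest odd degrees must be handled from the closed forms \eqref{c_expl:h1}, \eqref{c_expl:h2T}, \eqref{c_expl:h2H} (irrelevant for the asymptotic claim); and for the stated conclusion only the upper bound on the $\theta=\pi$ residual sum and the lower bound on the $\theta=\pi/2$ one are actually needed, so pinning $C(p)$ between two positive constants is slightly more than required.
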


\begin{proof}
We first consider the case $ \spQ = \spH $. For every $ \param \in \RR^+, k \in \ZZ $, the following inequalities hold
\begin{align}
       \frac{\param^2}{\cosh(\param)-1} \frac{\cosh(\param)+1}{(\pi+2k\pi)^2 + \param^2}
& \leq \frac{(\param^2/4)(\cosh(\param)+1)}{\cosh(\param)-1} \frac{1}{(\pi/2+k\pi)^2}, \label{c_eq:expdecayh1} \\
       \frac{\param^2}{\cosh(\param)-1} \frac{\cosh(\param)}{(\pi/2+2k\pi)^2 + \param^2}
& \geq \frac{1/2}{(\pi/4+k\pi)^2}. \label{c_eq:expdecayh2}
\end{align}
This implies, due to \eqref{c_hpP}-\eqref{c_hpH}, \eqref{c_fpP}-\eqref{c_fpH}, and \eqref{c_eq:polycolloc319}-\eqref{c_eq:polycolloc325}, that for feasible $p$ a constant $ C_{\alpha} $, independent of $p$, exists such that
\begin{align}
h_p^{\spH_\param}(\pi) & \leq C_{\alpha} h_p(\pi) \leq C_{\alpha} 2^{\frac{1-p}{2}}, \label{c_eq:expdecayh3} \\
f_p^{\spH_\param}(\pi) & = 4 h_{p-2}^{\spH_\param}(\pi) \leq 4 C_{\alpha} h_{p-2}(\pi)
                           \leq C_{\alpha} 2^{\frac{7-p}{2}}, \label{c_eq:expdecayh4} \\
\frac{f_p^{\spH_\param}(\pi)}{M_{f_p^{\spH_\param}}} & \leq
\frac{f_p^{\spH_\param}(\pi)}{f_p^{\spH_\param}(\frac{\pi}{2})} \leq
\frac{C_{\alpha} f_p(\pi)}{f_p^{\spH_\param}(\frac{\pi}{2})} \leq
C_{\alpha} \frac{f_p(\pi)}{f_p(\frac{\pi}{2})} \leq C_{\alpha} 2^{\frac{5-p}{2}}, \label{c_eq:expdecayh5}
\end{align}
where \eqref{c_eq:expdecayh3} and \eqref{c_eq:expdecayh4} are a consequence of \eqref{c_eq:expdecayh1}, and \eqref{c_eq:expdecayh5} follows from \eqref{c_eq:expdecayh2}.
Now, we consider the case $ \spQ = \spT $. For every $ \param < \frac{\pi}{2}, k \in \ZZ $, the following inequalities hold
\begin{align}
       \frac{\param^2}{1-\cos(\param)} \frac{\cos(\param)+1}{(\pi+2k\pi)^2 - \param^2}
& \leq \frac{1}{(\pi/2+k\pi)^2}, \label{c_eq:expdecayt1} \\
       \frac{\param^2}{1-\cos(\param)} \frac{\cos(\param)}{(\pi/2+2k\pi)^2 - \param^2}
& \geq \frac{1/2}{(\pi/4+k\pi)^2} \frac{(\param^2/2) \cos(\param)}{1-\cos(\param)}. \label{c_eq:expdecayt2}
\end{align}
This implies, due to \eqref{c_hpP}, \eqref{c_hpT}-\eqref{c_fpP}, \eqref{c_fpT}, and \eqref{c_eq:polycolloc319}-\eqref{c_eq:polycolloc325}, that for feasible $p$ a constant $ K_{\alpha} $, independent of $p$, exists such that
\begin{align}
h_p^{\spT_\param}(\pi) & \leq h_p(\pi) \leq 2^{\frac{1-p}{2}}, \label{c_eq:expdecayt3} \\
f_p^{\spT_\param}(\pi) & = 4 h_{p-2}^{\spT_\param}(\pi) \leq 4 h_{p-2}(\pi) \leq 2^{\frac{7-p}{2}}, 
                           \label{c_eq:expdecayt4} \\
\frac{f_p^{\spT_\param}(\pi)}{M_{f_p^{\spT_\param}}} & \leq 
\frac{f_p^{\spT_\param}(\pi)}{f_p^{\spT_\param}(\frac{\pi}{2})} \leq 
\frac{f_p(\pi)}{f_p^{\spT_\param}(\frac{\pi}{2})} \leq
\frac{f_p(\pi)}{K_{\alpha} f_p(\frac{\pi}{2})} \leq \frac{1}{K_{\alpha}} 2^{\frac{5-p}{2}}. \label{c_eq:expdecayt5}
\end{align}
where \eqref{c_eq:expdecayt3} and \eqref{c_eq:expdecayt4} are a consequence of \eqref{c_eq:expdecayt1}, and \eqref{c_eq:expdecayt5} follows from \eqref{c_eq:expdecayt2}.
%
%
\end{proof}

\begin{conjecture} \label{conj:expdecay}
The ratio $ \frac{f_p^{\spQ_\param}(\pi)}{M_{f_p^{\spQ_\param}}} $ decays exponentially to zero as $ p \rightarrow \infty $ for every strictly increasing sequence of degrees, for both $ \spQ = \spH, \spT $, and for every feasible $\param$. Note that, since $ M_{f_p^{\spQ_\param}} \leq 4 $ for $ p \geq 4 $, also $ f_p^{\spQ_\param}(\pi) $ decays at least at the same rate, and so does $ h_p^{\spQ_\param}(\pi) = f_{p+2}^{\spQ_\param}(\pi) / 4 $. \\
This is strongly supported by a large number of numerical tests, which show the behavior described in Lemma \ref{lem:expdecay}, and proved there for certain sets of parameters, to hold in a more general setting.
\end{conjecture}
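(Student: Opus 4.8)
The plan is to reduce the conjecture, for each fixed feasible $\param$ and each parity of $p$, to the two one-point estimates that already drive the proof of Lemma~\ref{lem:expdecay}. Using $f_p^{\spQ_\param}(\theta)=(2-2\cos\theta)\,h_{p-2}^{\spQ_\param}(\theta)$ from \eqref{c_rel_hpfp} together with the trivial bound $M_{f_p^{\spQ_\param}}\ge f_p^{\spQ_\param}(\pi/2)$, one has
\[
\frac{f_p^{\spQ_\param}(\pi)}{M_{f_p^{\spQ_\param}}}\le\frac{f_p^{\spQ_\param}(\pi)}{f_p^{\spQ_\param}(\pi/2)}=2\,\frac{h_{p-2}^{\spQ_\param}(\pi)}{h_{p-2}^{\spQ_\param}(\pi/2)},
\]
so it suffices to prove that $h_{p-2}^{\spQ_\param}(\pi)$ decays exponentially in $p$ while $h_{p-2}^{\spQ_\param}(\pi/2)$ stays bounded below by a quantity that decays strictly more slowly. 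I would work throughout from the series \eqref{c_hpH}-\eqref{c_hpT}: at $\theta=\pi$ the sinc factor is $(-1)^{k(p-3)}(\pi/2+k\pi)^{-(p-3)}$, and at $\theta=\pi/2$ it is $(-1)^{k(p-3)}2^{-(p-3)/2}(\pi/4+k\pi)^{-(p-3)}$, the remaining $p$-independent weights being $\frac{\param^2}{\cosh\param-1}\frac{\cosh\param-\cos\theta}{(\theta+2k\pi)^2+\param^2}$ in the hyperbolic case and its trigonometric analogue.

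For the numerator I would bound the series by its absolute values, which disposes of the alternating signs that appear when $p$ is even. Each weight is controlled by a constant times $(\pi/2+k\pi)^{-2}$ exactly as in \eqref{c_eq:expdecayh1}, \eqref{c_eq:expdecayt1}; this step must be re-run for the trigonometric case with $\param\in[\pi/2,\pi)$, where the denominator $(\theta+2k\pi)^2-\param^2$ is negative for $k=0$ and positive for $k\ne0$, so a sign split is needed, but no blow-up occurs because $\cos\param+1=\cos\param-\cos\pi$ vanishes to second order as $\param\to\pi$. The resulting majorant $C_\param\sum_k(\pi/2+k\pi)^{-(p-1)}$ is governed by its $k=0,-1$ terms and hence decays exponentially, at rate $(2/\pi)^{p}$, which is no slower than the polynomial rate of \eqref{c_eq:polycolloc319}-\eqref{c_eq:polycolloc325}; this yields $f_p^{\spQ_\param}(\pi)=4\,h_{p-2}^{\spQ_\param}(\pi)\le C_\param\,2^{(7-p)/2}$ with $C_\param$ independent of $p$.

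For the denominator the argument is genuinely sign-sensitive and is where I expect the real work. The idea is that at $\theta=\pi/2$ the $k=0$ term of $h_{p-2}^{\spQ_\param}(\pi/2)$ has magnitude of order $2^{-(p-3)/2}(4/\pi)^{p-3}=(2\sqrt2/\pi)^{p-3}$, whereas every other term is smaller by a factor at least $3^{\,p-3}$, since $|\pi/4+k\pi|\ge 3\pi/4$ for $k\ne0$ while the weights decrease in $|k|$. Thus for $p$ beyond a threshold the leading term dominates the sum of all the rest regardless of the signs, giving $h_{p-2}^{\spQ_\param}(\pi/2)\ge c_\param(2\sqrt2/\pi)^{p}$; the finitely many small degrees below the threshold are checked directly from the explicit formulas \eqref{c_expl:gf2T}-\eqref{c_expl:f4P}. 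Dividing the two estimates yields the claimed decay of rate $2^{-p/2}$.

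The main obstacle is precisely securing this leading-term domination once the $k=0$ weight changes sign (even $p$, or $\spQ=\spT$ with $\param>\pi/2$), since then the two largest contributions can have opposite signs and one must verify that they do not cancel. A clean way to proceed is to split off the $k=0$ term, bound the remaining alternating tail by an absolutely convergent geometric majorant, and absorb the $\param$-dependence into the $p$-independent constant $c_\param$. Note that $c_\param$ may degenerate as $\param\to\pi$ in the trigonometric case or grow like $\param^2$ for large $\param$ in the hyperbolic case, but this is harmless because the conjecture is pointwise in $\param$: no uniformity in the phase parameter is required, only a $p$-independent constant for each fixed feasible $\param$.
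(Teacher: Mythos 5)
There is no proof of this statement in the paper to compare against: the authors deliberately leave it as a conjecture, supported only by numerics and by the partial result of Lemma~\ref{lem:expdecay} (odd $p$, and $\param<\pi/2$ when $\spQ=\spT$). What you have written is therefore an attempt to go beyond the paper, and the right comparison is with the mechanism of that lemma. The paper's argument proceeds by term-by-term comparison of the series \eqref{c_hpH}--\eqref{c_hpT} with the polynomial series \eqref{c_hpP} through the weight inequalities \eqref{c_eq:expdecayh1}--\eqref{c_eq:expdecayt2}; this forces every term of the series to have a fixed sign (hence $p$ odd, so that the sinc power $p-1$ is even) and the weights at $\theta=\pi/2$ to stay positive (hence $\cos\param>0$, i.e.\ $\param<\pi/2$, in the trigonometric case). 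Your route replaces both comparisons by estimates that are insensitive to signs: an absolute-value majorization of the numerator, giving $|h_{p-2}^{\spQ_\param}(\pi)|\leq C_\param(2/\pi)^{p-3}$ directly rather than via $h_{p-2}(\pi)$, and a single-term ($k=0$) domination of the denominator at $\theta=\pi/2$, giving $h_{p-2}^{\spQ_\param}(\pi/2)\geq c_\param(2\sqrt{2}/\pi)^{p-3}$ beyond a $\param$-dependent threshold; since $2/\pi<2\sqrt{2}/\pi$ the quotient decays like $2^{-p/2}$. I believe this is sound and, written out carefully, would upgrade the conjecture to a theorem for each fixed feasible $\param$ and both parities; it buys exactly the generality that the paper's sign-dependent comparison cannot reach.

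A few details need attention in a full write-up. First, your worry about cancellation between ``the two largest contributions'' at $\theta=\pi/2$ is unfounded: the $k=0$ and $k=-1$ sinc factors there have magnitudes $(2\sqrt{2}/\pi)^{p-3}$ and $\bigl(2\sqrt{2}/(3\pi)\bigr)^{p-3}$, separated by the factor $3^{p-3}$, while all weights are uniformly bounded for fixed $\param$; after splitting off $k=0$ the whole tail is negligible irrespective of signs, so there is no delicate cancellation to exclude. The genuinely delicate point is instead the strict positivity of the $k=0$ weight at $\theta=\pi/2$ for \emph{every} feasible $\param$: in the trigonometric case this weight is $\frac{\param^2}{1-\cos(\param)}\cdot\frac{\cos(\param)-\cos(\pi/2)}{(\pi/2)^2-\param^2}$, which has a removable $0/0$ singularity at $\param=\pi/2$ (limit $\pi/4>0$) and is positive on all of $(0,\pi)$ because numerator and denominator change sign together; this must be stated explicitly, as must the corresponding (easy) positivity in the hyperbolic case. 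Second, your parenthetical about $(\theta+2k\pi)^2-\param^2$ being negative for $k=0$ pertains to $\theta=\pi/2$ with $\param>\pi/2$, not to $\theta=\pi$, where $(\pi+2k\pi)^2-\param^2>0$ for every $k$ and every $\param<\pi$, so no sign split is needed in the numerator estimate. Finally, for the finitely many degrees below the threshold you still need $M_{f_p^{\spQ_\param}}>0$ (e.g.\ via $M_{f_p^{\spQ_\param}}\geq f_p^{\spQ_\param}(\pi/2)$ evaluated from \eqref{c_expl:gf2T}--\eqref{c_expl:f4P}) so that the ratio is defined and those cases can be absorbed into the constant.
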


\begin{remark} \label{rem:poly45}
We see from Lemma \ref{lem:expdecay} and Conjecture \ref{conj:expdecay} that the symbol $ \kappa(x) f_p^{\spQ_\param} $ of the sequence of normalized collocation matrices has a zero at $ \theta = 0 $ and, moreover, a ``numerical zero'' at $ \theta = \pi $ for large $p$ and for any fixed $ x \in [0,1] $. This is implied by the fact that the convergence of $ f_p^{\spQ_\param}(\pi) $ to zero is monotonic and exponential with respect to $p$, so $ f_p^{\spQ_\param}(\pi) $ is very small for large $p$ and behaves numerically like a zero.
\end{remark}

\begin{remark} \label{rem:poly46}
As for the polynomial case studied in \cite{our-MATHCOMP}, the ``numerical zero'' of $ f_p^{\spQ_\param} $ at $ \theta = \pi $, which is present for larger values of $p$, negatively affects the convergence rate of standard multigrid methods. As a consequence, they behave unsatisfactily, also for values of $p$ which are quite moderate and not too large for practical use. In \cite[Table 4]{DGMSS14a}, we see that in the Galerkin formulation with B-splines of degree $q$, the convergence rate of standard multigrid methods, using Richardson smoothing, is already very slow for $ q \geq 6 $. By taking into account Remark \ref{rem:poly44}, in the collocation formulation with GB-splines of degree $ p = 2q+1 $, we may predict that the convergence rate of standard multigrid methods, at least for little values of the phase parameter $\alpha$, will be slow for $ p \geq 13 $. 
\end{remark}

\begin{remark} \label{rem:poly47}
When the advection coefficient $\beta$ is large with respect to both the diffusion coefficient $\kappa$ and the fineness parameter $n$, the spectrum of the collocation matrix  $ \frac{1}{n^2} A_{n,p}^{\spQ_{n\param}} $ may be badly approximated by its theoretical asymptotic distribution. However, for sufficiently large $n$ and fixed $\beta$, the effects of the advection disappear and the theoretical asymptotic distribution is approached (because $ \frac{1}{n} D_{n,p}^{\spQ_{n\param}}(\beta) H_{n,p}^{\spQ_{n\param}} \approx O $ and $ \frac{1}{n^2} A_{n,p}^{\spQ_{n\param}} \approx D_{n,p}^{\spQ_{n\param}}(\kappa) K_{n,p}^{\spQ_{n\param}} $; see the proof of Theorem \ref{th:poly42}). \\
Conversely, if we assume that $\beta$ grows proportionally with $n$ (this is theoretically unfeasible, but can occur in certain practical situations), then the symbol structurally changes and becomes complex-valued. This has important implications on the spectrum and requires further investigation.
\end{remark}

\subsubsection{Use of a geometry map within this setting} \label{geo_nonnested}

The general case of isogeometric collocation methods with a non-trivial geometry map can be easily addressed with the aid of the results from the previous subsection. Indeed, the geometry map only invokes a change of variable, and leads to a new formulation of the problem with different coefficients. More precisely, given a geometry map $ G : [0,1] \rightarrow [0,1] $, saying that $u$ satisfies our model problem \eqref{c_eq:polycolloc401} is equivalent to say that the corresponding function $ \hat{u} := u(G) $, defined on the parametric domain $[0,1]$, satisfies the transformed problem
$$ \left\{ \begin{aligned}
& - \frac{\kappa(G(\hat{x}))}{(G'(\hat{x}))^2} \hat{u}''(\hat{x})
  + \left( \frac{\kappa(G(\hat{x})) G''(\hat{x})}{(G'(\hat{x}))^3}
          +\frac{\beta(G(\hat{x}))}{G'(\hat{x})} \right) \hat{u}'(\hat{x})
  + \gamma(G(\hat{x})) \hat{u}(\hat{x}) = \hbox{f}(G(\hat{x})), \\
& \hat{u}(0)=0, \quad \hat{u}(1)=0,
\end{aligned} \right. $$
with $ 0 < \hat{x} < 1 $. In this case, our matrix $ A = A_{n,p}^{\spQ_{n\param}} $ is equal to
\begin{align*}
     A_{n,p}^{\spQ_{n\param}} 
& = \left[ - \frac{\kappa(G(\xi_{i+1,p}))}{(G'(\xi_{i+1,p}))^2} 
             (N_{j+1,p}^{\spQ_{n\param}})'' (\xi_{i+1,p}) \right. \\
& \qquad   + \left( \frac{\kappa(G(\xi_{i+1,p})) G''(\xi_{i+1,p})}{(G'(\xi_{i+1,p}))^3}
           + \frac{\beta(G(\xi_{i+1,p}))}{G'(\xi_{i+1,p})} \right)
             (N_{j+1,p}^{\spQ_{n\param}})' (\xi_{i+1,p}) \\
& \qquad   + \gamma(G(\xi_{i+1,p})) N_{j+1,p}^{\spQ_{n\param}} (\xi_{i+1,p}) 
             \left. \vphantom{\frac{1}{1}} \right]_{i,j=1}^{n+p-2},
\end{align*}
while the right-hand side is $ {\bf f} = [\hbox{f}(G(\xi_{i+1,p}))]_{i=1}^{n+p-2} $. \\
The next theorem shows explicitly the influence of the geometry map on the spectral distribution of the normalized matrices $ \{ \frac{1}{n^2} A_{n,p}^{\spQ_{n\param}} \} $.
\begin{theorem} \label{th:poly48}
Let $ p \geq 2 $. Let $ G : [0,1] \rightarrow [0,1] $ such that $ G \in C^1([0,1]) $, $ 0 < G'(\hat{x}) $ for all $ \hat{x} \in [0,1] $ and $G''$ is bounded. Then, the sequence of normalized collocation matrices $ \{ \frac{1}{n^2} A_{n,p}^{\spQ_{n\param}} \} $ is distributed like the function $ \frac{\kappa(G)}{(G')^2} \otimes f_p^{\spQ_\param} $. 
\end{theorem}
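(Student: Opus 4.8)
The plan is to reduce the statement to Theorem~\ref{th:poly42}, exploiting the fact that the geometry map $G$ changes only the coefficients of the problem and not the structure of the collocation matrix. Inspecting the displayed expression for $A_{n,p}^{\spQ_{n\param}}$ in the presence of $G$, one sees that it coincides with \eqref{c_eq:polycolloc409} once $\kappa,\beta,\gamma$ are replaced by the transformed coefficients
$$\tilde\kappa(\hat x):=\frac{\kappa(G(\hat x))}{(G'(\hat x))^2},\quad \tilde\beta(\hat x):=\frac{\kappa(G(\hat x))G''(\hat x)}{(G'(\hat x))^3}+\frac{\beta(G(\hat x))}{G'(\hat x)},\quad \tilde\gamma(\hat x):=\gamma(G(\hat x)).$$
Hence the normalized matrix admits the split \eqref{c_eq:polycolloc423} with $D_{n,p}^{\spQ_{n\param}}(\tilde\kappa),D_{n,p}^{\spQ_{n\param}}(\tilde\beta),D_{n,p}^{\spQ_{n\param}}(\tilde\gamma)$ in place of the original diagonal sampling matrices, while the Toeplitz-like factors $K_{n,p}^{\spQ_{n\param}},H_{n,p}^{\spQ_{n\param}},M_{n,p}^{\spQ_{n\param}}$, and hence the symbol-carrying piece $T_{n+p-2}(f_p^{\spQ_\param})$, remain unchanged.

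First I would check that the transformed coefficients meet the regularity requirements on which the proof of Theorem~\ref{th:poly42} relies. The essential one is continuity of the diffusion coefficient, which enters only through the modulus-of-continuity bound $\omega(\tilde\kappa,C/n)\to 0$ controlling the remainder $L_{n,p}^{\spQ_{n\param}}$. Since $G\in C^1([0,1])$ and $G'>0$ on the compact interval $[0,1]$, the derivative $G'$ is bounded away from zero, so $(G')^{-2}$ is continuous; combined with continuity of $\kappa$ and $G$ this makes $\tilde\kappa$ continuous. For the lower-order terms mere boundedness is enough: $\tilde\gamma=\gamma(G)$ is bounded, and $\tilde\beta$ is bounded precisely because $G''$ is assumed bounded (with $G'$ bounded below and $\kappa,\beta$ bounded). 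This is the single place where the hypothesis on $G''$ is used, and it is the weakest assumption keeping the advection term negligible.

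With these verifications the whole argument of Theorem~\ref{th:poly42} transfers verbatim under $\kappa\mapsto\tilde\kappa$, $\beta\mapsto\tilde\beta$, $\gamma\mapsto\tilde\gamma$. The dominant term $\widetilde{D}_{n+p-2}(\tilde\kappa)\circ T_{n+p-2}(f_p^{\spQ_\param})$ is distributed as $\tilde\kappa\otimes f_p^{\spQ_\param}$ by \cite[Corollary~2.6]{our-MATHCOMP}; the advection and reaction contributions together with the low-rank correction $R_{n,p}^{\spQ_{n\param}}$ and the remainder $L_{n,p}^{\spQ_{n\param}}$ assemble into a perturbation $Y_n$ with $\Vert Y_n\Vert\le C$ and $\Vert Y_n\Vert_1=o(n)$, so Theorem~\ref{th:poly01} yields $\{\frac{1}{n^2}A_{n,p}^{\spQ_{n\param}}\}\sim_{\lambda}\tilde\kappa\otimes f_p^{\spQ_\param}=\frac{\kappa(G)}{(G')^2}\otimes f_p^{\spQ_\param}$. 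I expect no genuine obstacle beyond the coefficient bookkeeping just described: once the structural identity with the geometry-free case is recognized, the result is essentially a corollary of Theorem~\ref{th:poly42}.
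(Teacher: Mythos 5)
Your proposal is correct and follows exactly the route the paper takes: the paper's own proof consists of the single observation that the theorem is ``a straightforward consequence of Theorem~\ref{th:poly42}, by comparing the two forms of the matrices $A_{n,p}^{\spQ_{n\param}}$,'' i.e.\ the geometry map only replaces $\kappa,\beta,\gamma$ by the transformed coefficients while leaving the matrices $K_{n,p}^{\spQ_{n\param}}, H_{n,p}^{\spQ_{n\param}}, M_{n,p}^{\spQ_{n\param}}$ untouched. Your additional verification that $\tilde\kappa$ is continuous and $\tilde\beta,\tilde\gamma$ are bounded (pinpointing where the hypotheses $G'>0$ and $G''$ bounded enter) is exactly the bookkeeping the paper leaves implicit.
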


\begin{proof}
This is a straightforward consequence of Theorem \ref{th:poly42}, by comparing the two forms of the matrices $ A_{n,p}^{\spQ_{n\param}} $.
\end{proof}

\begin{remark} \label{rem:poly49}
The geometry map $G$ which is considered in Theorem \ref{th:poly48} can be given in any representation: the GB-spline form is prescribed by the paradigm of isogeometric analysis, but the theorem holds for a larger set of choices for it.
\end{remark}

\subsection{The nested case}

\begin{lemma} \label{c_lem:L1convergence}
For $p\geq2$ and sufficiently large $n$, it holds
\begin{equation} \label{c_eq:L1convergence}
\Bigl\Vert f_{p}^{\spQ_\fracparam}-f_{p} \Bigr\Vert_{L_1([-\pi, \pi])}\leq C\,n^{-2}, \quad \spQ=\spH,\spT,
\end{equation}
where $C$ is a constant independent of $n$.
\end{lemma}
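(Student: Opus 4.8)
The plan is to prove the stronger sup-norm estimate $\sup_{\theta\in[-\pi,\pi]}\bigl|f_p^{\spQ_\fracparam}(\theta)-f_p(\theta)\bigr|\leq C\,n^{-2}$, from which \eqref{c_eq:L1convergence} follows at once because $[-\pi,\pi]$ has finite measure. Throughout I set $\eta:=\fracparam=\param/n$ for the (small, for large $n$) phase, and recall that $f_p^{\spQ_\fracparam}$ is the symbol \eqref{c_fpH}--\eqref{c_fpT} evaluated at phase $\eta$, whereas $f_p$ is the polynomial symbol \eqref{c_fpP}; by Remark~\ref{rmk:trig-hyp-car-GB} (and by \eqref{c_fpH}--\eqref{c_fpT}$\to$\eqref{c_fpP}) the latter is precisely the $\eta\to0$ limit of the former, so what is at stake is only the \emph{rate} of convergence. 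The reason the rate is $n^{-2}$ rather than $n^{-1}$ is structural: the phase enters solely through the even functions $\cosh(\eta)$, $\cos(\eta)$ and $\eta^2$, so the first-order term in $\eta$ vanishes everywhere.

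For the low degrees $p\in\{2,3,4\}$ I would argue directly from the explicit forms. Each of $f_2^{\spQ_\fracparam}$, $f_3^{\spQ_\fracparam}$, $f_4^{\spQ_\fracparam}$ in \eqref{c_expl:gf2T}--\eqref{c_expl:f4H} is a fixed trigonometric polynomial whose coefficients are explicit even smooth functions of the phase; for instance $f_3^{\spH_\fracparam}(\theta)=\eta\coth(\eta/2)(1-\cos\theta)$ with $\eta\coth(\eta/2)=2+\tfrac{\eta^2}{6}+O(\eta^4)$, matching $f_3(\theta)=2(1-\cos\theta)$. Taylor expanding these coefficients about $\eta=0$ (the odd-order terms being absent) shows each differs from its polynomial counterpart by $O(\eta^2)$, and since there are boundedly many terms, each of modulus at most a constant, $\|f_p^{\spQ_\fracparam}-f_p\|_{L_\infty}\leq C\eta^2=C\,n^{-2}$.

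For $p\geq5$ I would use the series \eqref{c_fpP} and \eqref{c_fpH}--\eqref{c_fpT} and estimate term by term. Writing the $k$-th summand of $f_p^{\spH_\fracparam}$ as $4\,A(\eta)\,B_k(\eta,\theta)\,C_k(\theta)$ with
\[ A(\eta)=\frac{\eta^2}{\cosh(\eta)-1},\qquad B_k(\eta,\theta)=\frac{\cosh(\eta)-\cos(\theta)}{(\theta+2k\pi)^2+\eta^2},\qquad C_k(\theta)=\frac{(\sin(\theta/2+k\pi))^{p-1}}{(\theta/2+k\pi)^{p-3}}, \]
one checks that $C_k$ is $\eta$-independent and that $A(0)B_k(0,\theta)C_k(\theta)$ is exactly the $k$-th summand of $f_p$; hence the difference splits, for each $k$, as $4C_k(\theta)\bigl([A(\eta)-A(0)]B_k(\eta,\theta)+A(0)[B_k(\eta,\theta)-B_k(0,\theta)]\bigr)$. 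Now $A$ is even and smooth with $A(0)=2$, so $|A(\eta)-A(0)|\leq C\eta^2$; $B_k$ is bounded and, for $k\neq0$, bounded by $C(\theta+2k\pi)^{-2}$; and $|C_k(\theta)|\leq C|k|^{-(p-3)}$ since $|\sin|\leq1$. The crux is the uniform per-mode bound $|B_k(\eta,\theta)-B_k(0,\theta)|\leq C\eta^2\min\{1,(\theta+2k\pi)^{-2}\}$: putting the two fractions over a common denominator gives, for $k=0$, numerator $\theta^2(\cosh(\eta)-1)-\eta^2(1-\cos(\theta))$, which looks only $O(1)$ near $\theta=0$ but in fact equals $\theta^2R_1(\eta)-\eta^2R_2(\theta)$ with $R_1=\cosh(\eta)-1-\tfrac{\eta^2}{2}=O(\eta^4)$ and $R_2=1-\cos(\theta)-\tfrac{\theta^2}{2}=O(\theta^4)$, so the numerator is $O(\eta^2\theta^2(\eta^2+\theta^2))$ and its quotient by $(\theta^2+\eta^2)\theta^2$ is $O(\eta^2)$ uniformly. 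Summing then gives $\|f_p^{\spH_\fracparam}-f_p\|_{L_\infty}\leq C\eta^2\bigl(1+\sum_{k\neq0}|k|^{-(p-1)}\bigr)\leq C\,n^{-2}$, the series converging because $p-1\geq2$.

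The trigonometric case $\spQ=\spT$ runs along the same lines with $\cosh\mapsto\cos$ and the sign of $\eta^2$ reversed in the denominators. The only genuinely new point, and the step I expect to be the main obstacle, is that for $k=0$ the denominator $\theta^2-\eta^2$ of $B_0(\eta,\theta)$ vanishes at $\theta=\pm\eta$; here the factorization $B_0(\eta,\theta)=\tfrac12\,\mathrm{sinc}\!\bigl(\tfrac{\theta+\eta}{2}\bigr)\,\mathrm{sinc}\!\bigl(\tfrac{\theta-\eta}{2}\bigr)$, with $\mathrm{sinc}(x)=\sin(x)/x$, exhibits the singularity as removable and shows $B_0$ to be smooth and even in $\eta$, so that $B_0(\eta,\theta)-B_0(0,\theta)=O(\eta^2)$ uniformly on $[-\pi,\pi]$ by Taylor's theorem. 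For $k\neq0$ one needs $n$ large enough that $\eta<\pi$, whence $(\theta+2k\pi)^2-\eta^2\geq\tfrac12(\theta+2k\pi)^2>0$ and the earlier bounds carry over; this is exactly where the hypothesis of sufficiently large $n$ enters. Collecting the two symbol types and the two degree ranges yields $\|f_p^{\spQ_\fracparam}-f_p\|_{L_1([-\pi,\pi])}\leq 2\pi\,\|f_p^{\spQ_\fracparam}-f_p\|_{L_\infty}\leq C\,n^{-2}$.
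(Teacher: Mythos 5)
Your proof is correct, and at the top level it follows the same strategy as the paper's: both establish the stronger uniform bound $\sup_\theta|f_p^{\spQ_\fracparam}(\theta)-f_p(\theta)|\le Cn^{-2}$ via a second-order Taylor expansion in the phase (with the $n^{-2}$ rate coming exactly from the evenness you point out), both dispatch $p=2,3,4$ through the explicit formulas, and both sum the $p\ge5$ series using a $\min(1,(\theta+2k\pi)^{-2})$-type decay of the per-mode error. The execution differs in two respects worth recording. First, the paper strips off the $\eta$-independent factor $C_k$ (bounding it by $1$), Taylor-expands the entire remaining per-mode difference $l^{\spH}(\varepsilon,\eta)$ in $\varepsilon$, and exhibits the exact second-order coefficient $\tilde l^{\spH}(\eta)=\bigl(\eta^2\cos\eta+5\eta^2+12\cos\eta-12\bigr)/(6\eta^4)$, which it then bounds by $\min(0.04,\eta^{-2})$ by inspection; your product-rule splitting $[A(\eta)-A(0)]B_k+A(0)[B_k(\eta,\cdot)-B_k(0,\cdot)]$, with the cancellation $\theta^2(\cosh\eta-1)-\eta^2(1-\cos\theta)=\theta^2R_1(\eta)-\eta^2R_2(\theta)$ made explicit, reaches the same conclusion without computing or numerically estimating that coefficient, and your retention of $|C_k|\le C|k|^{-(p-3)}$ only improves the tail. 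Second, the paper dismisses the trigonometric case with ``similar arguments,'' whereas you correctly identify its one genuine extra wrinkle --- the removable singularity of $(\cos\eta-\cos\theta)/(\theta^2-\eta^2)$ at $\theta=\pm\eta$ once $k=0$ --- and your factorization into the product $\tfrac12\,\mathrm{sinc}\bigl(\tfrac{\theta+\eta}{2}\bigr)\mathrm{sinc}\bigl(\tfrac{\theta-\eta}{2}\bigr)$ both removes it and displays the evenness in $\eta$ needed for the quadratic rate; this fills in a step the paper leaves implicit.
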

\begin{proof}
For $p=2,3,4$, it is a consequence of \eqref{c_expl:gf2T} and \eqref{c_expl:gf2H}-\eqref{c_expl:f4P}. For $ p \geq 5 $, we just prove the hyperbolic case, because the trigonometric case can be addressed with similar arguments. From \eqref{c_fpP}-\eqref{c_fpH} we obtain
{\small \begin{align*}
  &  \qquad \vert f_{p}^{\spH_\fracparam}(\theta)-f_{p}(\theta) \vert \\
  &\leq 4 \sum_{k \in \ZZ} \left| \frac{{(\sin(\theta/2+k\pi))}^{p-1}}{{(\theta/2+k\pi)}^{p-3}} \right|
     \left[ \left(\frac{(\fracparam)^2}{\cosh(\fracparam)-1}\right)
     \left(\frac{\cosh(\fracparam)-\cos(\theta)}{(\theta+2k\pi)^2+(\fracparam)^2}\right) 
   - \left( \frac{{\sin(\theta/2+k\pi)}}{{\theta/2+k\pi}} \right)^2 \right] \\ 
  & \leq 4 \sum_{k \in \ZZ} \left(\frac{(\fracparam)^2}{\cosh(\fracparam)-1}\right)
     \left(\frac{\cosh(\fracparam)-\cos(\theta)}{(\theta+2k\pi)^2+(\fracparam)^2}\right) 
   - \left( \frac{{\sin(\theta/2+k\pi)}}{{\theta/2+k\pi}} \right)^2.
\end{align*} } 
We can write:
$$ \left(\frac{(\fracparam)^2}{\cosh(\fracparam)-1}\right)
   \left(\frac{\cosh(\fracparam)-\cos(\theta)}{(\theta+2k\pi)^2+(\fracparam)^2}\right) 
 - \left( \frac{{\sin(\theta/2+k\pi)}}{{\theta/2+k\pi}} \right)^2
:= l^{\spH}(\fracparam,\theta+2k\pi), $$
by setting:
$$ l^{\spH}(\varepsilon,\eta)
:= \left(\frac{\varepsilon^2}{\cosh(\varepsilon)-1}\right)
   \left(\frac{\cosh(\varepsilon)-\cos(\eta)}{\eta^2+\varepsilon^2}\right) 
 - \left( \frac{\sin(\eta/2)}{\eta/2} \right)^2. $$
We consider the Taylor expansion of $l^{\spH}$ with respect to $\varepsilon$ up to the second degree at the point $0$, i.e.,
\begin{equation*}
 \varepsilon^2
\frac{\eta^2 \cos(\eta) + 5\eta^2 + 12 \cos(\eta) - 12}{6 \eta^4}
=:\varepsilon^2 \tilde{l}^{\spH}(\eta).
\end{equation*}
It can be verified that $|\tilde{l}^{\spH}(\eta)|\leq \min(0.04,\eta^{-2})$.
As a consequence, we have for sufficiently small $\varepsilon$,
$$
|l^{\spH}(\varepsilon,\eta)| \leq \varepsilon^2 \widetilde C \min(0.04,\eta^{-2}),
$$
for some positive constant $\widetilde C$ independent of $\varepsilon$ and $\eta$. \\
Using the previous bounds, we obtain for sufficiently large $n$,
\begin{align*}
\vert f_{p}^{\spH_\fracparam}(\theta)-f_{p}(\theta) \vert
&\leq
4 \left( \sum_{|k| \leq 1} \Bigl| l^{\spH}\Bigl(\frac{\param}{n},\theta+2k\pi \Bigr) \Bigr| 
  + \sum_{|k| \geq 2} \Bigl| l^{\spH}\left(\frac{\param}{n},\theta+2k\pi \right) \Bigr| \right) \\
& \leq 4\left(\frac{\param}{n}\right)^2 \widetilde C 
  \Bigl(0.12 + \sum_{|k| \geq 2} \frac{1}{(\theta+2k\pi)^2} \Bigr) \\
& \leq 4\left(\frac{\param}{n}\right)^2 \widetilde C 
  \Bigl(0.12 + \frac{2}{\pi^2}\sum_{k \geq 2} \frac{1}{(2k-1)^2} \Bigr) \leq \overset{\vee}{C} n^{-2},
\end{align*}
which implies (\ref{c_eq:L1convergence}).
\end{proof}
\begin{lemma} \label{lem:poly41_nested}
For every $ p \geq 2 $ and every $ n \geq 2 $, we have
$$ \Vert M_{n,p}^{\spQ_\param} \Vert_2 \leq \sqrt{\frac{3p}{2}}, \quad
   \Vert H_{n,p}^{\spQ_\param} \Vert_2 \leq C_{p,\param}^{(3)}, \quad
   \Vert K_{n,p}^{\spQ_\param} \Vert_2 \leq C_{p,\param}^{(4)}, $$
where $ C_{p,\param}^{(3)} $ and $ C_{p,\param}^{(4)} $ are constants independent of $n$.
\end{lemma}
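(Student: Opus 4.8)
The plan is to follow the proof of Lemma~\ref{lem:poly41} essentially line by line, the only structural difference being that here, by \eqref{c_eq:poly39q} with $\paramgen=\param$, the relevant cardinal GB-splines carry the phase $\param/n$ instead of the fixed phase $\param$ of the non-nested case. As there, I would start from the submultiplicative estimate \eqref{c_eq:norm2bound} and reduce the three claims to bounds on $\Vert\,\cdot\,\Vert_\infty$ for each matrix and its transpose.

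The bound $\Vert M_{n,p}^{\spQ_\param}\Vert_2\le\sqrt{3p/2}$ is obtained verbatim: the row-sum estimate $\Vert M_{n,p}^{\spQ_\param}\Vert_\infty\le1$ and the column-sum estimate $\Vert(M_{n,p}^{\spQ_\param})^{T}\Vert_\infty\le\frac{3p}{2}$ use only the nonnegativity and the partition-of-unity property of the GB-splines, both of which hold for every admissible phase and in particular for $\param/n$. For $H_{n,p}^{\spQ_\param}$ and $K_{n,p}^{\spQ_\param}$ I would again invoke the recurrence relation for derivatives to write $(N_{j+1,p}^{\spQ_\param})'$, respectively $(N_{j+1,p}^{\spQ_\param})''$, as a $\delta$-weighted combination of lower-degree GB-splines, and then bound the row and column sums exactly as in Lemma~\ref{lem:poly41}, balancing the factor $\frac{3p}{2}$ between the two. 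This reduces everything to the estimates $\max_j\delta_{j,p-1}^{\spQ_\param}\le C_{p,\param}\,n$ and the corresponding bound on the products $\delta_{j,p-1}^{\spQ_\param}\delta_{j,p-2}^{\spQ_\param}$, with constants independent of $n$.

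The single genuinely new point, and the main obstacle, is to control these $\delta$ factors uniformly in $n$, since now the phase $\param/n$ varies with $n$ rather than staying fixed. For the interior (cardinal) indices there is nothing to do: combining \eqref{c_eq:poly39q} with a change of variables and the unity-of-integral property \eqref{eq:intunity} gives $\int_0^1 N_{j,p-1}^{\spQ_\param}=\frac1n$ exactly, independently of the phase, whence $\delta_{j,p-1}^{\spQ_\param}=n$; the same holds for the degree $p-2$ factors entering the estimate for $K_{n,p}^{\spQ_\param}$. Only the finitely many boundary (non-cardinal) splines remain. For these I would observe that, as $n$ ranges over the admissible integers, $\param/n$ lies in the compact interval $[0,\param/2]$, whose endpoint $0$ corresponds by Remark~\ref{rmk:trig-hyp-GB} to the polynomial B-splines; the rescaled integrals $n\int_0^1 N_{j,p-1}^{\spQ_\param}$ depend continuously on this phase and are strictly positive throughout, hence are bounded below by a positive constant depending only on $p$ and $\param$. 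This yields $\delta_{j,p-1}^{\spQ_\param}\le C_{p,\param}\,n$ for the boundary indices as well, and substituting these bounds into the chain of inequalities of Lemma~\ref{lem:poly41} produces the asserted constants $C_{p,\param}^{(3)}$ and $C_{p,\param}^{(4)}$.
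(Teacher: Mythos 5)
Your proposal is correct and follows essentially the same route as the paper: both reduce the statement to Lemma~\ref{lem:poly41} and then establish uniformity in $n$ by observing that the occurring phases $\param/n$ form a set accumulating only at $0$, where the relevant quantities tend continuously to their finite polynomial-case values (the paper does this at the level of the constants $C^{(1)}_{p,\overline{\param}},C^{(2)}_{p,\overline{\param}}$ by taking $\sup_{\overline{\param}\in(0,\param]}$, while you descend to the $\delta$ factors and split cardinal from boundary splines, which is a legitimate, slightly more explicit version of the same idea). The one point you gloss over, and which the paper treats explicitly, is the trigonometric case with $\param\ge\pi$: there the constraint $0<\param/n<\pi$ excludes small $n$, so the compact interval on which you invoke continuity and strict positivity must be $[0,\param^*]$ with $\param^*=\param/(\lfloor\param/\pi\rfloor+1)$ rather than $[0,\param/2]$ (on which, for $\param\ge 2\pi$, the splines are not even defined); your phrase ``admissible integers'' points in the right direction but the interval should be adjusted accordingly.
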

\begin{proof}
By referring to Lemma \ref{lem:poly41}, the limitation about $ \Vert M_{n,p}^{\spQ_{\param}} \Vert_2 $ does not depend on $\param$, and so it is proved there. Limitations regarding $ \Vert H_{n,p}^{\spQ_{\param}} \Vert_2 $ and $ \Vert K_{n,p}^{\spQ_{\param}} \Vert_2 $ derive by setting, if $\spQ=\spH$
\begin{align*}
C_{p,\param}^{(3)} &:= \underset{\overline{\param} \in (0,\param]}{\sup} C_{p,\overline{\param}}^{(1)} 
                   \geq \sup_{n \in \NN_0} C_{p,\fracparam}^{(1)}, \\
C_{p,\param}^{(4)} &:= \underset{\overline{\param} \in (0,\param]}{\sup} C_{p,\overline{\param}}^{(2)}
                   \geq \sup_{n \in \NN_0} C_{p,\fracparam}^{(2)},
\end{align*}
in which $ C_{p,\param}^{(3)} $ and $ C_{p,\param}^{(4)} $ are finite because $ C_{p,\overline{\param}}^{(1)} $ and $ C_{p,\overline{\param}}^{(2)} $ are continuous as functions of $\overline{\param}$, and both $ \underset{n \rightarrow \infty}{\lim} C_{p,\fracparam}^{(1)} = \underset{\overline{\param} \rightarrow 0}{\lim} C_{p,\overline{\param}}^{(1)} $ and $ \underset{n \rightarrow \infty}{\lim} C_{p,\fracparam}^{(2)} = \underset{\overline{\param} \rightarrow 0}{\lim} C_{p,\overline{\param}}^{(2)} $ are finite, tending to polynomial case bounds. \\
If $\spQ=\spT$, we have to take care of the fact that, if $ \param \geq \pi $, not every value of $ n \in \NN_0 $ is feasible for construction, due to the constraint in \eqref{Trig}. More specifically, it is necessary to consider $ n \geq \lfloor \frac{\param}{\pi} \rfloor + 1 $; if we define $ \param^* = \frac{\param}{\lfloor \frac{\param}{\pi} \rfloor + 1} $, we can set
\begin{align*}
C_{p,\param}^{(3)} &:= \underset{\overline{\param} \in (0,\param^*]}{\sup} C_{p,\overline{\param}}^{(1)} 
                   \geq \sup_{n \geq \lfloor \frac{\param}{\pi} \rfloor + 1} C_{p,\fracparam}^{(1)}, \\
C_{p,\param}^{(4)} &:= \underset{\overline{\param} \in (0,\param^*]}{\sup} C_{p,\overline{\param}}^{(2)}
                   \geq \sup_{n \geq \lfloor \frac{\param}{\pi} \rfloor + 1} C_{p,\fracparam}^{(2)}.
\end{align*}
Nevertheless, considerations on finiteness of $ C_{p,\param}^{(3)} $ and $ C_{p,\param}^{(4)} $ are analogous.
\end{proof}
%
%
In the following Theorem, that is the main one for the nested case, we will use the following notation, in which, as in the proof of the previous Lemma, $ \param^* = \frac{\param}{\lfloor \frac{\param}{\pi} \rfloor + 1} $.
\begin{align*}
M_{f_p^{\spH_{[0,\param]}}} &:= \sup_{\overline{\param} \in (0,\param]} M_{f_p^{\spH_{\overline{\param}}}}
                           \geq \sup_{n \in \NN_0} M_{f_p^{\spH_\fracparam}}, \\
C_T^{\spH_{[0,\param]}}     &:= \sup_{\overline{\param} \in (0,\param]} C_T^{\spH_{\overline{\param}}}
                           \geq \sup_{n \in \NN_0} C_T^{\spH_\fracparam}, \\
M_{f_p^{\spT_{[0,\param]}}} &:= \sup_{\overline{\param} \in (0,\param^*]} M_{f_p^{\spT_{\overline{\param}}}}
                           \geq \sup_{n \geq \lfloor \frac{\param}{\pi} \rfloor + 1} M_{f_p^{\spT_\fracparam}}, \\
C_T^{\spT_{[0,\param]}}     &:= \sup_{\overline{\param} \in (0,\param^*]} C_T^{\spT_{\overline{\param}}}
                           \geq \sup_{n \geq \lfloor \frac{\param}{\pi} \rfloor + 1} C_T^{\spT_\fracparam}.
\end{align*}
Note that $ M_{f_p^{\spQ_{[0,\param]}}} $ and $ C_T^{\spQ_{[0,\param]}} $ are finite because $ M_{f_p^{\spQ_{\overline{\param}}}} $ and $ C_T^{\spQ_{\overline{\param}}} $ are continuous as functions of $\overline{\param}$, $ \underset{n \rightarrow \infty}{\lim} M_{f_p^{\spQ_\fracparam}} = \underset{\overline{\param} \rightarrow 0}{\lim} M_{f_p^{\spQ_{\overline{\param}}}} = M_{f_p} $, and $ \underset{n \rightarrow \infty}{\lim} C_T^{\spQ_\fracparam} = \underset{\overline{\param} \rightarrow 0}{\lim} C_T^{\spQ_{\overline{\param}}} = C_T $.

\begin{theorem} \label{th:poly42_nested}
Let $ p \geq 2 $. Then, the sequence of normalized collocation matrices $ \{ \frac{1}{n^2} A_{n,p}^{\spQ_\param} \} $ is distributed like the function $ \kappa \otimes f_p $, where $f_p$ is defined in \eqref{c_fpP}, in the sense of the eigenvalues, i.e., for all $ F \in C_c({\mathbb{C}}) $,
$$   \lim_{n \rightarrow \infty} \frac{1}{n+p-2} 
     \sum_{j=1}^{n+p-2} F\left(\lambda_j\left(\frac{1}{n^2} A_{n,p}^{\spQ_\param}\right)\right) 
   = \frac{1}{2\pi} \int_0^1 \int_{-\pi}^{\pi} F(\kappa(x) f_p(\theta)) {\rm d}\theta {\rm d}x. $$
\end{theorem}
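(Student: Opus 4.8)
The plan is to follow the blueprint of the proof of Theorem \ref{th:poly42}, invoking the perturbation result Theorem \ref{th:poly01}, but now taking as the leading Hermitian term the matrix built from the \emph{polynomial} symbol $f_p$, since $\kappa\otimes f_p$ is the target distribution. Concretely, I would set $X_n := \widetilde D_{n+p-2}(\kappa)\circ T_{n+p-2}(f_p)$ and write $\frac{1}{n^2}A_{n,p}^{\spQ_\param} = X_n + Y_n$, then verify that $Y_n$ meets the two requirements of Theorem \ref{th:poly01}: uniformly bounded spectral norm and trace norm that is $o(n+p-2)$. Because $f_p$ is real and a trigonometric polynomial, the same reasoning as in Theorem \ref{th:poly42} (via \cite[Corollary 2.6]{our-MATHCOMP}) gives $\{X_n\}\sim_{\lambda}\kappa\otimes f_p$ and $\Vert X_n\Vert\le M_\kappa C_T$, so $X_n$ is a legitimate leading term.

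The decomposition of $Y_n$ is the crux. Using the nested splitting $K_{n,p}^{\spQ_\param}=T_{n+p-2}(f_p^{\spQ_\fracparam})+R_{n,p}^{\spQ_\param}$ from \eqref{c_eq:polycolloc424} --- note that in the nested case the central Toeplitz block is generated by the symbol with parameter $\fracparam=\alpha/n$, \emph{not} by $f_p$ --- I would group
\begin{align*}
E_{n,p}^{\spQ_\param} &:= \tfrac1n D_{n,p}^{\spQ_\param}(\beta)H_{n,p}^{\spQ_\param}+\tfrac1{n^2}D_{n,p}^{\spQ_\param}(\gamma)M_{n,p}^{\spQ_\param},\\
F_{n,p}^{\spQ_\param} &:= D_{n,p}^{\spQ_\param}(\kappa)R_{n,p}^{\spQ_\param},\\
G_{n,p}^{\spQ_\param} &:= D_{n,p}^{\spQ_\param}(\kappa)\bigl(T_{n+p-2}(f_p^{\spQ_\fracparam})-T_{n+p-2}(f_p)\bigr),\\
L_{n,p}^{\spQ_\param} &:= D_{n,p}^{\spQ_\param}(\kappa)T_{n+p-2}(f_p)-\widetilde D_{n+p-2}(\kappa)\circ T_{n+p-2}(f_p),
\end{align*}
so that $Y_n=E_{n,p}^{\spQ_\param}+F_{n,p}^{\spQ_\param}+G_{n,p}^{\spQ_\param}+L_{n,p}^{\spQ_\param}$. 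The three terms $E,F,L$ are controlled verbatim as in Theorem \ref{th:poly42}, with Lemma \ref{lem:poly41_nested} supplying the uniform bounds $\Vert H_{n,p}^{\spQ_\param}\Vert_2,\Vert K_{n,p}^{\spQ_\param}\Vert_2\le C$ in place of Lemma \ref{lem:poly41}: the term $E$ has spectral norm $O(1/n)$ and hence trace norm $o(n)$; the term $F$ has $\rank R_{n,p}^{\spQ_\param}\le 2\lfloor 3p/2\rfloor-2$, so $\Vert F\Vert_1\le(2\lfloor 3p/2\rfloor-2)M_\kappa(C_{p,\param}^{(4)}+M_{f_p^{\spQ_{[0,\param]}}})$ is bounded by a constant; and $L$ has spectral norm $\le C_T\,\omega(\kappa,C/n)\to0$ by the uniform continuity of $\kappa$.

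The genuinely new object is $G_{n,p}^{\spQ_\param}$, and handling it is where Lemma \ref{c_lem:L1convergence} enters. By linearity of the Toeplitz operator, $G_{n,p}^{\spQ_\param}=D_{n,p}^{\spQ_\param}(\kappa)T_{n+p-2}(f_p^{\spQ_\fracparam}-f_p)$. Its spectral norm is uniformly bounded in $n$ because $\Vert T_{n+p-2}(f_p^{\spQ_\fracparam})\Vert\le M_{f_p^{\spQ_{[0,\param]}}}$ and $\Vert T_{n+p-2}(f_p)\Vert\le M_{f_p}$, both finite. For the trace norm I would exploit that $f_p^{\spQ_\fracparam}-f_p$ is a trigonometric polynomial of fixed degree $\le\lfloor p/2\rfloor$ (see \eqref{c_eq:fp_finitesum}), so $T_{n+p-2}(f_p^{\spQ_\fracparam}-f_p)$ is a banded Toeplitz matrix whose trace norm is at most $(n+p-2)$ times the sum of the moduli of its at most $p+1$ Fourier coefficients, each of which is bounded by $\tfrac{1}{2\pi}\Vert f_p^{\spQ_\fracparam}-f_p\Vert_{L_1([-\pi,\pi])}$. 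Combining this with $\Vert D_{n,p}^{\spQ_\param}(\kappa)\Vert=M_\kappa$ and Lemma \ref{c_lem:L1convergence} yields
$$\frac{\Vert G_{n,p}^{\spQ_\param}\Vert_1}{n+p-2}\le M_\kappa\,\frac{p+1}{2\pi}\,\Vert f_p^{\spQ_\fracparam}-f_p\Vert_{L_1([-\pi,\pi])}\le M_\kappa\,\frac{p+1}{2\pi}\,C\,n^{-2}\longrightarrow 0.$$

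Adding the four contributions shows $\Vert Y_n\Vert\le C_Y$ and $\Vert Y_n\Vert_1/(n+p-2)\to0$, so Theorem \ref{th:poly01} applies and gives $\{\frac{1}{n^2}A_{n,p}^{\spQ_\param}\}\sim_{\lambda}\kappa\otimes f_p$. I expect the main obstacle to be precisely the estimate of $G_{n,p}^{\spQ_\param}$: one must recognize that, unlike the non-nested case where the Toeplitz symbol already equals the limiting symbol, here the discretization symbol $f_p^{\spQ_\fracparam}$ drifts toward $f_p$ at rate $n^{-2}$, and this drift must be absorbed into the low-trace-norm perturbation $Y_n$ rather than showing up in the limiting distribution.
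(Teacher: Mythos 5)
Your proof is correct and follows essentially the same route as the paper: the same splitting into $E_{n,p}^{\spQ_\param}$, $F_{n,p}^{\spQ_\param}$, $L_{n,p}^{\spQ_\param}$ plus a drift term controlled by Lemma \ref{c_lem:L1convergence}, with Lemma \ref{lem:poly41_nested} supplying the uniform norm bounds and \cite[Corollary 2.6]{our-MATHCOMP} handling the leading Hermitian term. The only difference is organizational: the paper passes through the intermediate sequence $Z_n=\widetilde D_{n+p-2}(\kappa)\circ T_{n+p-2}(f_p^{\spQ_\fracparam})$ and proves a chain of distribution equivalences, whereas you fold the drift $T_{n+p-2}(f_p^{\spQ_\fracparam}-f_p)$ directly into the perturbation $Y_n$ and apply Theorem \ref{th:poly01} once, bounding its trace norm via the $L_1$ estimate and the banded structure where the paper uses the $L_\infty$ bound; both are valid.
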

\begin{proof}
Let us recall
\begin{align*}
E_{n,p}^{\spQ_{\param}} & := \frac{1}{n} D_{n,p}^{\spQ_{\param}}(\beta) H_{n,p}^{\spQ_{\param}} +
                              \frac{1}{n^2} D_{n,p}^{\spQ_{\param}}(\gamma) M_{n,p}^{\spQ_{\param}}, \\
F_{n,p}^{\spQ_{\param}} & := D_{n,p}^{\spQ_{\param}}(\kappa) K_{n,p}^{\spQ_{\param}} -
                              D_{n,p}^{\spQ_{\param}}(\kappa) T_{n+p-2}(f_p^{\spQ_\fracparam}), \\
L_{n,p}^{\spQ_{\param}} & := D_{n,p}^{\spQ_{\param}}(\kappa) T_{n+p-2}(f_p^{\spQ_\fracparam}) -
                               \widetilde{D}_{n+p-2}(\kappa) \circ T_{n+p-2}(f_p^{\spQ_\fracparam}).
\end{align*}

For arguments used in the proofs of Theorem \ref{th:poly42} and Lemma \ref{lem:poly41_nested},
the following bounds hold
\begin{align}
& \Vert E_{n,p}^{\spQ_{\param}} \Vert \leq M_{\vert \beta \vert} \frac{C_{p,\param}^{(3)}}{n}
   + M_{\gamma} \frac{1}{n^2} \sqrt{\frac{3p}{2}}, \notag \\
& \Vert E_{n,p}^{\spQ_{\param}} \Vert_1 \leq (n+p-2) \Vert E_{n,p}^{\spQ_{\param}} \Vert
   \leq (n+p-2) \left( M_{\vert \beta \vert} \frac{C_{p,\param}^{(3)}}{n} 
                     + M_{\gamma} \frac{1}{n^2} \sqrt{\frac{3p}{2}} \right), \notag \\
& \Vert F_{n,p}^{\spQ_{\param}} \Vert 
   \leq M_{\kappa} (\Vert K_{n,p}^{\spQ_{\param}} \Vert + \Vert T_{n+p-2}(f_p^{\spQ_\fracparam}) \Vert)
   \leq M_{\kappa} (C_{p,\param}^{(4)} + M_{f_p^{\spQ_{[0,\param]}}}), \notag \\
& \Vert F_{n,p}^{\spQ_{\param}} \Vert_1 
   \leq (2 \lfloor 3p/2 \rfloor - 2) \Vert D_{n,p}^{\spQ_{\param}}(\kappa) R_{n,p}^{\spQ_{\param}} \Vert
   \leq (2 \lfloor 3p/2 \rfloor - 2) M_{\kappa} (C_{p,\param}^{(4)} + M_{f_p^{\spQ_{[0,\param]}}} ),
   \notag \\
& \Vert L_{n,p}^{\spQ_{\param}} \Vert 
   \leq \omega \left( \kappa,\frac{C}{n} \right) \Vert T_{n+p-2}(f_p^{\spQ_\fracparam}) \Vert_{\infty} 
   \leq C_T^{\spQ_{[0,\param]}} \omega \left( \kappa,\frac{C}{n} \right), \notag
\end{align}
and, along with the limit 
$$ \lim_{n \rightarrow \infty} \frac{\Vert L_{n,p}^{\spQ_{\param}} \Vert_1}{n+p-2} = 0, $$
it follows that a constant $K_Y$ independent of $n$ exists such that both
\begin{align}
     \Vert E_{n,p}^{\spQ_{\param}} + F_{n,p}^{\spQ_{\param}} + L_{n,p}^{\spQ_{\param}} \Vert
\leq \Vert E_{n,p}^{\spQ_{\param}} \Vert + \Vert F_{n,p}^{\spQ_{\param}} \Vert
   + \Vert L_{n,p}^{\spQ_{\param}} \Vert & \leq K_Y, \label{eflinf_nested} \\
\lim_{n \rightarrow \infty} 
\frac{\Vert E_{n,p}^{\spQ_{\param}} + F_{n,p}^{\spQ_{\param}} + L_{n,p}^{\spQ_{\param}} \Vert_1}{n+p-2} 
& = 0. \label{efl1_nested}
\end{align}
We say now that a sequence of matrices $G_n$ is distributed like a sequence of matrices $W_n$, $ G_n \sim_{\lambda} W_n $, if and only if a function $\varphi$ exists such that both $G_n$ and $W_n$ are distributed like $\varphi$. If we set
\begin{align*}
Z_n & = \tilde{D}_{n+p-2}(\kappa) \circ T_{n+p-2}(f_p^{\spQ_\fracparam}), \\
X_n & = \tilde{D}_{n+p-2}(\kappa) \circ T_{n+p-2}(f_p), \\
Y_n & = \tilde{D}_{n+p-2}(\kappa) \circ T_{n+p-2}(f_p^{\spQ_\fracparam} - f_p),
\end{align*}
then we obtain the decomposition
\begin{equation} \label{dlike_prechain}
\frac{1}{n^2} A_{n,p}^{\spQ_\param} = Z_n + E_{n,p}^{\spQ_\param} + F_{n,p}^{\spQ_\param} + L_{n,p}^{\spQ_\param}. 
\end{equation}
We want to prove the chain
\begin{equation} \label{dlike_chain}
\frac{1}{n^2} A_{n,p}^{\spQ_\param} \sim_{\lambda} Z_n \sim_{\lambda} X_n 
\sim_{\lambda} \kappa \otimes f_p.
\end{equation}
Since we find (see above about $Z_n$, and proof of \cite[Theorem 4.2]{our-MATHCOMP} about $X_n$) 
\begin{align*}
\Vert Z_n \Vert & = \Vert \tilde{D}_{n+p-2}(\kappa) \circ T_{n+p-2}(f_p^{\spQ_\fracparam}) \Vert
               \leq \Vert \tilde{D}_{n+p-2}(\kappa) \circ T_{n+p-2}(f_p^{\spQ_\fracparam}) \Vert_{\infty}
               \leq M_{\kappa} C_T^{\spQ_{[0,\param]}}, \\ 
\Vert X_n \Vert & = \Vert \tilde{D}_{n+p-2}(\kappa) \circ T_{n+p-2}(f_p) \Vert 
               \leq \Vert \tilde{D}_{n+p-2}(\kappa) \circ T_{n+p-2}(f_p) \Vert_{\infty} \leq M_{\kappa} C_T, 
\\ 
\Vert Y_n \Vert_1 & = \Vert \tilde{D}_{n+p-2}(\kappa) \circ T_{n+p-2}(f_p^{\spQ_\fracparam}-f_p) \Vert_1 \\
              & \leq (n+p-2) \Vert \tilde{D}_{n+p-2}(\kappa) \circ T_{n+p-2}(f_p^{\spQ_\fracparam}-f_p) \Vert
               \\ & \leq (n+p-2) \Vert \tilde{D}_{n+p-2}(\kappa) \circ T_{n+p-2}(f_p^{\spQ_\fracparam}-f_p) \Vert_{\infty} \\
              & \leq (n+p-2) M_{\kappa} \Vert T_{n+p-2}(f_p^{\spQ_\fracparam}-f_p) \Vert_{\infty} \\
              & \leq (n+p-2) M_{\kappa} \Vert f_p^{\spQ_\fracparam}-f_p \Vert_{L_{\infty}([-\pi,\pi])}
                \leq M_{\kappa} C_{\infty} \frac{n+p-2}{n^2} \leq (p-1) M_{\kappa} C_{\infty},
\end{align*}
we can verify every relation of the chain \eqref{dlike_chain} as follows: $ \frac{1}{n^2} A_{n,p}^{\spQ_\param} \sim_{\lambda} Z_n $ descends from \eqref{eflinf_nested}-\eqref{dlike_prechain} and Theorem \ref{th:poly01}, $ Z_n \sim_{\lambda} X_n $ is a consequence of \eqref{c_eq:L1convergence}, and finally $ X_n \sim_{\lambda} \kappa \otimes f_p $ is a known result from the polynomial case (see proof of \cite[Theorem 4.2]{our-MATHCOMP}).
\end{proof}



\begin{remark} \label{rem:poly44_nested}
In the case of constant coefficients, the symbol derived for the normalized stiffness matrices in the collocation formulation with GB-splines of odd degree $2q+1$ is the same as for the normalized matrices in the Galerkin formulation with GB-splines of degree $q$.
\end{remark}

\begin{remark} \label{rem:poly4x_nested} 
Remarks \ref{rem:poly45}-\ref{rem:poly47} also hold in the nested case, provided that $f_p^{\spQ_\param}$ is replaced by $f_p$, and $\paramgen=\param$ instead of $\paramgen=n\param$ in the matrices appearing in the statement of Remark \ref{rem:poly47}.
\end{remark}

\subsubsection{Use of a geometry map within this setting} \label{geo_nested}
\begin{theorem} \label{th:poly48_nested}
Let $ p \geq 2 $. Let $ G : [0,1] \rightarrow [0,1] $ such that $ G \in C^1([0,1]) $, $ 0 < G'(\hat{x}) $ for all $ \hat{x} \in [0,1] $ and $G''$ is bounded. Then, the sequence of normalized collocation matrices $ \{ \frac{1}{n^2} A_{n,p}^{\spQ_\param} \} $ is distributed like the function $ \frac{\kappa(G)}{(G')^2} \otimes f_p $. 
\end{theorem}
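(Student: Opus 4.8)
The plan is to mirror the proof of Theorem~\ref{th:poly48}, replacing its reliance on Theorem~\ref{th:poly42} by the nested analogue, Theorem~\ref{th:poly42_nested}. First I would recall the explicit form of $A_{n,p}^{\spQ_\param}$ in the presence of the geometry map $G$, written out at the beginning of Section~\ref{geo_nonnested}. Comparing it with the geometry-free form of $A_{n,p}^{\spQ_\param}$, one sees that the two matrices are structurally identical: the entries are assembled from the same basis functions $N_{j+1,p}^{\spQ_\param}$ and their derivatives evaluated at the same Greville abscissae, the only difference being that the coefficient $\kappa$ is now replaced by $\tilde{\kappa} := \frac{\kappa(G)}{(G')^2}$, the coefficient $\beta$ by $\frac{\kappa(G)\,G''}{(G')^3}+\frac{\beta(G)}{G'}$, and $\gamma$ by $\gamma(G)$, each sampled at the Greville abscissae.

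Second, I would check that these transformed coefficients satisfy the hypotheses needed to invoke Theorem~\ref{th:poly42_nested}. Since $G\in C^1([0,1])$ with $G'>0$ on the compact interval $[0,1]$, the derivative $G'$ is bounded away from zero, so $1/G'$ and $1/(G')^2$ are continuous and bounded. Consequently the new diffusion coefficient $\tilde{\kappa}=\kappa(G)/(G')^2$ is continuous on $[0,1]$, which is exactly what the modulus-of-continuity estimate controlling the term $L_{n,p}^{\spQ_\param}$ in the proof of Theorem~\ref{th:poly42_nested} requires. The boundedness of $G''$, combined with the boundedness of $\kappa$ and $\beta$ on the compact interval, guarantees that the new advection coefficient is bounded, and $\gamma(G)$ is bounded as well; these enter only through the lower-order block $E_{n,p}^{\spQ_\param}$ and therefore do not affect the limiting distribution.

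Third, I would apply Theorem~\ref{th:poly42_nested} verbatim, with $\kappa$ replaced throughout by $\tilde{\kappa}=\kappa(G)/(G')^2$. The argument there decomposes $\frac{1}{n^2}A_{n,p}^{\spQ_\param}$ into the Hadamard product $\widetilde{D}_{n+p-2}(\tilde{\kappa})\circ T_{n+p-2}(f_p^{\spQ_\fracparam})$ plus the correction terms $E_{n,p}^{\spQ_\param}$, $F_{n,p}^{\spQ_\param}$, $L_{n,p}^{\spQ_\param}$, whose contributions are negligible in the sense required by Theorem~\ref{th:poly01}, and then replaces $f_p^{\spQ_\fracparam}$ by $f_p$ using the $L_1$-convergence supplied by Lemma~\ref{c_lem:L1convergence}. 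Exactly the same chain of reasoning yields $\{\frac{1}{n^2}A_{n,p}^{\spQ_\param}\}\sim_{\lambda} \tilde{\kappa}\otimes f_p = \frac{\kappa(G)}{(G')^2}\otimes f_p$, which is the claim.

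There is no genuine obstacle here: the entire content is the change-of-variables identification of the two forms of $A_{n,p}^{\spQ_\param}$ together with the single substitution $\kappa\mapsto\kappa(G)/(G')^2$. The only point requiring a modicum of care is verifying that $\tilde{\kappa}$ inherits the continuity of $\kappa$ under the stated hypotheses on $G$, so that the modulus-of-continuity bound on $L_{n,p}^{\spQ_\param}$ still closes; everything else is an immediate transcription of the geometry-free nested proof.
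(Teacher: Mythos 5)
Your proposal is correct and follows essentially the same route as the paper, which likewise reduces the statement to Theorem~\ref{th:poly42_nested} by comparing the two forms of $A_{n,p}^{\spQ_\param}$ and substituting $\kappa \mapsto \kappa(G)/(G')^2$, noting that $G$ affects only the coefficients and not the symbol $f_p$ (so the quadratic convergence of $f_p^{\spQ_\fracparam}$ to $f_p$ from Lemma~\ref{c_lem:L1convergence} is undisturbed). Your additional verification that $\tilde\kappa$ inherits continuity from $\kappa$ and $G$ is a detail the paper leaves implicit, but it is the right point to check.
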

\begin{proof}
It reads in the same way as Theorem \ref{th:poly48} because of the influence of $G$ only on $\kappa$ and not on $f_p$, thus avoiding to interfere with the quadratic convergence of $ f_{p}^{\spQ_\fracparam} $ to $f_p$.
\end{proof}
\begin{remark} \label{rem:poly49_nested}
The geometry map $G$ can be given in any representation: the GB-spline form is prescribed by the paradigm of isogeometric analysis, but the theorem holds for a larger set of choices for it.
\end{remark}

\section{The multivariate setting} \label{sec:colmD}

The analysis carried out for the spectral results in Section \ref{sec:col1D} can be extended to the multi-dimensional setting, thanks to the tensor-product structure of the GB-splines. In this case, the linear elliptic differential problem \eqref{c_eq:polycolloc401} reads as:
\begin{equation} \left\{ \begin{aligned} \label{c_eq:polycolloc101}
& -\nabla \cdot K \nabla u + \bphi \cdot \nabla u + \gamma u = \hbox{f}, \qquad \hbox{in } \Omega, \\
& u = 0 \qquad \hbox{on } \partial\Omega,
\end{aligned} \right. 
\end{equation}
where $\Omega$ is a bounded open domain in $\RR^d$, $ K : \hbox{cl}(\Omega) \rightarrow \RR^{d \times d} $ is a symmetric positive definite (SPD) matrix of functions in $ C^1(\Omega) \cap C(\hbox{cl}(\Omega)) $, $ \bphi : \hbox{cl}(\Omega) \rightarrow \RR^d $ is a vector of functions in $ C(\hbox{cl}(\Omega)) $, $ \gamma, \hbox{f} \in C(\hbox{cl}(\Omega)) $ and $ \gamma \geq 0 $. \\
The problem \eqref{c_eq:polycolloc101} can be reformulated as follows:
\begin{equation} \left\{ \begin{aligned} \label{c_eq:polycolloc201}
& -\ve (K \circ Hu) \ve^T + \bbeta \cdot \nabla u + \gamma u = \hbox{f}, \qquad \hbox{in } \Omega, \\
& u = 0 \qquad \hbox{on } \partial\Omega,
\end{aligned} \right. 
\end{equation}
where $ \ve := [1 \cdots 1] $, $Hu$ denotes the Hessian of $u$, that is
$$ (Hu)_{i,j} := \frac{\partial^2 u}{\partial x_i \partial x_j}, $$
and $\circ$ denotes the componentwise Hadamard product (see \cite{bhatia97}); moreover, $\bbeta$ collects the coefficients of the first order derivatives in \eqref{c_eq:polycolloc101}, which are
$$ \beta_j := \alpha_j - \sum_{i=1}^d \frac{\partial \kappa_{i,j}}{\partial x_i}, $$
with $\kappa_{i,j}$ the entries of the matrix $ K := [\kappa_{i,j}]_{i,j=1}^d $. \\
The set of collocation points \eqref{c_eq:polycolloc208} also is to be considered multidimensional, and so are the Greville abscissae \eqref{c_eq:polycolloc406}. \\
The discretization aims at obtaining a system of the form $ A\bfu=\bff $, as in \eqref{collocation_system}. In general, the multivariate form of $A$ is given by
$$ [-\ve(K(\btau_i) \circ H \varphi_j (\btau_i)) \ve^T + \bbeta(\btau_i) \cdot \nabla \varphi_j(\btau_i)
   + \gamma(\btau_i) \varphi_j(\btau_i)]_{i,j=1}^N \in \RR^{N \times N}. $$
%
%
For a compact and clean description of the results, throughout this section, we use the multi-index notation, see \cite[Section~6]{Ty96} (or \cite[Section~2.1]{Full-Galerkin}).
A multi-index $\mm\in\ZZ^d$ is  a (row) vector in $\ZZ^d$ and its components are denoted by $m_1,\ldots,m_d$.
We indicate by $\mathbf0,\,\mathbf1,\,\mathbf2$ the vectors consisting of all zeros, all ones, all twos, respectively.
We set $N(\mm):=\prod_{i=1}^dm_i$ and $\jj\kk:=(j_1k_1,\ldots,j_dk_d)$. 
The inequality $\jj\le\kk$ means that $j_i\le k_i$ for $i=1,\ldots,d$ and the multi-index range $\jj,\ldots,\kk$ is the set $\{\iii\in\ZZ^d:\jj\le\iii\le\kk\}$. We assume for this set the standard lexicographic ordering:
\begin{equation}\label{ordering}
\left[\ \ldots\ \left[\ \left[\ (i_1,\ldots,i_d)\ \right]_{i_d=j_d,\ldots,k_d}\ \right]_{i_{d-1}=j_{d-1},\ldots,k_{d-1}}\ \ldots\ \right]_{i_1=j_1,\ldots,k_1}.
\end{equation}
We write $\iii=\jj,\ldots,\kk$ to denote that the
multi-index $\iii$ varies in the multi-index range $\jj,\ldots,\kk$ taking all the values from $\jj$ to $\kk$ following the ordering in \eqref{ordering}. 
For example, if $\mm\in\NN^d$ and $\bfx=[x_\iii]_{\iii=\mathbf1}^\mm$, then $\bfx$ is a vector of length $N(\mm)$ whose components $x_\iii,\ \iii=\mathbf1,\ldots,\mm$, are ordered according to \eqref{ordering}: the first component is $x_{\mathbf1}=x_{(1,\ldots,1,1)}$, the second component is $x_{(1,\ldots,1,2)}$, and so on. \\
%
%
Let also $\pp:=(p_1,\ldots,p_d)$ and $\nn:=(n_1,\ldots,n_d)$ be multi-indices such that $p_i,n_i\geq1$, $i=1,\ldots,d$. 
Let $\U:=(U_1,\ldots,U_d)$ and $\V:=(V_1,\ldots,V_d)$ be vectors of functions such that for $i=1,\ldots,d$ the pair of functions $ \{ U_i^{(p-1)}, V_i^{(p-1)} \} $  is a Tchebycheff system in $ [t_{i,j}, t_{i,j+1}] $, $j=p_i+1,\ldots,p_i+n_i$, where
$$ \{t_{i,1},\ldots,t_{i,n_i+2p_i+1}\}:=
\biggl\{\underbrace{0,\ldots,0}_{p_i+1},\frac{1}{n_i},\frac{2}{n_i},\ldots,
\frac{n_i-1}{n_i},\underbrace{1,\ldots,1}_{p_i+1}\biggr\}. $$
The tensor-product GB-splines $N_{\iii,\pp}^{\U,\V}:[0,1]^d\to\RR$ are defined by
\begin{equation} \label{tensor-product-B-splines}
N_{\iii,\pp}^{\U,\V} := N_{i_1,p_1}^{U_1,V_1} \otimes \cdots \otimes N_{i_d,p_d}^{U_d,V_d},
\quad \iii = \mathbf2, \ldots, \nn+\pp-\mathbf1. \\
\end{equation}
The use of the Greville abscissae as collocation points $\btau_i$, and of the tensor-product GB-splines as in \eqref{tensor-product-B-splines} as functions $\varphi_j$, $j=1,\ldots,N$ (having $N=N(\nn+\pp-\mathbf2)$), with the standard lexicographical ordering\footnote{A different ordering has been used in \cite{our-MATHCOMP}.} of \eqref{ordering}, allows us to obtain an expression which can be seen as the multidimensional version of \eqref{c_eq:polycolloc409}, in the case modified by a generic geometry map. \\
We can denote this matrix as $ A := A_{\nn,\pp}^{\U,\V} $, which becomes $ A_{\nn,\pp}^{\spQQ_\bmu} $ when hyperbolic or trigonometric GB-splines with parameters $\bmu:=(\mu_1, \ldots, \mu_d)$ are considered; note that different section spaces can be used in different directions. \\
In a fashion which is almost completely similar to the study of \cite[Section 6]{galerkin_gbsp}, we state that $\bparam:=(\param_1, \ldots, \param_d)$ is a suitable sequence of real, fixed parameters not depending on $n$, and that we assume $n_j=\nu_j n\in\NN$ for each $j=1,\ldots,d$, or $\nn=\bnu n$ for a fixed vector $\bnu:=(\nu_1,\ldots,\nu_d)$; note also that $\nn\bparam:=(n_1 \param_1, \ldots, n_d \param_d)$.
In addition, let it be $ {\bf G} : [0,1]^d \rightarrow \hbox{cl}(\Omega) $ the multidimensional geometry map (with $ {\bf G} \in C^1([0,1]^d) $, $K$ the coefficient matrix of our problem \eqref{c_eq:polycolloc201} (or equivalently \eqref{c_eq:polycolloc101}), and $J$ the Jacobian matrix of $ {\bf G} $. With all these ingredients, we can state the two key spectral theorems.
\begin{theorem} \label{c_distro_nested}
The sequence of normalized matrices $\{n^{-2} A_{\nn,\pp}^{\spQQ_\bparam}\}_n$ is distributed, in the sense of the eigenvalues, like the function
\begin{equation} \label{c_symbol_mD_nested}
\bnu (J^{-1} K({\bf G}) J^{-T} \circ H_{p_1,\ldots,p_d}) \bnu^T,
\end{equation}
where
$$ H_{p_1,\ldots,p_d} := \begin{bmatrix}
f_{p_1} \otimes h_{p_2} \otimes \cdots \otimes h_{p_d} & g_{p_1} \otimes g_{p_2} \otimes \cdots \otimes h_{p_d} &
\cdots & g_{p_1} \otimes h_{p_2} \otimes \cdots \otimes g_{p_d} \\
g_{p_1} \otimes g_{p_2} \otimes \cdots \otimes h_{p_d} & h_{p_1} \otimes f_{p_2} \otimes \cdots \otimes h_{p_d} &
\cdots & h_{p_1} \otimes g_{p_2} \otimes \cdots \otimes g_{p_d} \\
\cdots & \cdots & \cdots & \cdots \\
g_{p_1} \otimes h_{p_2} \otimes \cdots \otimes g_{p_d} & h_{p_1} \otimes g_{p_2} \otimes \cdots \otimes g_{p_d} &
\cdots & h_{p_1} \otimes h_{p_2} \otimes \cdots \otimes f_{p_d}
\end{bmatrix}. $$
\end{theorem}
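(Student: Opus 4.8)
The plan is to follow the blueprint of the one-dimensional nested result (Theorem~\ref{th:poly42_nested}) together with the geometry reduction of Theorem~\ref{th:poly48_nested}, lifting every scalar Toeplitz object to a multilevel one through the tensor-product identity of Lemma~\ref{lem:poly02}. First I would write $A_{\nn,\pp}^{\spQQ_\bparam}$ explicitly from the reformulated problem~\eqref{c_eq:polycolloc201}, pushing the geometry map through the Hessian by the chain rule. To leading (second-derivative) order the physical Hessian transforms by $J^{-1}(\cdot)J^{-T}$, so the contraction $-\ve(K({\bf G})\circ Hu)\ve^T$ becomes $-\sum_{a,b}(J^{-1}K({\bf G})J^{-T})_{ab}\,\partial^2 u/\partial\hat{x}_a\partial\hat{x}_b$ in the parametric variables; the discarded pieces, which carry second derivatives of ${\bf G}^{-1}$, are only first-order and will be absorbed into the lower-order block below (this is where the regularity ``$J$ invertible and ${\bf G}''$ bounded'' enters).

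Next I would decompose $n^{-2}A_{\nn,\pp}^{\spQQ_\bparam}$ as a principal Hermitian diffusion part plus a remainder of trace norm $o(N(\nn+\pp-\mathbf2))$, mirroring the four error sources in the proof of Theorem~\ref{th:poly42}. The advection block carries a factor $1/n$ and the reaction block $1/n^2$, both controlled by the (coordinatewise, hence tensorizing) norm bounds of Lemma~\ref{lem:poly41_nested}; the ${\bf G}''$-terms are advection-like and also carry $1/n$; the difference between sampling the geometry-modified diffusion coefficient at the Greville abscissae and at the ledge point is bounded by the modulus of continuity $\omega(\,\cdot\,,C/n)\to0$; and the low-rank boundary corrections $R,Q,S$ of \eqref{c_eq:polycolloc424}--\eqref{c_eq:polycolloc426} have rank $O(1)$ per direction, hence rank $o(N(\nn+\pp-\mathbf2))$ after tensorization.

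For the principal part I would apply Lemma~\ref{lem:poly02} to realize each diagonal Hessian component ($a=b$) as the multilevel Toeplitz matrix generated by $f_{p_a}^{\spQ_{\param_a/n_a}}\otimes\bigotimes_{j\neq a}h_{p_j}^{\spQ_{\param_j/n_j}}$ and each mixed component ($a\neq b$) as the one generated by $g_{p_a}^{\spQ_{\param_a/n_a}}\otimes g_{p_b}^{\spQ_{\param_b/n_b}}\otimes\bigotimes_{j\neq a,b}h_{p_j}^{\spQ_{\param_j/n_j}}$ (up to the sign fixed by the $\ii$ in the definition of $g_p$); the two real skew-symmetric first-derivative factors render each mixed block real symmetric, so the whole principal part is Hermitian. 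The scaling $\nn=\bnu n$ turns the $n^{-2}$ normalization of the $(a,b)$ component into the weight $\nu_a\nu_b$, and summing against $(J^{-1}K({\bf G})J^{-T})_{ab}$ assembles exactly the quadratic form $\bnu(J^{-1}K({\bf G})J^{-T}\circ H_{p_1,\ldots,p_d})\bnu^T$, with symbols still carrying the running parameters $\param_j/n_j$. To replace these by the polynomial limits $f_{p_j},g_{p_j},h_{p_j}$ I would invoke the multidimensional analogue of Lemma~\ref{c_lem:L1convergence}: since $\param_j/n_j\to0$ at rate $n^{-1}$, each factor converges in $L_1([-\pi,\pi])$ at rate $n^{-2}$, and an $L_1$ perturbation of one factor of a tensor symbol is an $L_1$-small perturbation of the whole multilevel symbol, so the induced trace-norm defect is again $o(N(\nn+\pp-\mathbf2))$; by the triangle inequality over the $d$ factors this reduces to the estimate already established in Lemma~\ref{c_lem:L1convergence}.

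Finally I would apply Theorem~\ref{th:poly01} with $X_n$ the Hermitian principal part built from the polynomial symbols and $Y_n$ the accumulated remainder, obtaining $\{n^{-2}A_{\nn,\pp}^{\spQQ_\bparam}\}\sim_\lambda\bnu(J^{-1}K({\bf G})J^{-T}\circ H_{p_1,\ldots,p_d})\bnu^T$. The main obstacle I anticipate is not any single estimate, since each tensorizes from the one-dimensional work, but confirming that this Hadamard-product quadratic form is genuinely the eigenvalue symbol of $X_n$: one must check that the multilevel diagonal-sampling Hadamard matrix $\widetilde D(\,\cdot\,)\circ T(\,\cdot\,)$ used for each entry is distributed like the entrywise product of coefficient and tensor symbol (the multivariate form of \cite[Corollary 2.6]{our-MATHCOMP}), and that summing these Hermitian pieces with the SPD weights coming from $J^{-1}K({\bf G})J^{-T}$ preserves a single joint eigenvalue distribution. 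This bookkeeping, rather than any delicate analysis, is where the care lies.
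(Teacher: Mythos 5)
The paper never actually proves Theorem \ref{c_distro_nested}: the statement is given without proof, the text before it deferring to the analogous multivariate Galerkin analysis in \cite[Section 6]{galerkin_gbsp} and to the 1D machinery of Section \ref{sec:col1D}. Your proposal therefore has to stand on its own, and the route you choose --- transform the Hessian by $J^{-1}(\cdot)J^{-T}$, split off advection/reaction/boundary/continuity remainders exactly as in Theorem \ref{th:poly42}, tensorize via Lemma \ref{lem:poly02}, and pass from the parameters $\param_j/n_j$ to the polynomial limits by an $L_1$ estimate --- is the natural one and is consistent with what the cited references do. Your treatment of the mixed blocks (two real skew-symmetric first-derivative factors giving a real symmetric block with symbol $g_{p_a}\otimes g_{p_b}\otimes\cdots$, and the $\nu_a\nu_b$ weights coming from $\nn=\bnu n$) is correct.

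There are, however, two genuine gaps. The first is precisely the point you dismiss as ``bookkeeping'' at the end: Theorem \ref{th:poly01} lets you perturb a \emph{single} Hermitian sequence whose distribution is already known, whereas your Hermitian principal part $X_n$ is a \emph{sum} of $d^2$ sequences of the form $\widetilde{D}(a_{ab})\circ T_{\nn+\pp-\mathbf{2}}(\text{tensor symbol})$, with $a_{ab}$ the entries of $J^{-1}K({\bf G})J^{-T}$. Knowing the eigenvalue distribution of each summand does not give the distribution of the sum --- spectral distributions are not additive --- so the claim $\{X_n\}\sim_\lambda \bnu(J^{-1}K({\bf G})J^{-T}\circ H_{p_1,\ldots,p_d})\bnu^T$ needs the GLT algebra (or the a.c.s.\ apparatus of \cite{our-MATHCOMP}, \cite{Full-Galerkin}), under which each summand is a multilevel GLT sequence and symbols of sums add. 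Without invoking that, your chain stops one step short of the conclusion; none of the tools stated in Section \ref{sec:prel_spec} suffices. The second gap is smaller: Lemma \ref{c_lem:L1convergence} gives the $O(n^{-2})$ $L_1$ rate only for $f_p^{\spQ_{\param/n}}-f_p$, while the multivariate symbol also contains $g_{p_j}$ and $h_{p_j}$ factors, so you must also establish the analogous estimates for $g_p^{\spQ_{\param/n}}-g_p$ and $h_p^{\spQ_{\param/n}}-h_p$ (the same Taylor-expansion argument should work, but the paper never states them), and you should cite explicitly the trace-norm bound $\Vert T_{\mm}(\varphi)\Vert_1\leq \frac{N(\mm)}{(2\pi)^d}\Vert\varphi\Vert_{L_1([-\pi,\pi]^d)}$ that converts an $L_1$-small symbol perturbation into an $o(N(\nn+\pp-\mathbf{2}))$ trace-norm perturbation.
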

\begin{theorem} \label{c_distro_nonnested}
The sequence of normalized matrices $\{n^{-2} A_{\nn,\pp}^{\spQQ_{\nn\bparam}} \}_n$ is distributed, in the sense of the eigenvalues, like the function
\begin{equation} \label{c_symbol_mD_nonnested}
\bnu (J^{-1} K({\bf G}) J^{-T} \circ H_{p_1,\ldots,p_d}^{\spQQ_\bparam}) \bnu^T,
\end{equation}
where
$$ \hspace{-0.5cm} H_{p_1,\ldots,p_d}^{\spQQ_\bparam} := \begin{bmatrix}
f_{p_1}^{\spQ_\param} \otimes h_{p_2}^{\spQ_\param} \otimes \cdots \otimes h_{p_d}^{\spQ_\param} &
g_{p_1}^{\spQ_\param} \otimes g_{p_2}^{\spQ_\param} \otimes \cdots \otimes h_{p_d}^{\spQ_\param} & \cdots & 
g_{p_1}^{\spQ_\param} \otimes h_{p_2}^{\spQ_\param} \otimes \cdots \otimes g_{p_d}^{\spQ_\param} \\
g_{p_1}^{\spQ_\param} \otimes g_{p_2}^{\spQ_\param} \otimes \cdots \otimes h_{p_d}^{\spQ_\param} & 
h_{p_1}^{\spQ_\param} \otimes f_{p_2}^{\spQ_\param} \otimes \cdots \otimes h_{p_d}^{\spQ_\param} & \cdots &
h_{p_1}^{\spQ_\param} \otimes g_{p_2}^{\spQ_\param} \otimes \cdots \otimes g_{p_d}^{\spQ_\param} \\
\cdots & \cdots & \cdots & \cdots \\
g_{p_1}^{\spQ_\param} \otimes h_{p_2}^{\spQ_\param} \otimes \cdots \otimes g_{p_d}^{\spQ_\param} & 
h_{p_1}^{\spQ_\param} \otimes g_{p_2}^{\spQ_\param} \otimes \cdots \otimes g_{p_d}^{\spQ_\param} & \cdots &
h_{p_1}^{\spQ_\param} \otimes h_{p_2}^{\spQ_\param} \otimes \cdots \otimes f_{p_d}^{\spQ_\param}
\end{bmatrix}. $$
\end{theorem}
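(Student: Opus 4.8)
The plan is to reduce the $d$-dimensional statement to the one-dimensional symbol identifications already established, combined with the tensor-product Toeplitz identity of Lemma~\ref{lem:poly02} and the perturbation machinery of Theorem~\ref{th:poly01}, following closely the multidimensional Galerkin treatment of \cite[Section~6]{galerkin_gbsp}. Since we are in the non-nested regime $\bmu=\nn\bparam$, the relevant cardinal GB-splines carry the \emph{fixed} phases $\mu_j/n_j=\param_j$, so the one-dimensional symbols $f_{p_j}^{\spQ_\param}$, $g_{p_j}^{\spQ_\param}$, $h_{p_j}^{\spQ_\param}$ do not depend on $n$; this is precisely what makes the fixed-phase functions appear in~\eqref{c_symbol_mD_nonnested} and what spares us the $L_1$-convergence argument of Lemma~\ref{c_lem:L1convergence} needed in the nested case.

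First I would write $n^{-2}A_{\nn,\pp}^{\spQQ_{\nn\bparam}}$, after the geometry pullback, as the sum of its principal (second-order) part and the lower-order advection/reaction contributions. Using the tensor-product structure~\eqref{tensor-product-B-splines}, each mixed partial $\partial_{x_r}\partial_{x_s}$ of a tensor-product GB-spline factors into one-dimensional derivatives, so the central block of every term is a tensor product of one-dimensional Toeplitz matrices. By the one-dimensional identifications, a second derivative in direction $r$ contributes the factor $n_r^2\,T(f_{p_r}^{\spQ_\param})$, a first derivative contributes $\ii\,n_r\,T(g_{p_r}^{\spQ_\param})$, and an undifferentiated factor contributes $T(h_{p_r}^{\spQ_\param})$. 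Invoking Lemma~\ref{lem:poly02} repeatedly collapses each such tensor product into a single multilevel Toeplitz matrix generated by the tensor product of the corresponding symbols; the two factors of $\ii$ arising in every cross term $r\neq s$ combine to $-1$, which together with the sign of the operator reproduces exactly the entries $g_{p_r}^{\spQ_\param}\otimes g_{p_s}^{\spQ_\param}\otimes(\cdots)$ and $f_{p_r}^{\spQ_\param}\otimes(\cdots)$ of $H_{p_1,\ldots,p_d}^{\spQQ_\bparam}$. After normalization by $n^{-2}$ with $n_j=\nu_j n$, the surviving scalars $\nu_r\nu_s$ assemble, together with the ledge samplings of the pulled-back diffusion coefficient $J^{-1}K({\bf G})J^{-T}$, into the Hadamard-and-contraction form~\eqref{c_symbol_mD_nonnested}.

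The remaining work is to show that the deviation of the full matrix from this principal multilevel-Toeplitz part is negligible for the eigenvalue distribution, exactly as in the proof of Theorem~\ref{th:poly42}. I would take $X_n$ to be the Hermitian multilevel matrix whose distribution is the symbol~\eqref{c_symbol_mD_nonnested}, obtained by weighting the one-dimensional Toeplitz factors of the principal part by the ledge samplings $\widetilde{D}(\cdot)$ of the entries of $J^{-1}K({\bf G})J^{-T}$; its distribution follows from the tensor-product/multilevel Toeplitz results of \cite[Section~6]{galerkin_gbsp}. The correction $Y_n$ then gathers three pieces: (i) the first- and zeroth-order terms, which by the multidimensional analogue of Lemma~\ref{lem:poly41} have uniformly bounded spectral norm while carrying an extra factor $n^{-1}$ or $n^{-2}$, hence trace norm $o(N(\nn+\pp-\mathbf{2}))$; (ii) the difference between the Greville-sampled diagonal of the diffusion coefficient and its ledge version, controlled through the modulus of continuity $\omega(\cdot,C/n)\to0$ as in~\eqref{c_eq:poly429}; and (iii) the low-rank boundary correction from the non-central rows, whose rank is of the order of the boundary degrees of freedom, $O(N(\nn+\pp-\mathbf{2})/n)=o(N(\nn+\pp-\mathbf{2}))$. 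With these estimates, Theorem~\ref{th:poly01} applies and yields $\{n^{-2}A_{\nn,\pp}^{\spQQ_{\nn\bparam}}\}\sim_\lambda \bnu(J^{-1}K({\bf G})J^{-T}\circ H_{p_1,\ldots,p_d}^{\spQQ_\bparam})\bnu^T$.

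The main obstacle I anticipate is bookkeeping rather than analysis: correctly tracking the block $(r,s)$ structure of $H_{p_1,\ldots,p_d}^{\spQQ_\bparam}$ through the geometry pullback, so that the Jacobian factors genuinely conjugate $K$ into $J^{-1}K({\bf G})J^{-T}$ while the first-order terms generated by $\mathbf G''$ are absorbed into the negligible $Y_n$, and verifying that the signs produced by the paired $\ii g_{p}^{\spQ_\param}$ factors match the off-diagonal entries of $H_{p_1,\ldots,p_d}^{\spQQ_\bparam}$ as written. Once the one-dimensional symbol identifications and Lemma~\ref{lem:poly02} are in place, the multidimensional distribution is essentially a transcription of the Galerkin argument of \cite[Section~6]{galerkin_gbsp}, with the fixed-phase symbols $f_{p}^{\spQ_\param},g_{p}^{\spQ_\param},h_{p}^{\spQ_\param}$ in place of the polynomial ones.
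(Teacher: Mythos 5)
The paper offers no proof of Theorem~\ref{c_distro_nonnested} at all: both multidimensional theorems are simply stated after the remark that the setting is ``almost completely similar to the study of \cite[Section 6]{galerkin_gbsp}'', so there is no in-paper argument to compare yours against line by line. Judged on its own, your outline is the natural multidimensional transcription of the proof of Theorem~\ref{th:poly42}, and it is consistent with everything the paper does establish: the fixed-phase observation $\mu_j/n_j=\param_j$ correctly explains why $f_{p}^{\spQ_\param},g_{p}^{\spQ_\param},h_{p}^{\spQ_\param}$ (and not their polynomial limits) appear and why Lemma~\ref{c_lem:L1convergence} is not needed; the sign bookkeeping ($\ii T(g)$ per first derivative, two factors of $\ii$ giving $-1$ against the minus sign of the operator) matches the conventions in \eqref{c_eq:nnknp}--\eqref{c_eq:mnp}; and the three-part splitting of $Y_n$ (lower-order terms via the multidimensional analogue of Lemma~\ref{lem:poly41}, the ledge-versus-Greville sampling mismatch via the modulus of continuity, and the $O(N/n)$-rank boundary correction) mirrors the 1D proof faithfully. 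The one place where your argument needs more than the tools you name is the identification of the distribution of the principal part itself: $X_n$ is a \emph{sum} over $(r,s)$ of ledge-weighted multilevel Toeplitz blocks, and neither Lemma~\ref{lem:poly02} nor Theorem~\ref{th:poly01} tells you that such a sum is distributed like the sum of the individual symbols (Theorem~\ref{th:poly01} only handles a single Hermitian sequence perturbed by a small term). That step genuinely requires the GLT-algebra (or locally Toeplitz) results from \cite{our-MATHCOMP} and \cite[Section 6]{galerkin_gbsp}, which you do cite but should invoke explicitly for the summation over $(r,s)$ rather than only for each block; with that made precise, the proof is complete and is, as far as one can tell, exactly the argument the author intends.
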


\section{Conclusions} \label{sec:colconcl}

We constructed the matrices which follow from the use of an isogeometric collocation method based on uniform tensor-product GB-splines to approximate an elliptic PDE with variable coefficients and a generic geometry map, in a fashion which is similar to the one presented for isogeometric Galerkin methods in \cite{galerkin_gbsp}, but not limiting only to the case of constant coefficients (with the Laplacian operator for the principal terms) and a trivial geometry map.

Their spectral properties have been analyzed, by focusing on the spectral symbol $f$, which in accordance with previous results in Finite Differences and Finite Elements context, as well in IgA collocation with polynomial B-splines \cite{collocbsp04,Serra03,Serra06,our-MATHCOMP}, resulted to have a canonical structure incorporating the approximation technique, the geometry $ {\bf G} $, and the coefficients of the principal terms of the PDE $K$. 


The fact that (in the 1D case for simplicity) there is a monotonic and exponential convergence to zero of $f$ with respect to the degree $p$ at the points $ \theta = \pm \pi $, which is responsible for the slowdown, with respect to $p$, of standard iterative methods, have been addressed for the polynomial case in \cite{DGMSS14c}.

In the generalized case, some points remain still open. For example, the expression \eqref{c_symbol_mD_nested}, which appears in the study with polynomial B-splines and trigonometric or hyperbolic GB-splines in the nested case, is known to be positive, which is pivotal in order to devise truly robust numerical methods, but little is still known concerning \eqref{c_symbol_mD_nonnested}, which appears while trigonometric or hyperbolic GB-splines are used with the non-nested framework. A better understanding of these features would be very important in order to improve the possibility of numerical solving the resulting linear systems in a safe way.

\end{document}